\documentclass[12pt]{amsart}
\usepackage{latexsym,fancyhdr,amssymb,color,amsmath,amsthm,graphicx,listings,comment,booktabs}
\usepackage{chessboard,forest,adjustbox}
\usepackage{tikz}
\usepackage[absolute,overlay]{textpos}
\usepackage[section]{placeins}
\newtheorem{thm}{Theorem} 
\newtheorem{lemma}{Lemma}

\let\paragraph\subsection
\newcommand{\pos}[1]{\chessboard[setpieces={#1}]}       
\newcommand{\posINIT}{Qa1,Kb1,Pa2,Pb2,qa4,kb4,pa3,pb3}  
\newcommand{\posMa} {Qa1,Kb1,Pb2,Pb3,qa4,kb4,pa3}       
\newcommand{\posMb} {Qa1,Kb1,Pb2,Pb3,qa4,kb4,pa2}       
\newcommand{\posMc} {Qa2,Kb1,Pb2,Pb3,qa4,kb4}           
\newcommand{\posMd} {Qa2,Kb1,Pb2,Pb3,qa3,kb4}           
\newcommand{\posMATE}{Qa3,Kb1,Pb2,Pb3,kb4}              
\newcommand{\posAxbTwo}{Qa1,Kb1,Pb3,qa4,kb4,pb2}        
\newcommand{\posQxbThree}{Qa1,Kb1,Pb2,pa3,qb3,kb4}      
\newcommand{\posKxbThree}{Qa1,Kb1,Pb2,pa3,qa4,kb3}      
\newcommand{\posDraw}{Qa1,Kb1,Pa2,Pa3,qa4,kb4,pb3}      
\setchessboard{maxfield=b4,boardfontsize=8pt,showmover=false,labelleft=false, labelbottom=false}

\title{God numbers for Graphs, Games and Groups}

\author{ Z. Adams, M.Z. Cassim, C. Hou, O. Knill, V. Seco Roopnaraine, M.H. Saleem}

\date{May 15, 2026}
\address{Department of Mathematics \\ Harvard University \\ Cambridge, MA, 02138 }
\subjclass{}

\keywords{Graph, Games, Groups}

\makeindex
\begin{document}
\maketitle

\begin{abstract}
We describe and axiomatize finite solitaire puzzles and zero sum sequential games graph theoretically.
Zermelo's theorem telling that there is a win for one of the players or a draw follows from the definitions. 
The god number is a geometric quantity that quantifies the number of moves necessary to solve 
the puzzle. In the solitaire case, the god number is the minimal distance from the 
initial state $v$ to the solution space $A$. If $v$ and $A$ are not specified, the god number is the graph diameter.
God number computations are related to combinatorial sorting problems and is a NP-complete problem in general 
even when restricted to concrete sliding problems. In the two-player case, the god number is a minimax critical value: 
it minimizes the maximal game event length over the set of all strategies. A strategy is a 
sub-graph of the game graph that contains the initial vertex. 
The definition is done so that a ``mate in k" chess problem has 
god number k. As for examples: in the solitaire case, we look at group games 
like Rubik type problems, transposition games related to sorting, at sliding puzzles 
like the 15 puzzle or rainbow ball, or the tower of Hanoi. For two-player games,
we illustrate the story using examples of small chess games, a small card game or tic-tac-toe type problems.
\end{abstract}

\tableofcontents

\vfill
\pagebreak

\section{In a nutshell}

\paragraph{}
A {\bf solitaire} is a finite simple directed graph $G=(V,E)$ with an initial condition 
$v \in V$ and a {\bf solution set} $A \subset V$, for which the puzzle is considered solved. 
We use the term ``puzzle" as a synonym to ``one-player game" or ``solitaire". 
\index{solitaire}
\index{puzzle}
\index{one-player game}
The game is not continued when the {\bf equilibrium set} $S$ and especially the
{\bf solution set} $A \subset S$ is reached. The {\bf equilibrium set} $S$ 
is defined as the set of vertices in $G$ with only incoming edges.
\index{solution set A}
\index{equilibrium set S}
\index{game graph}
A graph is {\bf simple} if no self loops $v \to v$, nor multiple connections appear. It is possible
to see an edge $(a,b)$ as {\bf undirected}, if both connections $a \to b,b \to a$ are present. 
The {\bf god number of a winnable solitaire} is the minimal path length from $v$ to $A$. 
For undirected graphs, it is the {\bf geodesic graph distance} between $v$ and $A$. 
\index{graph distance}
If a solitaire is non-winnable, it is custom to define the god number to be infinite.
We can still then still look at maximal game events. Also, if  $S$ and so $A$ are empty,
we could look at the {\bf maximal length of a game event}. Finding this is related to the question whether 
the graph is {\bf Hamiltonian}.
\index{Hamiltonian}

\paragraph{}
Here is the graph theoretical axiom system for a two-player game. It will be outlined again in
separate sections and summarized in a slightly more formal way at the end:
a {\bf two-player game} is a {\bf bipartite directed graph} $G=(V \cup W,E)$ with an {\bf initial position} $v$
and a set $A$ of {\bf win positions} in the set of equilibria $S$. The set $V$ consists of all positions of the first player,
while $W$ is the set of positions of the second player. We refer to $G$ as the {\bf game graph}. 
\index{win positions}
\index{two-player game}
\index{game graph}
The first player moves along edges pointing from $V$ to $W$,
the second payer moves on edges pointing from $W$ to $V$. The set $S$ of {\bf end position}
consists of all vertices without outgoing edges. 
\index{end positions} 
The complement of $A$ in $S$ are {\bf draw positions}. These are vertices representing game positions
where the game can not be continued but where none of the players has won. 
\index{draw positions} 
A {\bf game event} is a maximal simple path $\gamma$ in $G$ that starts at $v$. It could be a path of 
length $0$ like if $v \in S$.
\index{game event}
That $\gamma$ is {\bf simple} means that it does not cover any vertex twice.
{\bf Maximal} means that it is not part of a strictly larger game event. 
A game event terminating in $A$ is called {\bf winning}, otherwise it is a {\bf draw game event}: it can
either end up in $S$ or then trap itself, as it needs to be {\bf self-avoiding}.
\index{winning event}
\index{draw event}
\index{self avoiding}
The first player who reaches $A$ is a winner. There are two scenarios for a draw:
the draw event $\gamma$ can either end in $S \setminus A$ or then end in a vertex, where a continuation
is not possible without hitting an already covered position and so violating the simplicity of $\gamma$. 

\begin{figure}[!htpb]
\scalebox{0.25}{\includegraphics{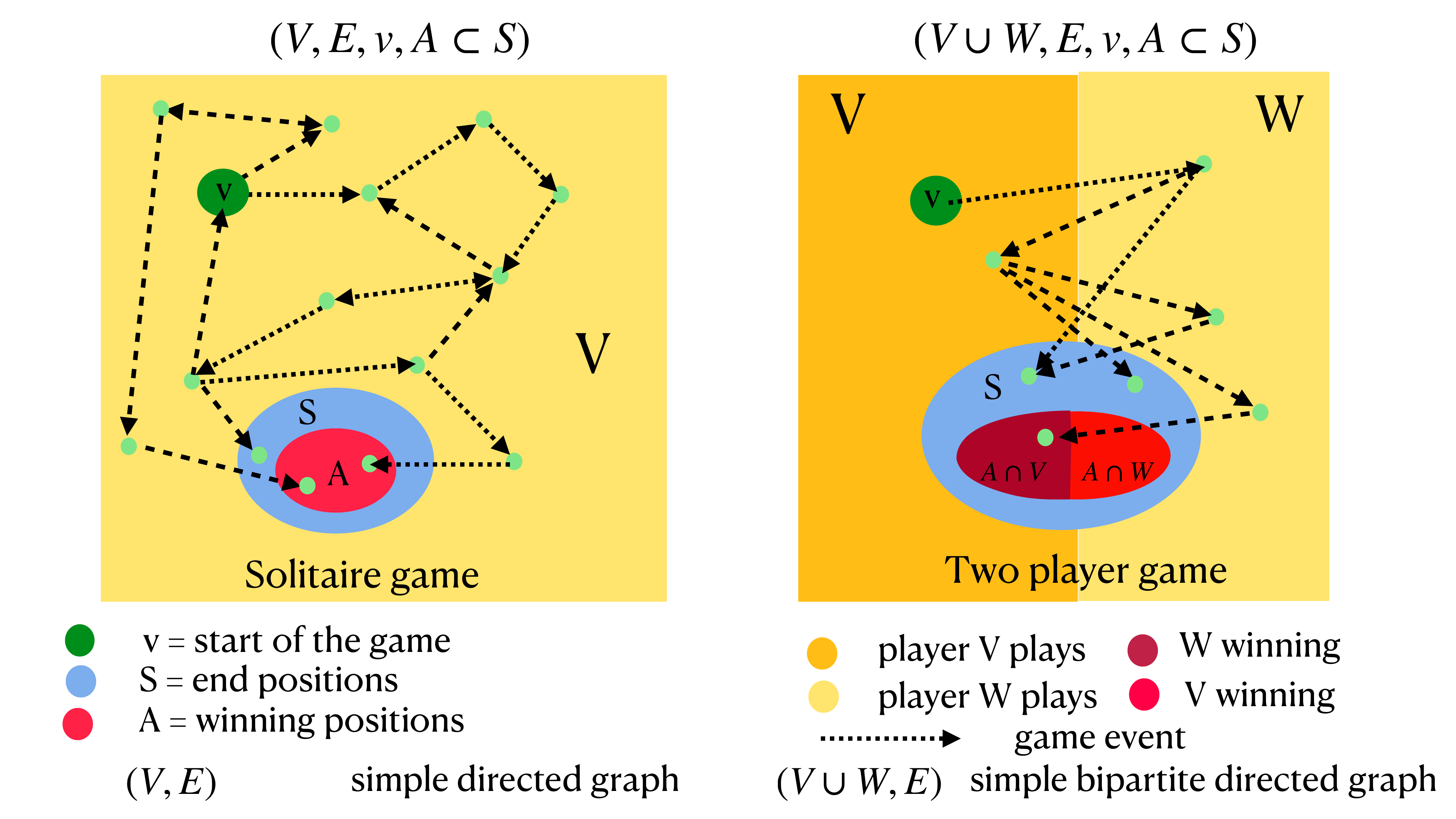}}
\label{game}
\caption{
Both solitaires and two-player games are defined graph theoretically.
In the solitaire case, the puzzle is either solvable or not. 
In the two-player case, there is a {\bf Zermelo trichotomy}:
a game is either a win for V (V can remove
edges leaving V so that every game event lands in $A \cap W$), 
a win for W (W can remove edges leaving W so that every game
event lands in $A \cap V$, or a draw. If both players can remove
edges from their side so that no game event ends in A, we have
a {\bf draw}.
}
\end{figure}
\index{Zermelo trichotomy}
\index{trichotomy of Zermelo}

\paragraph{}
A {\bf strategy} is a sub-graph $H$ of $G$ that contains $v$. \index{strategy} 
\footnote{We discuss this definition more below. Simplicity is intended.}
A {\bf win strategy for V} is a strategy $H$ that does not remove any edges starting from $W$
and which has the property that every game event in $H$ ends in $A \cap W$.  \index{win strategy for V}
A {\bf win strategy for W} is a strategy $H$ that does not remove any edges starting 
from $V$ such that game event in $H$ ends in $A \cap V$.  \index{win strategy for W}
A {\bf draw strategy} is a strategy that is neither a win strategy for $V$, nor a win strategy for $W$. 
\index{draw strategy}
In a draw strategy $H$, both players have limited their move 
options (removed edges) so that the win set $A$ can not be reached in $H$ by neither of the players.

\paragraph{}
In this frame work, {\bf Zermelo's theorem} for a game $(G=(V,W,E),v,A)$ 
logically follows from the definition. Let us prove it nevertheless, 
as traditionally one uses backward induction on the finite game tree \cite{MaschlerSolanZamir}.
\footnote{And because an AI pupil to which we have taught the axioms once did first not believe that it is as simple.}
We also have {\bf game graph}, not a {\bf game tree} so that that argument anyway would 
not work in the graph theoretical frame work that we consider here. 
\footnote{Most games have game graphs, not game trees. There can be various 
ways to reach the same position. Indeed this happens in most games.}

\begin{thm}[Zermelo's theorem]
There is either a win strategy for $V$, or a win strategy for $W$ or a draw strategy. 
\end{thm}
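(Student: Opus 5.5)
The statement follows almost directly from the definitions, because a \emph{draw strategy} is defined negatively: it is any strategy that is neither a win strategy for $V$ nor a win strategy for $W$. So the plan is a case distinction on the set of all strategies, that is, on all subgraphs $H$ of $G$ containing $v$. This set is finite and nonempty, since the full game graph $G$ itself is a strategy.

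First I would pick one concrete strategy, namely $H=G$. I then ask whether $G$ (or any other subgraph) is a win strategy for $V$. If some $H$ is, we are in the first alternative and are done. If not, I ask whether some $H$ is a win strategy for $W$. If so, we are in the second alternative. If neither holds, then every strategy, in particular $G$ itself, is neither a win strategy for $V$ nor for $W$. By definition $G$ is then a draw strategy, which gives the third alternative. The only small point to check is that the set of strategies is nonempty, which holds because $v \in V\cup W$ and $G$ contains $v$.

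The real content, and the step I expect to be the main obstacle, is whether the theorem should be read as a \emph{trichotomy}: exactly one alternative holds, and the draw case means both players can block $A$. For exclusivity I would show that a win strategy $H_V$ for $V$ and a win strategy $H_W$ for $W$ cannot coexist. $H_V$ only removes edges leaving $V$, and $H_W$ only removes edges leaving $W$. Hence the intersection $H_V\cap H_W$ is a strategy that is a subgraph of both. I would build a game event in it greedily from $v$, extending a simple path as long as possible. This event is a maximal simple path in $H_V\cap H_W$. I would then argue it is also maximal in both $H_V$ and $H_W$. At a $V$-vertex the outgoing edges of $H_V\cap H_W$ are those of $H_V$, because $H_W$ keeps all edges leaving $V$. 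At a $W$-vertex they are those of $H_W$, because $H_V$ keeps all edges leaving $W$.

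Given this maximality, the event must end in $A\cap W$, since it is a game event of $H_V$. It must also end in $A\cap V$, since it is a game event of $H_W$. This is impossible because $V\cap W=\emptyset$. The delicate part is precisely this verification that maximality transfers: a path stuck in the intersection is also stuck in each factor, including the self-avoidance traps. I would check it carefully vertex by vertex using the bipartite structure. With exclusivity established, the case analysis above gives the theorem.
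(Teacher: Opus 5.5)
Your overall route matches the paper's. The disjunction is immediate because a draw strategy is defined negatively. The real content is that a winning $V$-strategy $H_V$ and a winning $W$-strategy $H_W$ cannot coexist, which the paper, in its summary of axioms, also argues via $H=H_V\cap H_W$.

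The gap sits exactly at the step you flagged as delicate. A game event $\gamma$ of $H$ is in general \emph{not} maximal in both $H_V$ and $H_W$, and your own vertex-by-vertex observation shows why. If $\gamma$ stops at a $W$-vertex $x$, the continuations available in $H$ are those of $H_W$. But $H_V$ keeps every edge of $E_W$ leaving $x$, including edges that $H_W$ trimmed. Symmetrically, at a $V$-vertex $H$ agrees with $H_V$, while $H_W$ may have more outgoing edges. Here is a concrete instance for arbitrary $V$- and $W$-strategies:
\begin{itemize}
\item take $V=\{v,v'\}$, $W=\{w\}$ and edges $v\to w$, $w\to v'$;
\item let $H_V=G$, and let $H_W$ be $G$ with the edge $w\to v'$ deleted;
\item then the event $(v,w)$ of $H$ is maximal in $H_W$, but it extends to $(v,w,v')$ in $H_V$.
\end{itemize}
So ``a path stuck in the intersection is stuck in each factor'' is false. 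Consequently your conclusion that $\gamma$ ends in both $A\cap W$ and $A\cap V$ is not justified. The paper's summary argument makes the same unjustified leap.

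The repair is to split on which side the terminal vertex $x$ of $\gamma$ lies, and to use only the matching factor in each case.
\begin{itemize}
\item If $x\in V$: every edge of $H_V$ leaving $x$ lies in $E_V\subset E(H_W)$, hence in $H$. So $\gamma$ is maximal in $H_V$ and is a game event of the winning $V$-strategy. It must then end in $A\cap W\subset W$, contradicting $x\in V$.
\item If $x\in W$: the same reasoning with the roles exchanged makes $\gamma$ a game event of $H_W$. Then $x\in A\cap V\subset V$, again a contradiction.
\end{itemize}
This also covers the length-$0$ event $\gamma=(v)$. The contradiction comes from applying $V\cap W=\emptyset$ to the single endpoint $x$, not from $\gamma$ landing in two disjoint target sets. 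With this case split, exclusivity holds and the trichotomy goes through.
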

\begin{proof}
As $G$ is a finite graph and game events are maximal simple paths in $G$,
there are only finitely many game events that are possible. To each event, there is a definite value "win" or "loss"
for either player. There are 4 logical possibilities.  The possibility that is a win strategy for both 
can not occur: indeed, either $V$ can limit the options so that no game event from $W$ to $A$ exists, 
or $W$ can limit their options so that no game event from $V$ to $A$ exists. Or then both can 
construct together a strategy $H$, where $V$ is allowed to trim outgoing edges from $V$ and 
$W$ is allowed to trim edges outgoing from $W$ so that $H$ has no game event ending in $A$. 
\end{proof}
\index{Zermelo's theorem}
\index{proof of Zermelo's theorem}

\paragraph{}
Zermelo's theorem have been described in \cite{Zermelo1913}. It needed
adaptations and corrections like \cite{Kalmar1928}. It is discussed in \cite{SchwalbeWalker}. 
A proof of Zermelo's theorem using induction is in \cite{MaschlerSolanZamir} 
(Theorem 1.5). We will later remark that one could still use trees, but then we would 
have to look at the {\bf self-avoiding walk tree} of the game graph. 
\index{Zermelo's theorem}

\paragraph{}
In general, there is no procedure to find a win strategy in general, except
by exhaustive case study \cite{BeckCombinatorialGames}. Indeed, the task of 
finding the god number is in general a {\bf NP complete} problem:
\index{NP complete} 
the solitaire game in which one has to find a maximal 
complete sub-graph in a graph or the problem to find a Hamiltonian path in 
a graph are fundamental NP complete problems \cite{Karp}. 
A concrete case of a solitaire that is NP complete is the $n \times n$ generalization 
of the 15-puzzle. This class of games are discussed later on.
\index{15 puzzle}
\index{NP complete}

\paragraph{}
The {\bf god number} for a given strategy $H$ is
the maximal length $|\gamma|$ that a game event $\gamma$ in $H$ can have. 
The length $|\gamma|$ of $\gamma$ is assumed to be the number of edges in $\gamma$ 
emanating from $V$. It is the number of play moves of player $V$.
\index{god number for two-player games} 
The length of the path as a ``measure of loss" and is defined in such a way 
that {\it if white moves and mates in n} means that V="White" and that the god number is n. 
For a problem {\it black moves and mates in n} means V="Black" and that the god number is n.
The assumption $v \in V$ assumes that $V$ is the {\bf first player}. The situation where the
``second player" starts, can easily be realized by renaming.
\index{first player}
\index{second player}

\paragraph{}
To place this paper into the given mathematical landscape, we remark that
our approach is elementary, geometric and entirely {\bf graph theoretic}. 
It could be formalized in \cite{LeanOrigin}, as we only use notions from graph theory:
directed graphs, subgraphs and paths. 
Other notions exist of course. There has been in particular other set-ups for
"strategy". The book \cite{MaschlerSolanZamir} for example defines
a ``strategy" a rule which tells in each situation where to go. There are
probabilistic set-ups also, where a strategy is a probability measure on the
possible moves. We mention some history in a separate section and especially comment
that traditional game theory developed in the 1950ies by students of Albert Tucker 
focused on various forms of cooperation, which is necessary when looking at multi-player games.
The difficulty of cooperation and collusion through the formation of coalitions already 
starts with three players. 

\paragraph{}
All graphs are simple (no multiple connections, no loops) and unweighted (no additional properties are
attached to edges or vertices).
The graph data $(V,E,v,A)$ or the bipartite directed graph data $(V \cup W,E,v,A \subset S)$ determine
the game entirely. The scope covers all deterministic sequential zero-sum 2-person games. It is especially
part of {\bf combinatorial games} \cite{BeckCombinatorialGames,LessonsInPlay}. 
There is little relation with ``graphical games" \cite{GraphicalGames}. For solitaire games,
the god number problem is a {\bf combinatorial sorting problem}. Many game graphs are Cayley graphs of 
groups so that it is also part of {\bf combinatorial group theory}. 
Puzzles and games deliver an accessible gateway to mathematics. Together with history as a gateway, 
it is a nice selling point. Recent example books to this would be \cite{Moscovich,Orlin}.
\index{topological sorting problem}
\index{gateway to mathematics}
\index{graphical game}
\index{combinatorial game} 
\index{combinatorial group theory}

\paragraph{}
The setup could naturally be generalized to {\bf $n$-player games}: take a finite simple directed 
{\bf $n$-partite graph} $G=(V=V_0 \cup \cdots \cup V_{n-1},E)$, where edges $(a \to b) \in E$ 
have the property that $a \in V_k$ implies $b \in V_{k+1 \; {\rm mod} n}$. Players play cyclically. 
The {\bf initial position} $v \in V_0$ and the {\bf winning set} $A \subset S$ are defined as before. 
The {\bf winning set} $A$ is assumed to be a subset of $S=\{ x \in V, {\rm outdeg}_G(x)=0 \}$.
A {\bf game event} is a maximal simple path $(v_0, \cdots, v_m)$ in $G$ that starts at $v_0=v$. 
A {\bf $V_i$ strategy} is a sub-graph $H$ of $G$ that contains $v$
such that only edges emanating from $V_i$ are removed. Formally, $v \in V(H), E(G) \setminus E(H) \subset E_i$,
where $E_i = \{ (a,b) \in E, a \in V_i \}$.
A {\bf game event} in $H$ is a maximal simple path in $H$ that starts at $v$. A 
non-empty proper subset of players $I$ is called a proper {\bf coalition}.
\footnote{We leave away ``proper" to avoid a pleonasm: the set of all players is not a coalition
in colloquial language.}
A strategy is called a $V_I$-strategy, if $v \in V_I, E(G) \setminus E(H) \subset \bigcup_{i \in I} E_i$. 
The {\bf length} $|\gamma|$ of a game event $\gamma=(v=v_0,v_1, \dots, v_m)$ is the number of $V_0$ edges in $\gamma$. 
A game event represents a {\bf win for player $V_k$} if the final edge $(v_{m-1},v_m)$ in the path $\gamma$ 
satisfies $v_{m-1} \in V_k$ and $v_m \in A_k = A \cap V_{k+1 \; mod n}$. 
Coalition $I$ is winning if it has a strategy, forcing every game event into 
$A_I =\bigcup_{i \in I} A_i = \bigcup_{i \in I} (A \cap V_{(i+1) \; {\rm mod} \; n})$.
It is {\bf minimally winning} if no proper subset $J \subset I$ is winning. 
An $n$-player game is a {\bf draw}, if no coalition has a winning strategy
For $n$-player games, the Zermelo trichotomy needs to be replaced with a study of {\bf winning coalitions}.
By definition, every game is either a draw or then winnable for at least one coalition.
The set of all strategies is richer because {\bf collusion} is possible: it can happen for a 
3-player game already that the game is winnable for any coalition of size $2$ but that is not winnable 
for any single player. Two players can band together to make sure that the third will lose.
Since players can help each other in 3 or more player setups, we stick here to two- player games. 
For a winning coalition $I$, the god number is 
${\rm God}_I(G,v) = {\rm min}_{H \in \mathcal{H}_I^{{\rm win}}} \max_{\gamma \in \mathcal{E}(H)} |\gamma|$,
where $\mathcal{H}_I^{{\rm win}}$ is the set of winning I-strategies and 
$\mathcal{E}(H)$ the set of game events in $H$. 
\index{coalition}
\index{winning coalition}
\index{n-player game}

\paragraph{}
We taught Chat GPT these axioms and asked it to construct a simple example of a {\bf 3 player game},
where only coalitions of two players can win. Here is the example the machine came up with. We could
only think of the {\bf truel} in the movie {\it The good, the bad and the ugly}. So lets call this the ``truel''
example.  It has ${\bf God}(G,v)=1$ for every winning coalition of size $2$. This takes into account that 
only the number of moves of the player $V_0$ are counted.
\index{three player game}
\index{truel}

\paragraph{}
The following paragraph is a machine generated result. We 
slightly edited the answer of the machine. Even so our set up is non-standard, 
the machine understood it well and creatively generate an example to the prompt:
{\it "Please construct a small game example to the attached axiom system,
where collusion occurs, meaning that there is no win for a single player but a coalition of two
can force a win for them"}. The machine was fed the above paragraph about multi-player games and 
also had previously been taught the axioms in the special case of one player or two player games, to 
have more context. 

\paragraph{}
{\bf Example:} (``truel")
\footnote{Generated by GPT 5.4, prompted on May 8, 2026. It was manually slightly formatted and shortened.}
{\it As defined, players $V_0,V_1,V_2$ move cyclically 
$V_0 \longrightarrow V_1 \longrightarrow V_2 \longrightarrow V_0$.
The game is defined by the finite directed $3$-partite graph with
$V_0=\{v,t_2\}, V_1=\{p_1,p_2\},V_2=\{q,t_1\}$ and directed edges 
$v\to p_1, \qquad v\to p_2, p_1\to t_1, \qquad p_2\to q, q\to t_2$. 
The terminal equilibrium set is $S=\{t_1,t_2\}$. We pick $A=S$ (which is allowed).
The game graph is visualized in the Figure below. Under the normal-play convention, a 
terminal vertex in $V_{k+1}$ is a win for the player $V_k$ who made the last move. Hence $t_1 \in V_2$
is a win for $V_1$, because the last move is $p_1\to t_1$ made by $V_1$. Similarly, 
$t_2\in V_0$ is a win for $V_2$, because the last move is $q\to t_2$,
made by $V_2$. Thus the individual terminal win sets are 
$A_0=\emptyset,\qquad A_1=\{t_1\},\qquad A_2=\{t_2\}$. 
a) We first check the coalitions.
(i) First, the coalition $I=\{V_0,V_1\}$ can force a win. Player $V_0$ restricts the first move to
$v\to p_1$.  Then player $V_1$ moves $p_1\to t_1$.
The game ends at $t_1$, which is a win for $V_1$, hence a win for the coalition $\{V_0,V_1\}$.
(ii) Second, the coalition $I= \{V_0,V_2\}$ can force a win. Player $V_0$ restricts the first move to
$v\to p_2$.  Then the move $p_2\to q$ is forced, and player $V_2$ moves
$q\to t_2$. The game ends at $t_2$, which is a win for $V_2$, hence a win for the
coalition $\{V_0,V_2\}$. 
(iii) Third, the coalition $\{V_1,V_2\}$ can force a win. This coalition cannot
restrict the first move of $V_0$, so both possibilities
$v\to p_1 \qquad\text{and}\qquad v\to p_2$ remain. But this is harmless. If the play goes through $p_1$, then
$p_1\to t_1$ ends the game at a win for $V_1$. If the play goes through $p_2$, then
$p_2\to q\to t_2$ ends the game at a win for $V_2$. Therefore every possible game event ends
in a win for one of the coalition members. Hence $\{V_1,V_2\}$ is also a winning coalition.
b) Now, we check that no single player can force a win. The player $V_0$ has no terminal win position in $S$, 
since $A_0= A \cap S = \emptyset$. Thus $V_0$ cannot force a win.
The player $V_1$ cannot restrict the first move of $V_0$. If $V_0$ chooses
$ v\to p_2$ then the play continues $p_2\to q\to t_2$
which ends in a win for $V_2$, not for $V_1$. Hence $V_1$ cannot force a win.
Similarly, the player $V_2$ cannot restrict the first move of $V_0$. If
$V_0$ chooses $v\to p_1$ then the play continues $p_1\to t_1$ which ends in a win for $V_1$, not for $V_2$. 
Hence $V_2$ cannot force a win. 
We have now proven that this is a simple $3$-player game in which no single
player can win, but where coalitions of 2 players can win.}

\begin{figure}[!htpb]
\scalebox{0.19}{\includegraphics{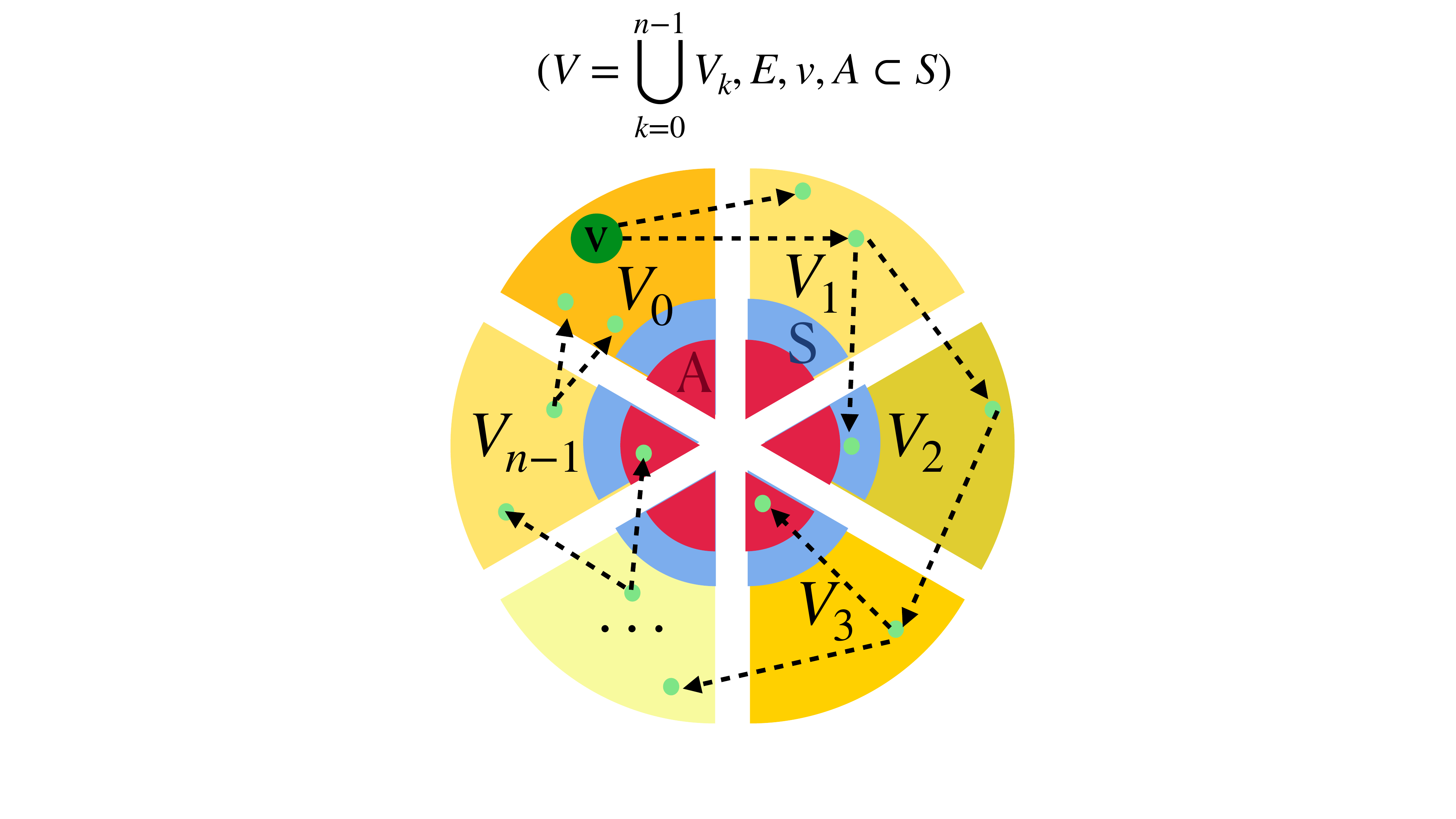}}
\label{n player game}
\caption{ 
For a multi-player game with more than 2 players,
the Zermelo analog of coalitions is already more complicated.
Coalitions can form.
}
\end{figure}

\begin{figure}[!htpb]
\begin{tikzpicture}[
  scale=1.2,
  every node/.style={circle,draw,minimum size=8mm,inner sep=1pt},
  vzero/.style={fill=yellow!35},
  vone/.style={fill=orange!35},
  vtwo/.style={fill=orange!60}
]
  \node[vzero] (v)   at (0,0)   {$v$};
  \node[vzero] (t2)  at (6,0)   {$t_2$};
  \node[vone]  (p1)  at (2,1.5)  {$p_1$};
  \node[vone]  (p2)  at (2,-1.5) {$p_2$};
  \node[vtwo]  (t1)  at (4,1.5)  {$t_1$};
  \node[vtwo]  (q)   at (4,-1.5) {$q$};
  \draw[->,thick] (v) -- (p1);
  \draw[->,thick] (v) -- (p2);
  \draw[->,thick] (p1) -- (t1);
  \draw[->,thick] (p2) -- (q);
  \draw[->,thick] (q) -- (t2);
  \draw[dashed,gray] (1,-2.2) -- (1,2.2);
  \draw[dashed,gray] (3,-2.2) -- (3,2.2);
  \draw[dashed,gray] (5,-2.2) -- (5,2.2);
  \node[draw=none,fill=none] at (0,-2.7) {$V_0$};
  \node[draw=none,fill=none] at (2,-2.7) {$V_1$};
  \node[draw=none,fill=none] at (4,-2.7) {$V_2$};
  \node[draw=none,fill=none] at (6,-2.7) {$V_0$};
  \node[draw=none,fill=none] at (4,2.3) {\small terminal};
  \node[draw=none,fill=none] at (6,0.8) {\small terminal};
\end{tikzpicture}
\label{3 player game}
\caption{ 
A simple 3 player game, where only coalitions of 
2 players can win. This example (as well as the graph) were machine generated.
The machine was only fed the above simple axiom system, without 
giving any additional help. We wanted a simple example. }
\end{figure}

\begin{figure}[!htpb]
\scalebox{0.99}{\includegraphics{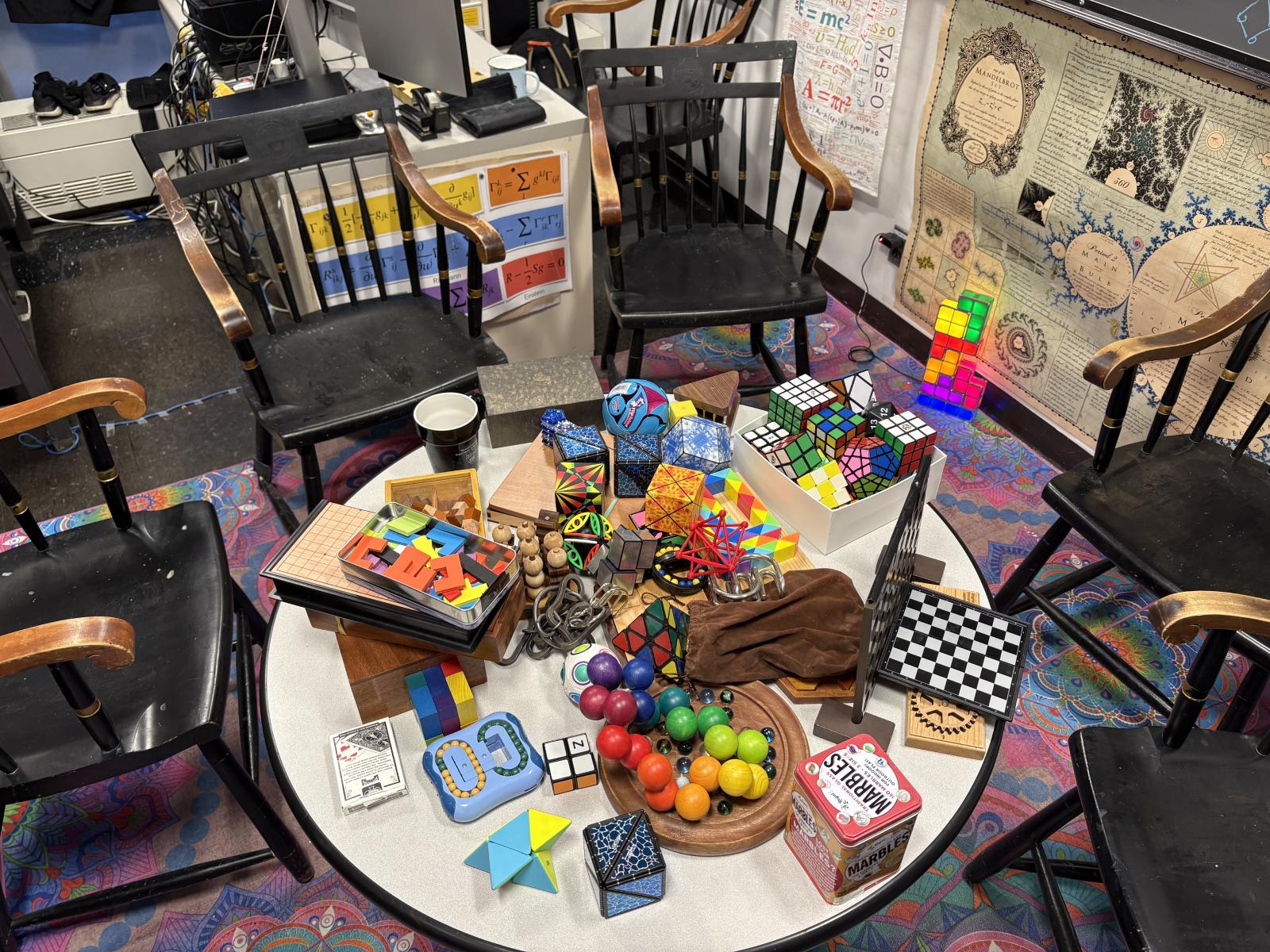}}
\label{game}
\caption{
An exhibit of a few toys and games, we have played with when working
on this project.
}
\end{figure}

\section{Introduction}

\paragraph{} 
{\bf God numbers} quantify optimal solutions for {\bf deterministic}, {\bf finite}, {\bf zero sum},
{\bf sequential} games. The term ``god number" has appeared first in the context of the 
{\bf Rubik cube puzzle} \cite{Singmaster1981,CubedRubik,Joyner}, where the 
number is now known to be $20$ 
\index{Rubik cube} 
in the half turn metric \cite{Rubik20}. The term was popularized in the early 1980ies by ``cubists"
and mathematician {\bf David Singmaster}, who distributed notes on the ``magic cube" in 1979 
already. (They were published in 1981 \cite{Singmaster1981}).
\index{zero sum}
\index{deterministic}
For a two-player game, the {\bf god number} is a
{\bf minimax optimization problem} for the game graph.
The prototype is a {\bf chess problem} ``mate in n" has god number $n$. 
\index{chess}
\index{god number}

\paragraph{}
A problem {\it ``draw in n"} for example tells that for some strategy, the maximal path length
in it leads to a draw. In a game with god number $n$, the analysis of a game
only needs to to be done in a {\bf neighborhood graph}  $B_n(v)$ of $v$. 
\footnote{This is a ball of radius $n$ in the graph distance of the directed graph. Note that
$w \in B_1(v)$ does not imply necessarily that $v \in B_1(w)$. }
``Draw" either forces a game event to reach $S \setminus A$ or then ends in a dead end,
not being able to continue without crossing itself. 
``Draws" can also be a ``theoretical draws" meaning that they are undisputed end positions.
In chess for example, if we reach a position where both players only have a king and a horse. 
The game does not need to be continued. We could bake this into the geometry by 
removing any outgoing edges at such points of the game graph. As more advanced players are, 
as more they can prune the game graph and also label positions $w$ as points in $A$ which 
are clearly winning. Pruning means to remove any outgoing edges from $w$.
\index{pruning the game graph}

\paragraph{}
The class of games that can be described as such is close to \cite{Zermelo1913}, who 
would rather see it as a playground within set theory rather than geometry. Zermelo's article was 
only three and a half pages long and needed various clarifications \cite{SchwalbeWalker}.
Restricting to graph situations covers most puzzles and games. The geometric language 
of graphs allows to use intuitive geometric notions.

\paragraph{}
The geometric setup models the usual assumptions:

\begin{itemize}
\item The term {\bf zero sum} means that if one players wins the other playes loses.
It is not possible that both win. If both lose, the game is a draw. 
\index{zero sum}
\item {\bf Deterministic} means that both players at any time have the same full access to 
information, that no external randomness nor third party input can happen. 
\index{deterministic}
\item {\bf Sequential} means that players always take alternate turns. This is encoded
in the bipartite structure of the graph. It also is an ingredient to deterministic. 
\index{sequential}
\item {\bf Finite} means that the entire game has only finitely man states. 
The game is so encoded as a {\bf finite simple graph}, where the nodes are 
the possible configurations and where the oriented edges are the possible moves. 
\index{finite}
\item {\bf Finite events} are forced by the assumption that all {\bf game events} describing 
game events must be maximal simple paths starting at $v$. The case for having no self intersections
is done such that there are only finitely many game events. 
\index{game event}
\item {\bf No cycles} is encoded in the simplicity of the game events. 
It is an important assumptions we want a finite set of game events. The convention in chess,
limiting the number of repetitions can easily be bent to this. If a position appears three times,
just cut away the loops which have led to it and stop the game after the first repeat. 
\end{itemize}
\index{3 position game rule}

\paragraph{}
The standard $3 \times 3 \times 3$ {\bf Rubik cube} $G$ is is a popular solitaire game.
\cite{Singmaster1981, CubedRubik,Joyner}.  Its god number ${\rm God}(G)$ is known to be 20. \cite{Rubik20}.
As for all Cayley group games, it is the diameter of the Cayley graph of a finite 
simple graph with $R=8! 12! (3^7) (2^{10})$ elements. Since this is larger than $2^{65}$,
we can not hope to draw out the entire game graph.

\paragraph{}
In {\bf chess}, the number of nodes of the graph has an upper bound $10^{47}$. 
Claude Shannon estimated the number of game positions as $64!/(32! 8!^2 2!^6) \sim 10^{43}$ without promotion
\cite{Shannon1950}. The best upper bound appears to be $2* 10^{40}$ \cite{Chess2015}.  
It is not the size but the complexity of this game which which often makes it 
impossible to compute the god number of chess yet. To illustrate this, look at the
$5 \times 5 \times 5$ Rubik cube (which is called "professor cube"). This game 
has $24!^3/24^{12}$ positions, more than the number of chess positions. 
\index{professor cube}
\index{Claude Shannon}

\paragraph{}
The size of the neighborhood graphs $B_r(v)$ of the initial position $v$ in chess grows fast and 
is known as the OEIS number A048987. After 1 play of white we have 20 positions, after a play of
white and black, there are 400. The current status is after 15 half moves.
On smaller chess boards or with initial conditions
with very few figures (chess puzzles), we can draw out all the positions. 
\index{Shannon estimate}
\index{professor cube}
\index{chess puzzles}

\paragraph{}
One reason to restrict to 2 players is that for more than two-players, like a 
{\bf 3 player chess}, the possibility of {\bf collusion} is possible already.  
\index{collusion}
With more players, one also has to clarify the order in which players play like taking turns in a
circular way. A geometric description is not as simple as for 2-player games because
Zermelo's theorem does not generalize in an obvious way. It is not true that either
one of the player wins or that we have a draw. We can have a perfectly symmetric case
with 3 players where one of the players needs to pick a strategy favoring one of its
opponents and so decide the winner. The game itself does not determine that as in the
case of  2-player setup. 

\paragraph{}
Also the sequential assumption is important. If the players could move simultaneously, 
like in a {\bf rock-paper-scissors} game or {\bf prisoner dilemma} game, the two 
players would have both to move at the same time.
Since in that case only partial information is available, {\bf Nash equilibria} conditions
start to matter. If randomness was allowed, it would force to include a probability 
space that to generate the input. This would force us to look at very large graphs
or then look at conditional expectations. 
\index{prisoner dilemma}
\index{Nash equilibrium}
\index{rock paper scissors}

\paragraph{}
We can exploit that many popular puzzles and games naturally are given 
as finite directed graphs (even so the graphs are too large to be drawn out
like in chess). This allows for a rather intuitive axiomatic setup
which is simpler, if one compares it with the existing alternatives.
The geometric frame work only appears to be a limitation if one would insist
that one can construct the entire game graph. But such limitations appear everywhere:
we can not explore the graph of all integers $\mathbb{N}$, where two are connected
if one is a factor of the other for example. This graph exists but we can only see
a small pert of it.  Similarly, 
if games are small enough, we have a reasonable chance to 
oversee the entire graph and measure the god number. 

\paragraph{} For the foundations of game theory, see \cite{NeumannMorgenstern}. 
The geometric setup restricts to games that have entered {\bf popular culture}. 
Remarkably, the number of board 
games or solitaire games that can be described using 
graphs is quite rich. 

\paragraph{}
This paper is a collaborative author effort as part of a reading class
Math/CS 91r, which met weekly in the spring 2026 semester at Harvard. 
Initially, we split things up as follows
among the different group members but there was some overlap. This is
reflected also in the current document as we did not optimize the document
for brevity. 

\begin{itemize}
\item "transposition games" (15-game, color shift, graph, peg) 
\item "group type games", (rubik, clock, hungarian rings) 
\item "pile games" (nim, tower of hanoi, mankala )
\item "chess type games" (chess, checkers, go, morris) 
\item "card type games", (solitaire, card ordering) 
\item "geometric games" (tic-tac-toe, connect 4, Hex)
\end{itemize}

\paragraph{}
As for originality, one should note that graph theoretical approaches to 
games are hardly original, this probably goes back to Cayley already. But it should be 
pointed out again that traditionally one looks at ``game trees".
The closest in literature is the recent book \cite{Fijalkow}, where  a strategy (or {\bf policy}) 
is formally seen a function from 
all paths to edges (see page 16 in the open preprint version of \cite{Fijalkow}).
The language uses {\bf arenas} or ``game structure" which could be interpreted as a game graph. 
There are other major differences in that {\bf quantitative} or {\bf qualitative} 
winning conditions are considered. There are other major differences in that all vertices
have an outgoing edge in \cite{Fijalkow}. We feel that this is very restrictive. Already simple games
like peg solitaires or Tic-Tac-Toe two player games have positions without outgoing edge. One could
mend it by assuming that such positions have self-loops. 
\index{arena}
\index{policy}

\paragraph{}
Zermelo's work of 1913 can be seen as the
beginning of formal {\bf game-tree search}. But Zermelo did not use the language of graphs. 
This should probably be attributed to Kalmar from his 1928/29 work on abstract games \cite{Kalmar1928}. 
Kalmar's article was written in German would use terminology like {\bf ``taktik"} instead of strategy.
Kalmar's work considerably clarifies Zermelo's theorem. 
See \cite{DimandDimand}.
\index{taktic}
\index{tactics}

\paragraph{}
The elementary graph-theoretic formalism presented here is close to finite games 
with perfect information and turn-based multiplayer games on graphs which 
are also called ``arenas" with reachability objectives. We think that
a novelty of the present formulation is the {\bf elementary subgraph interpretation}
of strategies and seeing the associated ``god numbers" as measuring the worst-case 
number of moves by the initial player needed under a winning strategy. The 
simplicity of the set-up is maybe best illustrated in that the theorem of 
Zermelo is immediate and that many commonly known puzzles and games can be 
covered with the language. 

\begin{figure}[!htpb]
\scalebox{0.29}{\includegraphics{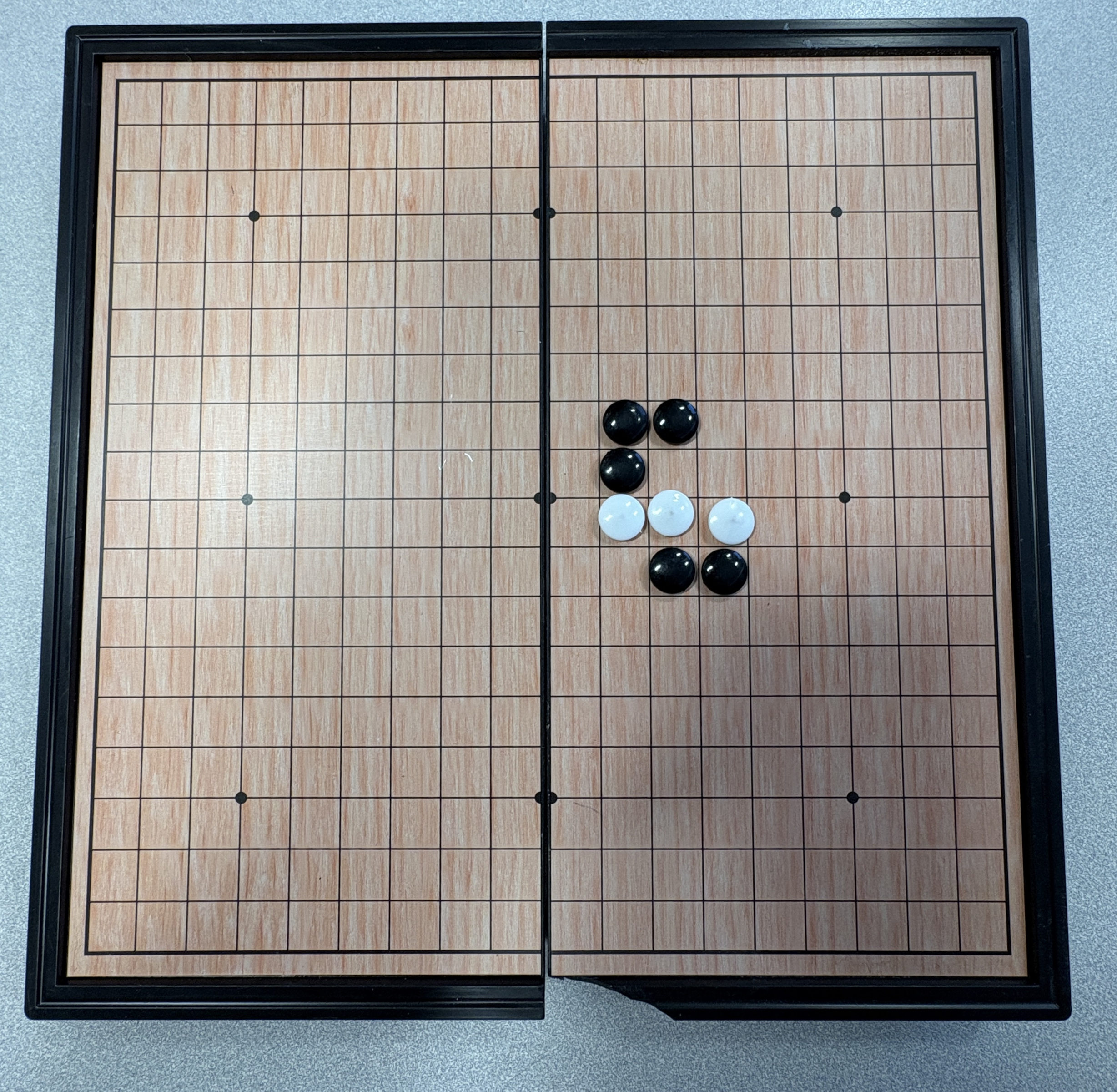}}
\scalebox{0.59}{\includegraphics{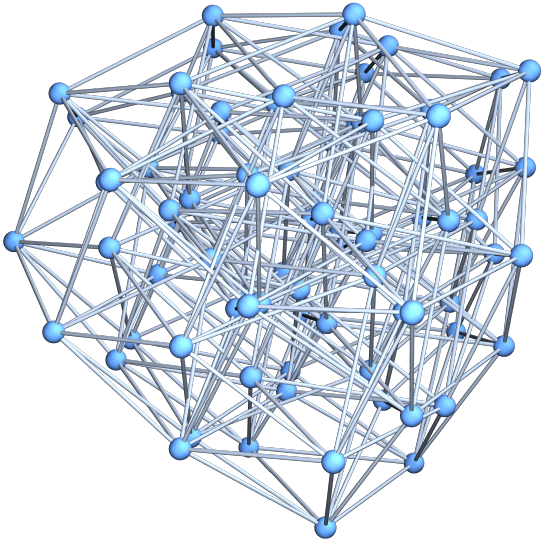}}
\label{To type}
\caption{
{\bf Infinite connect} 5 (Gomoko) can also be played on finite boards like
on a $19 \times 19$ {\bf go game board}. ``Go" itself has been played for more
than 4000 years. Go type games or hex type games are examples of 
{\bf topological games}, where the winning positions are geometrically
defined. $5 \times 5$ go was determined to be a win for the first player
(Black) \cite{Werf2003}, like for {\bf ``microgo"} on $2 \times 2$ board which
requires only to look at $3^4=81$ possible positions. The game graph of 
microgo is seen to the right.
}
\end{figure}
\index{Go game}
\index{Gomoko}
\index{microgo}
\index{infinite connect}

\paragraph{}
In the case of group puzzles, the concept of ``god
number" seems first have emerged when Rubik type puzzles came up in the late 1970ies. 
It definitely appears in the write-up of \cite{Singmaster1981} (page 34) in the form of 
{\bf ``God's algorithm"}.
Mathematicians certainly have contemplated about the minimal number of moves 
to solve a puzzle before.  For 2-player games like chess, the word "god number" is not
used, but most "solve a position in n-moves" belong to god number problems.
We say more about the history in a separate section.
\index{God's algorithm}

\begin{figure}[!htpb]
\scalebox{0.39}{\includegraphics{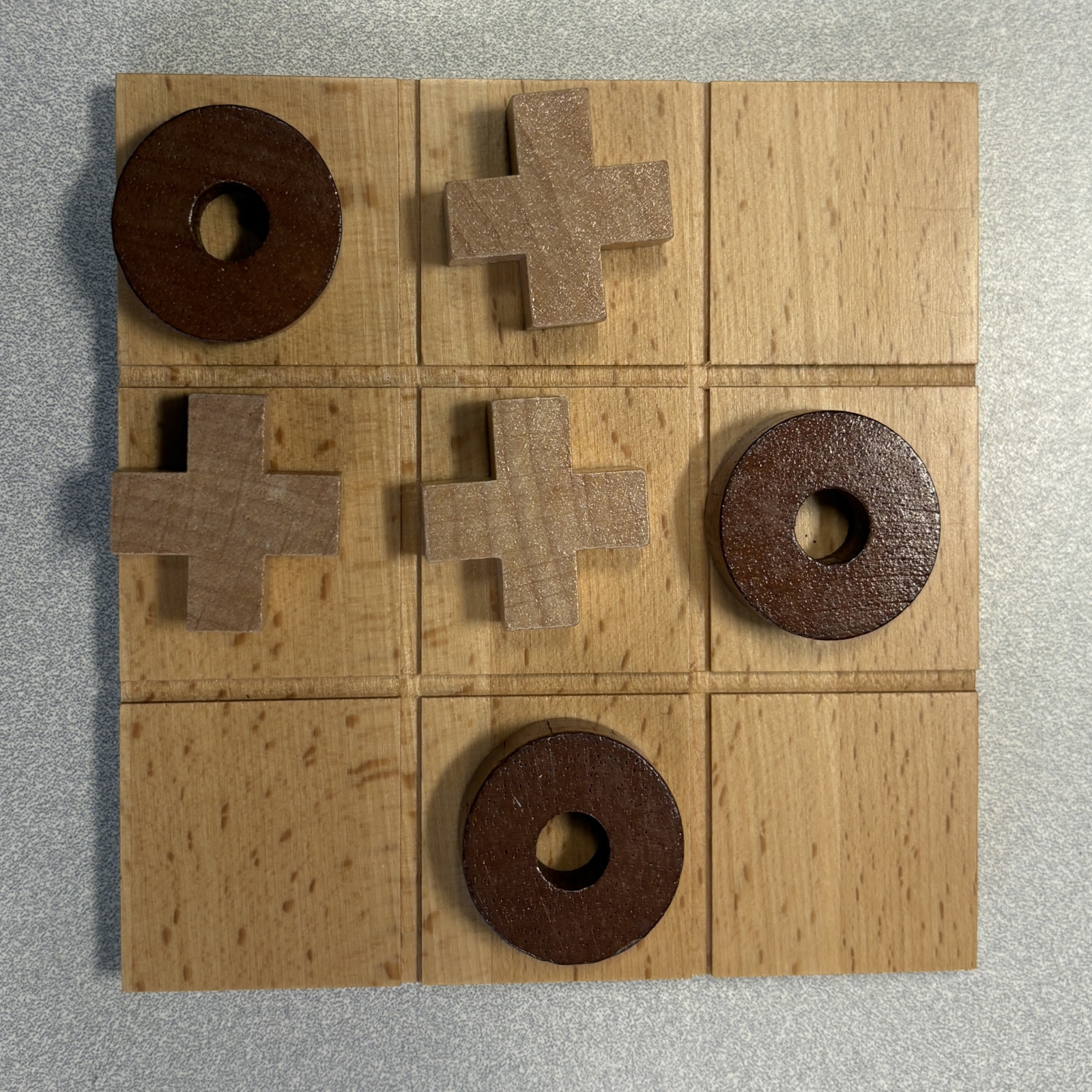}}
\label{Tic Tac Toe}
\caption{
Tic Tac Toe is infamous due to its simplicity and its multi-cultural value, appearing
in blockbuster Hollywood movies like ``WarGames". It is a 2-player
game that is well known to lead to a draw. We can easily map out the entire game graph.
}
\end{figure}
\index{WarGames}

\begin{figure}[!htpb]
\scalebox{0.15}{\includegraphics{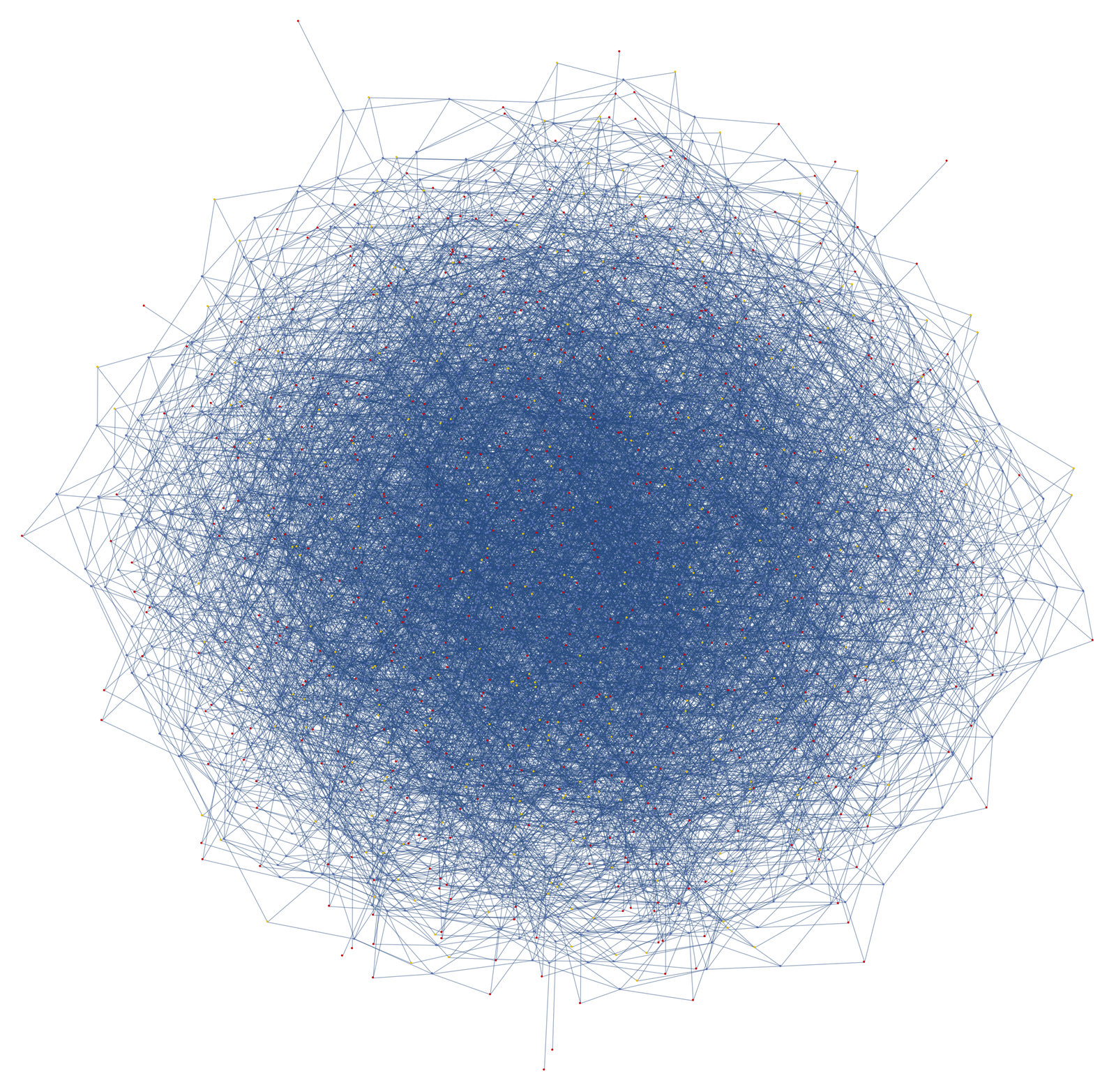}}
\scalebox{0.59}{\includegraphics{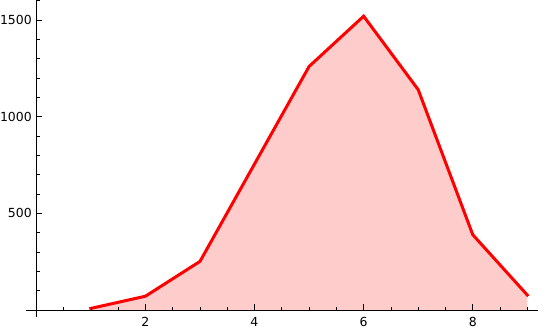}}
\label{tic tac toe graph}
\caption{
The game graph of the standard Tic Tac Toe game has 5478 vertices.
This the number of eligible the $3^9$ possible $3 \times 3$ matrices with 
entries in $\{0,1,2\}$ (we need to look a subset because the number of white stones
is always one larger or equal than the number of black stones). 
The graph ball sizes $B_r(0)$ starting from the initial $v=0$ position
are $1, 10, 82, 334, 1090, 2350, 3870, 5010, 5400, 5478$. The difference sequence
is plotted to the right. It  is known as the {\bf BFS layer profile}. 
The {\bf ball} (=neighborhood graph) $B_9(0)$ covers the game graph. 
The maximal game length is $5$ (if measured in the number of
moves of White). 
}
\end{figure}
\index{BFS layer profile}

\begin{figure}[!htpb]
\scalebox{0.65}{\includegraphics{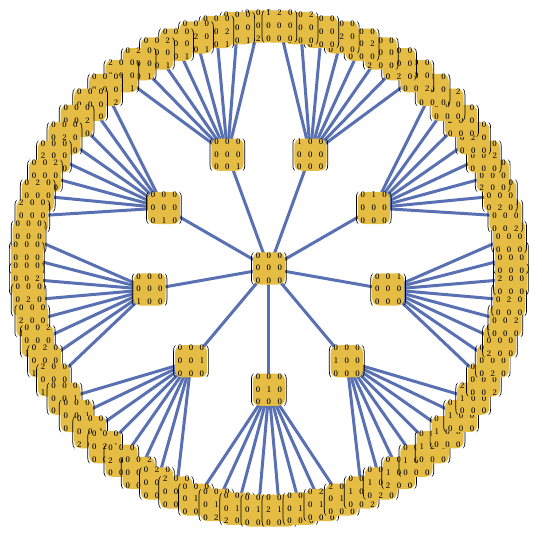}}
\label{tic tac toe Ball}
\caption{
We see the ball $B_1(0)$ of play radius $1$ centered at $0$. 
It consists of 82 possible Tic Tac Toe games in which both players
have played one stone. We count the number of plays of the first
player, as in chess. 
}
\end{figure}

\begin{figure}[!htpb]
\scalebox{0.65}{\includegraphics{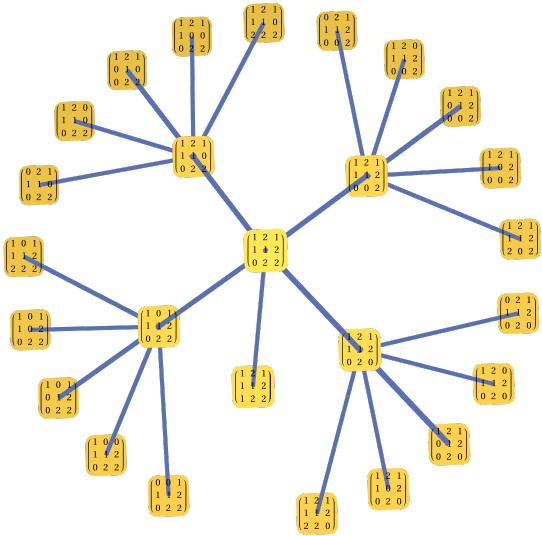}}
\label{tic tac toe Ball last}
\caption{
We see the ball $B_1(0)$ of play radius $1$ centered at 
a winning position for the first player.
}
\end{figure}

\begin{figure}[!htpb]
\scalebox{0.65}{\includegraphics{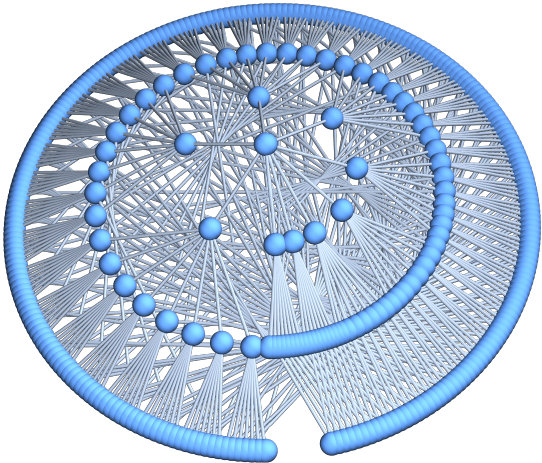}}
\label{tic tac toe Ballof radius 3}
\caption{
We see the graph after an other move by the first player.
}
\end{figure}

\paragraph{}
For a $4 \times 4$ Tic-Tac-Toe game, we already have to weed through 
$3^{16}=43'046'721$ positions. Exploring the full graph is already 
tough with standard computer algebra.

\paragraph{}
Here is an example of a Cayley graph for which the BFS profile is {\bf not unimodal}.
\footnote{This example was found AI assisted.}
The graph $G$ is defined by the generators $48,88,138,-48,-88,-138$ in 
the Abelian group $(\mathbb{Z}_{713},+)$. Its  BFS layer profile 
$(1,6,18,38,66,102,140,138,140,60,4)$ has a small dip $140,138,140$. 
\index{BFS profile}
\index{unimodal}
\index{Abelian group}

\begin{figure}[!htpb]
\scalebox{0.65}{\includegraphics{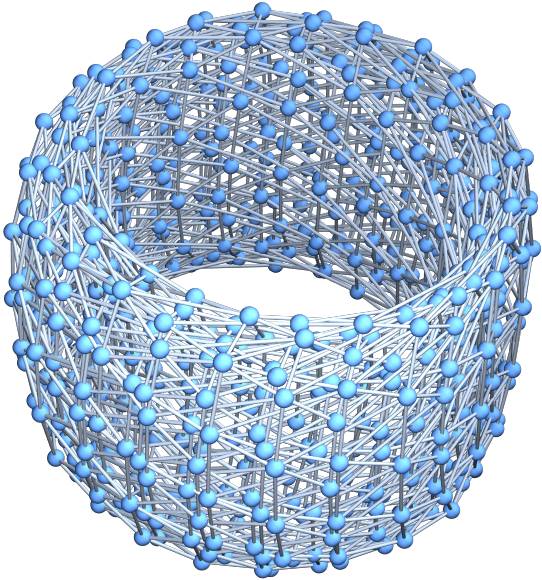}}
\label{bfs}
\caption{
We see the Cayley graph of an Abelian group in which 6 generators 
$a,b,c,a^{-1},b^{-1},c^{-1}$ are given.
Its diameter is 10. The BFS layer profile has a concave up segment.
Most Cayley graphs show a unimodal BFS layer profile.
}
\end{figure}
\index{BFS layer profile}

\vfill
\newpage

\section{Solitaire games}

\paragraph{}
A {\bf solitaire game} or {\bf puzzle} is a finite simple directed graph $G=(V,E)$, 
a vertex $v \in V$ as
{\bf starting position} and a subset $A \subset V$ representing the {\bf solved positions}. 
\index{Solitaire game}
\index{starting position}
\index{solved positions} 
Solved positions also also called {\bf win positions}.  
\index{win positions}
The player starts at a position $v \in V$. The goal is to find the shortest game event
from $v$ to $A$ if it exists. The puzzle moves do not need to be reversible. 
A ``peg solitaire" is an example, where one can not turn back. That game can be played 
on many geometries, the most common being sub-graphs of lattices.
\index{peg solitaire}

\paragraph{}
The {\bf god number} ${\bf God}(G,v,A)$ of a winnable puzzle is the {\bf geodesic graph distance}
${\rm min}_{w \in A} d(v,w)$.  If $v$ and $A$ are not described, the god number $\Gamma(G)$ of the graph 
can still be defined as the {\bf diameter of the directed graph}.  When talking about the god
number of a solitaire game, we usually mean the {\bf game graph diameter}.
This is the solution of a {\bf maximin variational problem}
$$  {\rm God}(G,A) = \max_{v \in V} \min_{\gamma} |\gamma| \; . $$
\index{maximin variational problem}
If $A$ is not specified, the god number of the game graph $(V,E)$ can be given as 
$$  {\rm God}(G) = \max_{v \in V, w \in V} \min_{\gamma: v \to w} |\gamma| \; , $$
where $\gamma$ is a simple path from $v \in V$ to $w \in W$. This is the obviously 
the same than the {\bf graph diameter}, the maximal geodesic 
distance between any two vertices in $V$. Also the diameter is a maximin problem
$\max_{v \in V, w \in V} \min_{\gamma:a \to b} |\gamma|$. 
\index{geodesic distance}
\index{graph diameter}

\begin{figure}[!htpb]
\scalebox{1.0}{\includegraphics{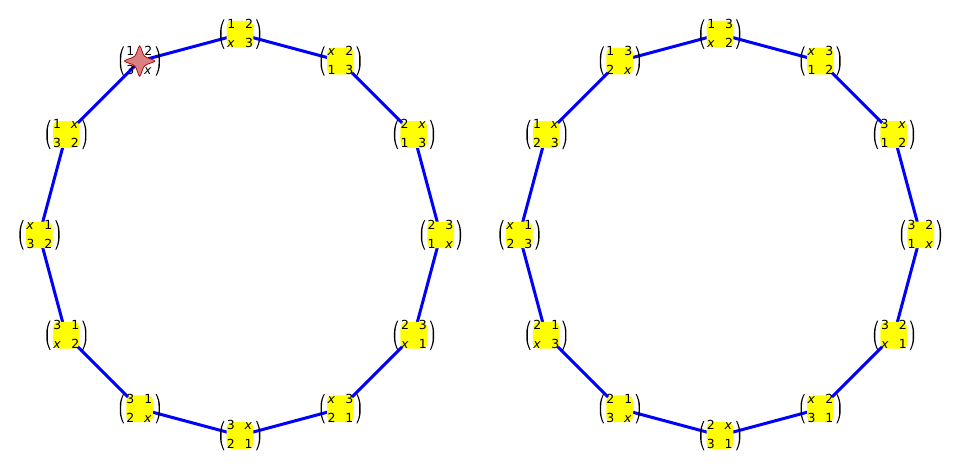}}
\label{3 Puzzle}
\caption{
The {\bf 3-puzzle} game $G$ is an example of a sliding game. It is 
the $2 \times 2$ analog of the {\bf 15 game} which is played on a 4x4 board.
The graph has two connected components; each is a cyclic graph of 
order 12. The god number in the winnable component is ${\rm God}(G)=6$. 
For the 8-puzzle, the god number is 31. For the 15 game, the god number is $80$. 
For the $n \times n$ generalization, its computation is NP complete. 
}
\end{figure} 
\index{15 puzzle}
\index{8 puzzle}
\index{nxn-1 puzzle} 

\begin{figure}[!htpb]
\scalebox{0.85}{\includegraphics{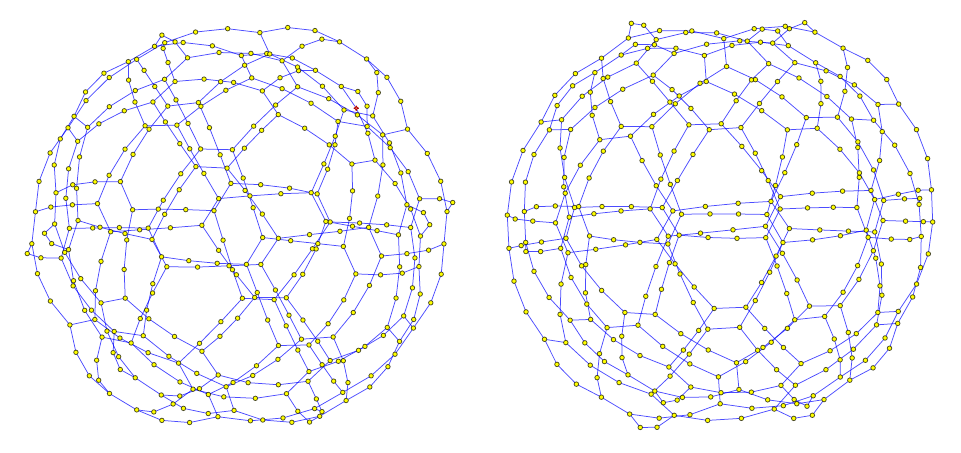}}
\label{5 Puzzle}
\caption{
The 5 puzzle game is the $2 \times 3$ analog of the 15 game.
The graph has two connected components. The god number is 21. 
}
\end{figure}
\index{5 puzzle}
\index{nxn puzzle}

\paragraph{}
In the code section we have included the code which generated the following picture.
We show here only the first connected component, which represent the solvable cases:
the position
$\left[ \begin{array}{ccc} 4 & 5 & x \\ 1 & 2 & 3  \end{array} \right]$ has distance 21 from 
$\left[ \begin{array}{ccc} 1 & 2 & 3 \\ 4 & 5 & x  \end{array} \right]$.

\begin{figure}[!htpb]
\scalebox{0.85}{\includegraphics{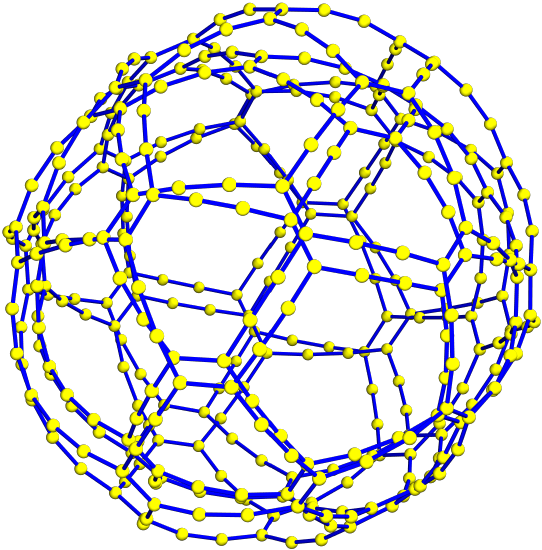}}
\label{5 Puzzle}
\caption{
The solvable connected component of the previous picture is here seen embedded in 3D. 
The game graph has $360=6!/2$ vertices and $420$ edges. All vertex degrees are equal to 
$2$ or $3$. 
}
\end{figure}

\paragraph{}
Whether a solitaire has a win strategy depends on whether 
$v$ is in the same connected component than one of the points $A$. 
If $v$ is disconnected from $A$, there is no solution. There are various mechanisms
which can render $G$ disconnected. The most obvious one is an invariant 
non-constant quantity. 
\index{invariant} 

\paragraph{}
{\bf Examples:}	\\
{\bf a)} For the {\bf 15-puzzle}, the initial position is given by a permutation in $S_{16}$.
The graph has two connected components. The parity of the permutation 
and the signature of the $l_1$-distance of the hole to $(4,4)$ is an invariant. \\
\index{invariant}
{\bf b)} For a finitely presented permutation group with generators that all have even 
signature, the signature of the initial condition is an invariant, the reason being 
that the composition of two even permutations is even. \\
\index{signature}
{\bf c)} In the Rubik cube case, the total rotation of all cube corners has to be a multiple of $2\pi$. 
A situation, where one cube is turned by 120 degrees has been called a {\bf "quark"} configuration in \cite{Golomb}.
Three different quarks model a {\bf Baryon}, two different quarks where one is turned by 120, 
the other by 240 is a {\bf Meson}. We see so caricatures of {\bf Hadron}'s in finite groups. While quarks
can not be realized, Hadron combinations can. 
\index{Rubik cube}
\index{quark}
\index{meson}
\index{boson}
\index{hadron}
\index{signature of permutation}

\begin{figure}[!htpb]
\scalebox{0.8}{\includegraphics{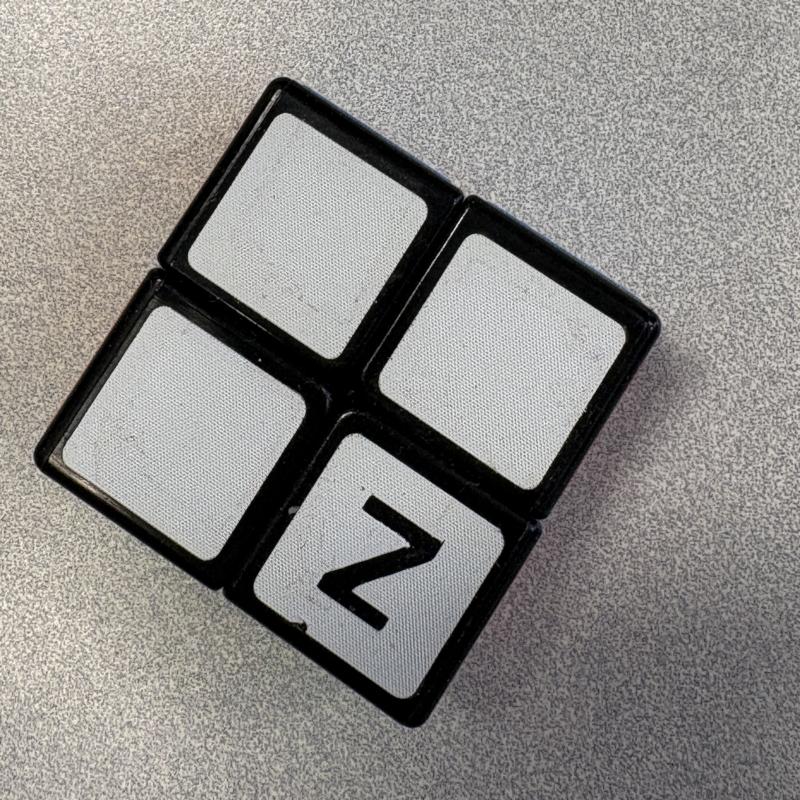}}
\scalebox{0.8}{\includegraphics{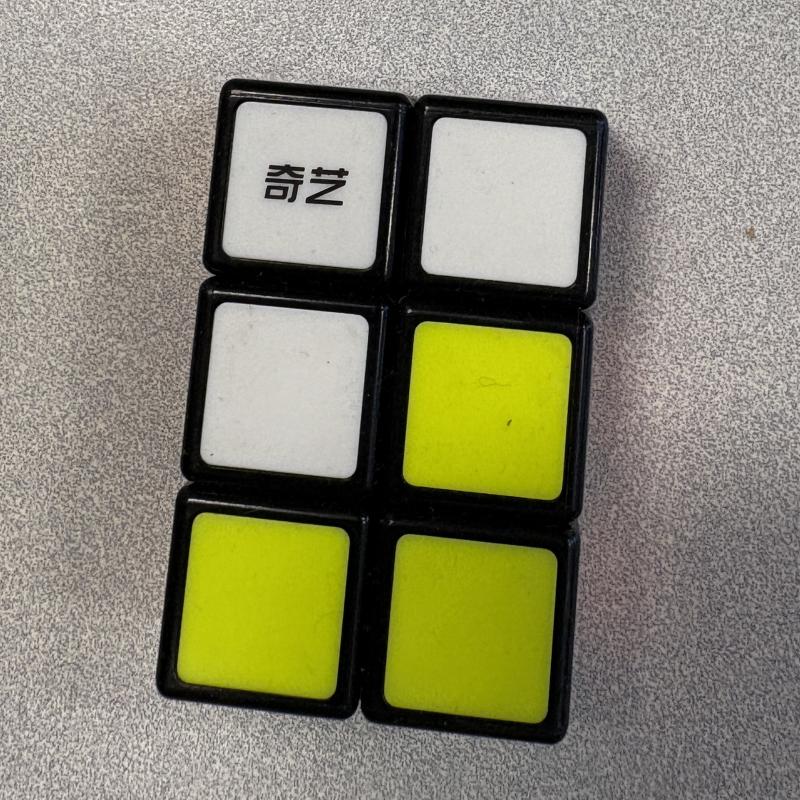}}
\scalebox{0.8}{\includegraphics{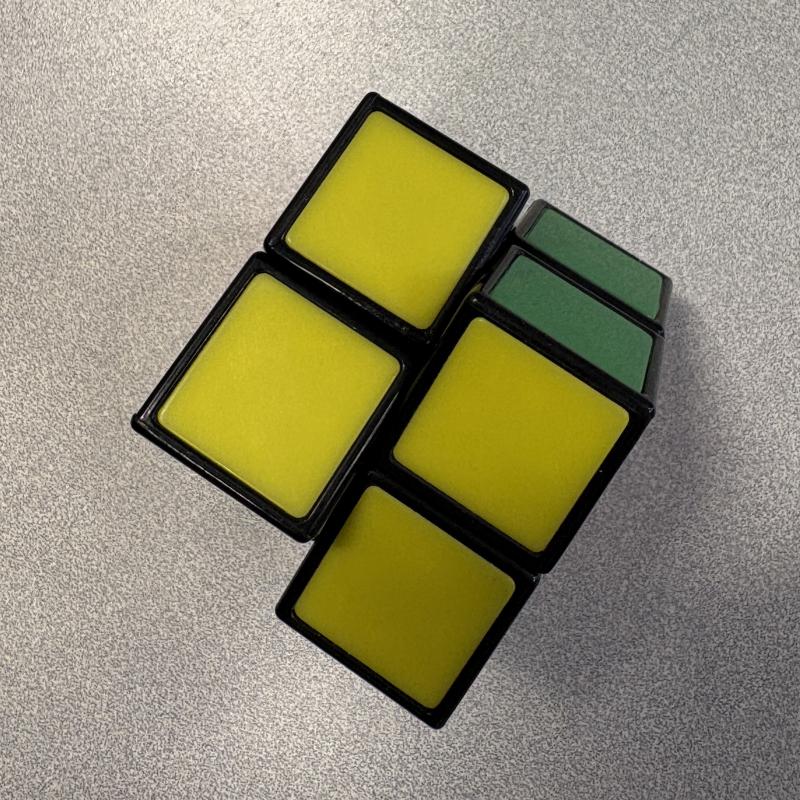}}
\scalebox{0.8}{\includegraphics{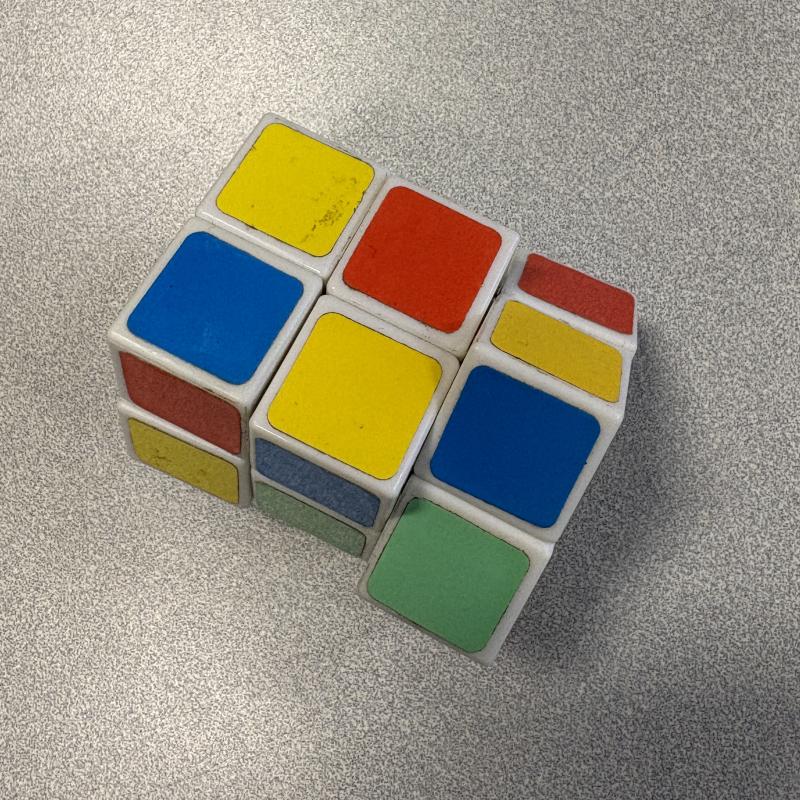}}
\scalebox{0.8}{\includegraphics{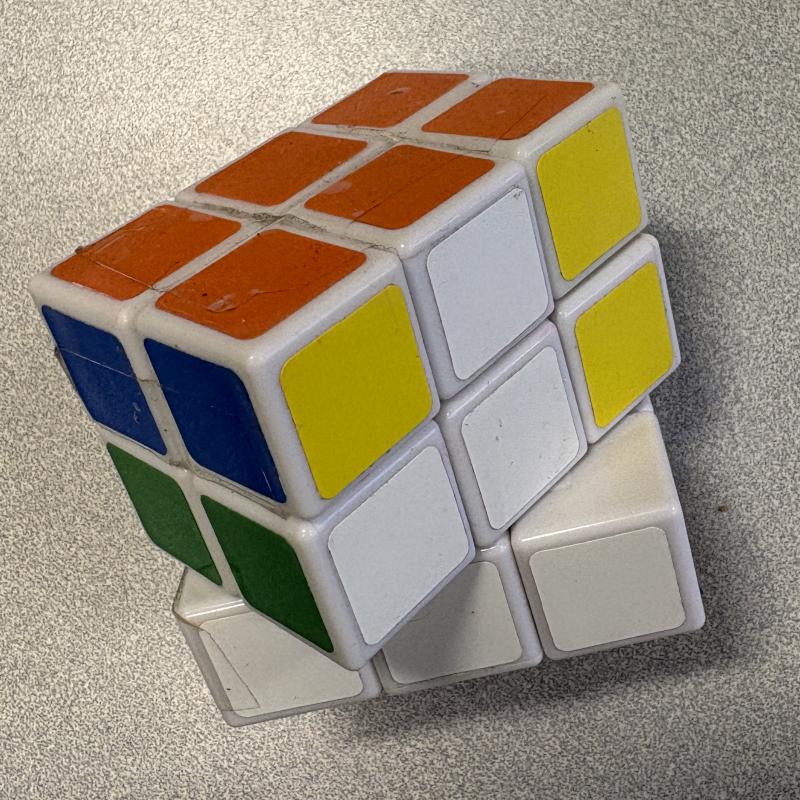}}
\scalebox{0.8}{\includegraphics{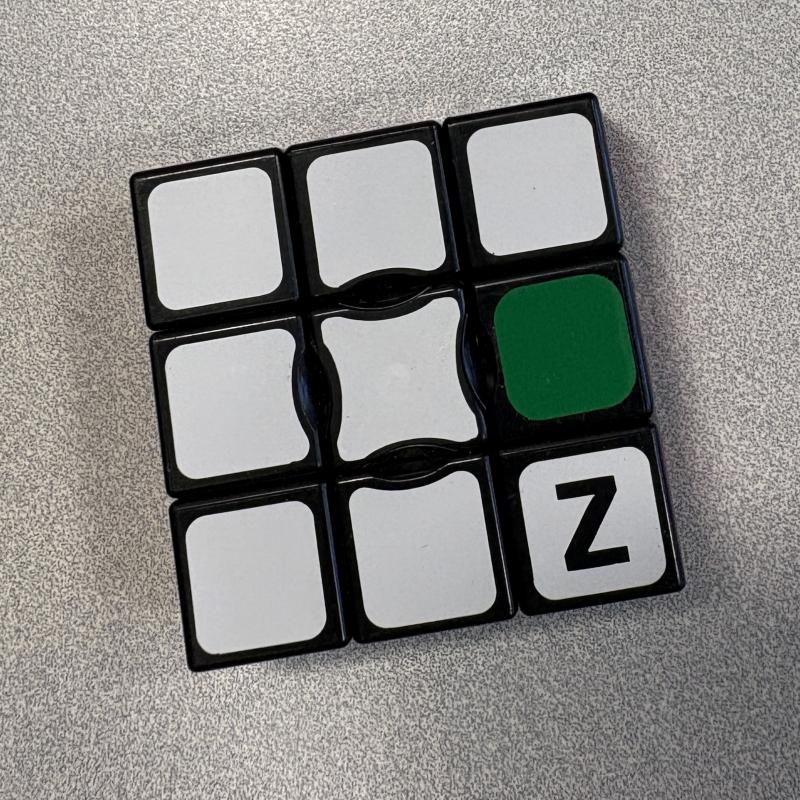}}
\scalebox{0.8}{\includegraphics{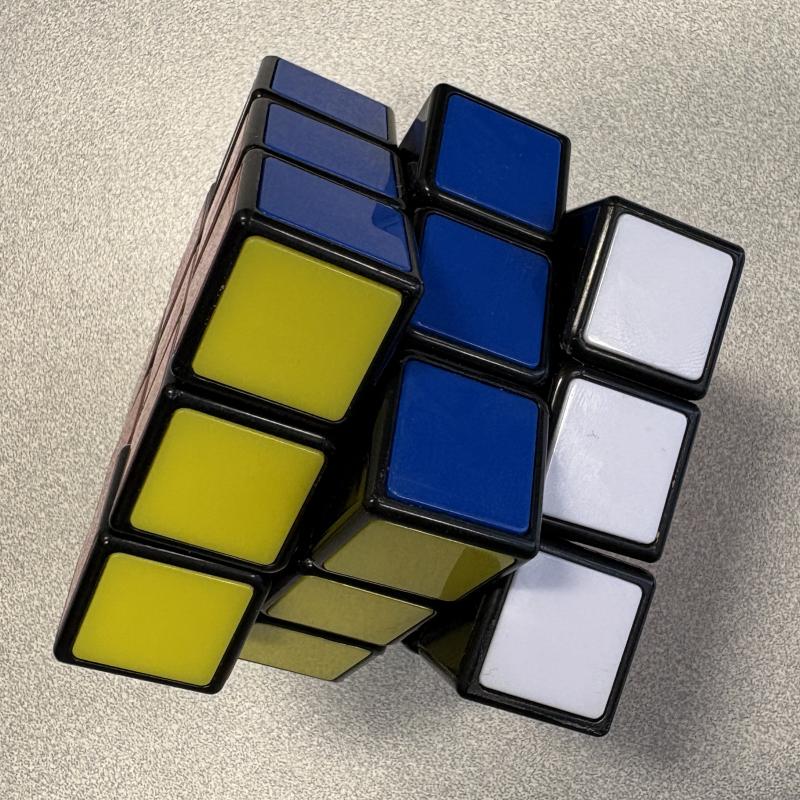}}
\scalebox{0.8}{\includegraphics{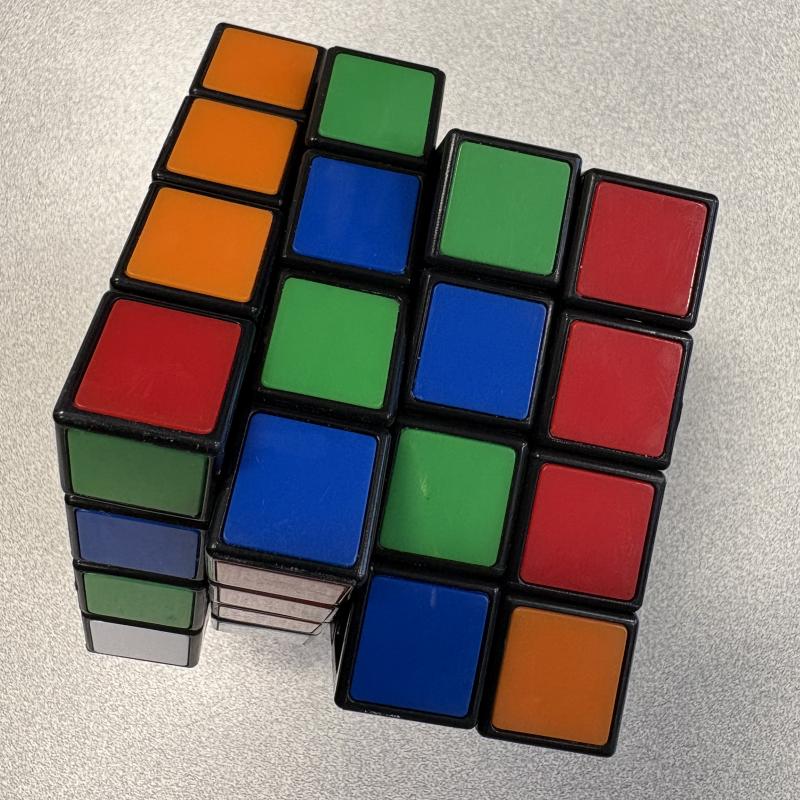}}
\scalebox{0.8}{\includegraphics{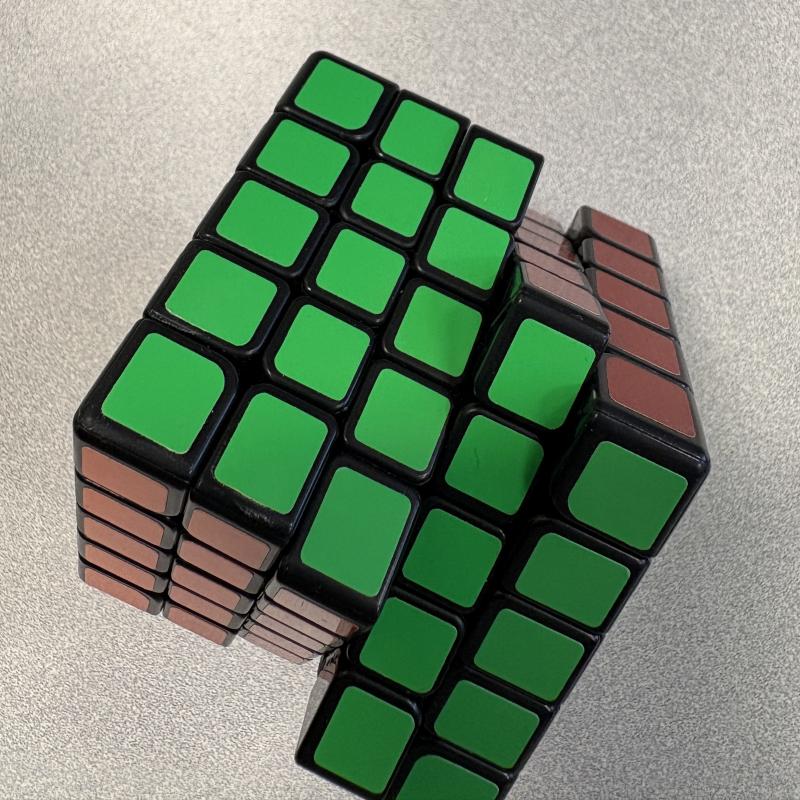}}
\label{Rubik}
\caption{
Rubik puzzles of size $2 \times 2 \times 1$, $2 \times 3 \times 1$, 
$2 \times 2 \times 2$ (pocket cube),
$2 \times 2 \times 3$, $2 \times 3 \times 3$, $3 \times 3 \times 1$, 
$3 \times 3 \times 3$, $4 \times 4 \times 4$ and $5 \times 5 \times 5$ 
(professor's cube). 
}
\end{figure}
\index{Professor's cube}
\index{pocket cube}

\vfill
\pagebreak

\section{Two player games}

\paragraph{}
A {\bf two-player game} is a finite simple directed bipartite graph $G=(V \cup W,E)$ 
together with a vertex $v$ giving the staring position and a subset $A$ of {\bf win positions} 
in $S$, where $S$ is the set {\bf end positions}, the set $S$ of vertices without an
emanating edge. Points in $S$ are game positions, where no move is possible any more. 
The set $S$ of end positions is part of the rules of the game and sometimes artificially
added. For example, it can be a position where naturally no move is possible any more.
It can be a position which is defined to be a defeat like if the {\bf king is mate}, or then a 
position which is declared to be hopeless, like a position where both players do not have 
enough resources to win and where it would be pointless to continue. An example in 
chess would be if both players only have a king and a horse, in which case only a ``help mate"
could render a win. 
\index{king is mate}
\index{hopeless position}
\index{help mate}

\paragraph{}
An {\bf game event} is a maximal simple path $\gamma$ in $G$ that starts from $v$.
The assumption is that on each directed edge, the path needs to follow the direction 
of the digraph. Simple means that no vertex is covered twice.
Pawn moves in chess are examples of directed moves. 
Capturing an opponents piece is an other example of a non-reversible move in chess. 
The {\bf length $|\gamma|$ of $\gamma$} is the number of $V$-vertices in $\gamma$. 
We choose this and not the length of $\gamma$ in order to have ``mate in n moves" be a 
problem with god number $n$.
We also designate a subset $A \subset S$ as the set of
{\bf win positions}. The complement of $A$ in $S$ are declared to be {\bf draw positions}. 
\index{game event}
\index{win positions}

\paragraph{}
A {\bf strategy} is a sub-graph $H$ of $G$. 
A {\bf strategy for $V$} is a strategy that does not remove any edges starting from $W$.
A {\bf strategy for $W$} is a strategy that does not remove any edges starting from $V$. 
A {\bf win strategy for V} is a strategy for $V$ such that every game event ends in $A \cap W$. 
A {\bf win strategy for W} is a strategy for $W$ such that every game event ends in $A \cap V$. 
A {\bf draw strategy} is a strategy that is neither a win strategy for $V$ nor for $W$. 
\index{strategy}
\index{win strategy}
\index{draw strategy}

\paragraph{}
The {\bf god number of a strategy $H$} is the maximal possible length of a game event in $H$.
For a game that can be won by $V$, the {\bf god number} is 
${\rm God}(G)=\min_{H} \max_{\gamma=\gamma(H)} |\gamma|$,
where $H$ runs over all $V$-strategies leading to a win of $V$. The other cases are similar. 
While minimax principles for two-player zero-sum games go back John von Neumann,
the min-max situation in our case is much simpler. The length of a game event is 
the potential loss. To get the god number, we minimize the maximal loss over 
all possible game strategies. 
\index{von Neumann}
\index{minimax principle}
\index{god number of a strategy}
\index{god number of a game} 

\begin{figure}[!htpb]
\scalebox{0.65}{\includegraphics{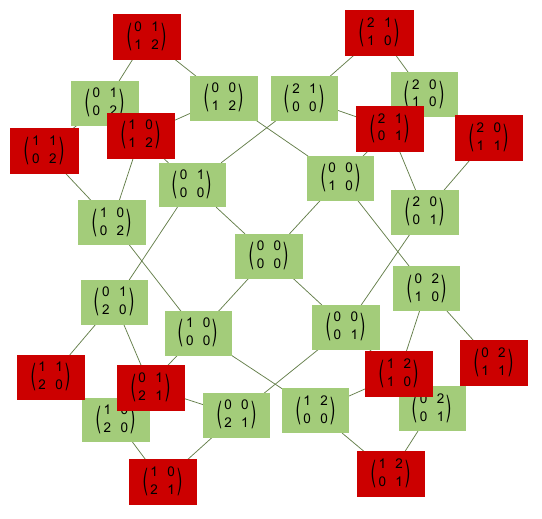}}
\label{2 TicTacToe}
\caption{
we see the simple $2 \times 2$ tic-tac-toe game has no draw. The initial player $V$ always
wins. The god number is 2. The initial player always wins in 2 moves. The empty board is 
the initial position $v$. The vertex set $V$ is the set of boards with an even number of stones. 
The vertex set $W$ is the set of boards with an odd number of stones. $S$ is the set of 
boards where all squares are occupied or where a row, a column or a diagonal have the same
color.}
\end{figure}

\paragraph{}
{\bf Examples:} \\
{\bf a)} Let us look at a chess game on a $2 \times 4$ board, where both parties only have 
king or queen and no other figures are present. This is obviously a draw case. 
The set of configurations with only kings present is declared to be part of $S$ as the
game does not continue there. The initial player $V$ = (White) has a winning strategy. 
It removes all initial plays except capturing the other queen. 
$W$ can now use the empty $W$-strategy because it is forced to recapture the queen. But then 
we are in $S$.  \\
{\bf b)} Lets look at a chess game on a  $4 \times 3$ board again with only king and 
queen on both sides at the corners. Now the initial player $V$ = (White) 
has an obvious win strategy: capture the queen. It is mate in 1. The god number is $1$. 

\begin{figure}[!htpb]
\scalebox{0.5}{\includegraphics{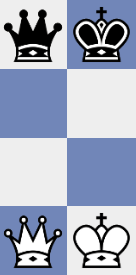}}
\hspace{15mm}
\scalebox{0.5}{\includegraphics{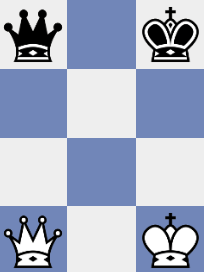}}
\hspace{15mm}
\scalebox{0.5}{\includegraphics{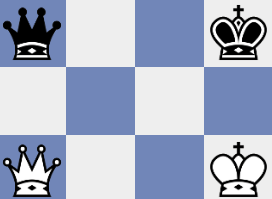}}
\label{King Queen Chess}
\caption{
In the left case, we have a draw. The god number is $1$ because after both parties have
moved, we only have kings left, which is part of $S$. 
In the middle case, there 8 possible legal moves initially, one of them (catch the queen)
leads to a white win, two to black wins and the rest to draws. 
A game engine tells that the god number is $11$ to get to a mate.
In the right case, there is an obvious mate in 1. The god number is $1$. 
}
\end{figure}

\vfill
\pagebreak

\section{Transposition games}

\paragraph{}
In this section we describe a class of solitaire games that come from 
{\bf finitely presented groups} that are generated by {\bf involutions},
and so are {\bf Coxeter group}. In this case it is in general the symmetric group. 
We are not interested so much in the group but in the 
{\bf Cayley graph} of the group generated by a set of transpositions. 
Finding its diameter and so the god number gives an answer to a 
{\bf topological sorting problem}. 

\paragraph
The set of transpositions that generate the subgroup $G$ are defined by the
{\bf transposition graph }
$\Gamma$ which is a finite graph on the vertex set $V=\{ 1, \dots, n\}$, 
the edges being the generators. he problem is to find the god number of the game
graph defined by a connected 
graph $\Gamma$ with vertex set $V$. 
The reason to restrict to connected graphs is that the god number of of the disjoint union 
$\Gamma=\Gamma_1 \cup \Gamma_2$ of two graphs is the sum of the god numbers of the components. 
\index{presentations}
\index{finitely presented groups}
\index{transposition graph}
\index{Coxeter group}

\index{Cayley graph}
Rather than using the language of {\bf presentations}, where the group elements 
are words, we use the {\bf permutation representation} and work in $S_n$, 
the permutation group of $n$ elements. 

\paragraph{}
It is easy to see by induction that the {\bf transposition graph} $\Gamma$ is connected 
if and only if the generated group is $S_n$ and that if $\Gamma$ is the disjoint union
of $m$ graphs $\Gamma_k$, $k=1,\dots,m$  with vertex cardinality $n_k$, then the group $G$
is the Cartesian product $\Gamma_{n_1} \times \cdots \times \Gamma_{n_m}$. 
This is known as the {\bf "connected transposition graph criterion"}.
From the introduction to \cite{Ganesan2013} p 2483: {\it the folklore fact that
$\Gamma$ is connected is equivalent to $G=S_n$.} See also \cite{Kornhauser}. Since we did not find
the statement explicitly proven anywhere (it might be too simple to deserve this), here is the proof. 
\index{Cayley graph}
\index{transposition graph}
\index{connected transposition graph criterion}
\index{Folklore theorem on transposition games}

\begin{thm}[Folklore result on transposition game graphs]
Let $\Gamma=(V,E)$ be a finite simple connected graph with $n=|V|$ vertices.
The edges $(a,b) \in E$ define transpositions in the permutation group $S_n$.
The subgroup of $S_n$ generated by these transpositions is $S_n$. 
\end{thm}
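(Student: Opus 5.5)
We reduce everything to the classical fact that $S_n$ is generated by all transpositions $(a\,b)$, $1 \le a < b \le n$. That fact holds because every permutation decomposes into disjoint cycles and each cycle factors as $(a_1\,a_2\,\dots\,a_k) = (a_1\,a_k)(a_1\,a_{k-1})\cdots(a_1\,a_2)$. So let $G$ be the subgroup generated by the transpositions $(a\,b)$ with $(a,b) \in E$. It suffices to show that every transposition $(x\,y)$ with $x \neq y$ in $V$ lies in $G$.

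The key step is a conjugation identity. If $a,b,c$ are distinct, then
$$ (a\,b)(b\,c)(a\,b) = (a\,c) . $$
This is a one-line check: conjugating by $(a\,b)$ relabels the entries of $(b\,c)$, sending $b \mapsto a$ and fixing $c$.

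Now fix $x \neq y$. Since $\Gamma$ is connected, there is a path $x=v_0,v_1,\dots,v_k=y$ in $\Gamma$, and we may take it simple, for example a shortest path. Hence the $v_i$ are pairwise distinct. We prove by induction on $j$ that $(v_0\,v_j) \in G$ for $j=1,\dots,k$. The case $j=1$ holds because $(v_0,v_1)$ is an edge. For the inductive step we have $(v_0\,v_j) \in G$ by hypothesis and $(v_j\,v_{j+1}) \in G$ because it is an edge. Since $v_0,v_j,v_{j+1}$ are distinct, the identity gives
$$ (v_0\,v_{j+1}) = (v_j\,v_0)(v_j\,v_{j+1})(v_j\,v_0) = (v_0\,v_j)(v_j\,v_{j+1})(v_0\,v_j) \in G . $$
At $j=k$ this yields $(x\,y) \in G$. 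Therefore $G$ contains all transpositions, and so $G = S_n$.

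No step here is a genuine obstacle. The only points needing care are that the path must be simple, so the three indices in the conjugation identity are distinct, and the convention for composing permutations. The identity is symmetric enough to hold under either composition order, so no case split is needed. For completeness we would also note the easy converse, which is used in the surrounding text. If $\Gamma$ is disconnected with a component $C \neq V$, every generator preserves $C$, so $G$ preserves $C$ and $G \neq S_n$.
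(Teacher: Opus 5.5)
Your proof is correct, but it takes a different route from the paper. The paper restricts to a spanning tree $\Delta$ and inducts on $n$. It deletes a leaf $b$ with neighbor $a$ and uses the hypothesis that $\Delta\setminus b$ generates all permutations of the remaining vertices. It then sorts an arbitrary arrangement by bringing the label that belongs at $b$ to $a$, applying $(a\,b)$, and finishing inside $\Delta\setminus b$. You never induct on the graph. Instead you show directly, by conjugating along a simple path, that every transposition $(x\,y)$ is generated, and then quote the classical fact that transpositions generate $S_n$.

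Your approach avoids tracking which label sits at which vertex, which the paper's case analysis must do. It also yields explicit words, and hence a quantitative bound on the god number, provided you make one adjustment. As you wrote the step, $(v_0\,v_{j+1})=(v_0\,v_j)(v_j\,v_{j+1})(v_0\,v_j)$, the word length doubles at each stage, so $(v_0\,v_k)$ costs $2^k-1$ generators. If you instead conjugate the long transposition by the edge, $(v_0\,v_{j+1})=(v_j\,v_{j+1})(v_0\,v_j)(v_j\,v_{j+1})$, then $(x\,y)$ costs $2d_\Gamma(x,y)-1$ generators. Since every permutation is a product of at most $n-1$ transpositions, this gives $\mathrm{God}\le (n-1)(2\,\mathrm{diam}(\Gamma)-1)$ for the transposition game. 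This bound is sharp for $K_n$, and it is exactly the three-move swap through the center that the paper invokes for star and wheel graphs.

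The paper's leaf-peeling argument has its own advantages. It is self-contained, since no cycle decomposition is needed. It makes explicit that the $n-1$ edges of a spanning tree already suffice. And it is literally a sorting algorithm for the puzzle: moving the correct label straight to the leaf before recursing reproduces bubble sort on a path.
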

\begin{proof} 
If $\Delta$ is a subgraph of $\Gamma$, it generates a subgroup as less generators
are used. Let $\Delta$ be a spanning tree in $\Gamma$. It still has $n$ vertices, but
now it has $(n-1)$ edges. We made the problem seemingly harder by restricting the permutations
to a smaller set of generators and show that $\Delta$ already generates $S_n$. \\
We use induction with respect to $n$. For $n=1$, there is no
edge and $S_1$ is the trivial group. This is the induction foundation. As for
the induction step, assume that the statement is true for all trees with $n-1$
vertices. Take a tree $\Delta$ with $n$ vertices and pick a leaf $b$, 
(a vertex with vertex degree $1$). It defines a unique edge $(a,b)$ and so a flip $(a,b)$.
Since the tree $\Delta \setminus b$ has $n-1$ vertices, its edges
generate the group $S_{n-1}$ by the induction assumption. 
Take an arbitrary permutation $\pi \in S_n$. (Case 1): If $\pi_n=n$, we can use permutations 
in $\Delta \setminus b$ to order it to $(1,2,\dots, n)$.  
(Case 2): If $\pi_n<n$, assume that $n$ has label $j<n$ so that it is
$\pi=(\pi_1, \dots, \pi_{n-1},j)$, where one of the labels $\pi_j$ is $n$. 
Use permutations from edges in $\Delta \setminus b$ to rearrange this to 
$(1, \dots, n, j)$. Now apply the new flip $(a,b)$ allowed in the full tree $\Delta$ to get 
to $(1, \dots, j, n)$. Again use permutations from edges in $\Delta \setminus b$ to 
do the final adjustment reaching the solved state $(1,2, \dots, n-1,n)$. 
\end{proof} 
\index{leaf}
\index{flip}

\paragraph{}
The Cayley graph $G$ has a natural {\bf geodesic metric} $d(x,y)$ on the graph $G$. 
The god number is the diameter of $G$ with respect to this metric. Note that in 
general, the Cayley graph is a directed graph. It is often assumed however that
if $a$ is a generator in a group, then also $a^{-1}$ is a generator. It is important
to specify this when looking at god numbers. The {\bf half turn metric} on the Rubik
cube for example, allows to turn any face by any angle $90,180,270$. The 
{\bf quarter turn metric} only allows to use turns by $90$ or $270$ degrees.
There is also the {\bf slice turn metric} in which on can turn any layer by any
angle. The god number in the slice turn metric is either 18,19 or 20. The God
number in the quarter turn metric is $26$. The God number in the most commonly 
used half turn metric is $20$. 
\index{geodesic metric}

\paragraph{}
{\bf Breadth-First Search} (BFS) generates the graph and then determine its
god's number. The graph can be built up by building {\bf neighborhood 
graphs} or ball $B_r(v)$ of radius $r$. In a Cayley graph, this can be used to 
find the diameter of the Cayley graph, the distance between a two points
$(a,b)$ is the same than the distance to the points $ca,cb$, where $c$ is an
other group element. To talk in the language of {\bf differential geometry}, Cayley 
graphs are {\bf objects of constant diameter}, similarly as curves of constant
width like the filled in {\bf Reuleux triangle}, which is an object for which 
the diameter can be computed from any point. 
\index{breadth first search} 
\index{half turn metric}
\index{ball of radius $r$}
\index{BFS}
\index{differential geometry}
\index{Reuleux triangle}

\paragraph{}
We can define an other metric on $G$
which is in general different but allows to estimate the god number without
generating the entire graph. This distance depends on the graph $\Gamma$:
If  $M$ be the {\bf distance matrix} of the connected $\Gamma$, 
define the {\bf Gamma distance} on the symmetric group $G$ as 
$d_{\Gamma}(x,y) = [\sum_{k=1}^n M_{x(k),y(k)}/2]$.
\index{distance matrix}

\paragraph{}
$d_{\Gamma}$ is a metric: it is symmetric, commutative and satisfies the triangle inequality. 
It is also a lower bound for the geodesic metric
$d_{\Gamma}(x,y) \leq d(x,y)$. But it is in general smaller. If $\Gamma$ is the 
{\bf octahedron} graph $\Gamma= K_{2,2,2}$ for example, then the god number is $7$, while the 
Hausdorff Gamma distance is $6$. Even simpler is $\Gamma=K_3$, where the graph diameter of $G$
is $2$, while the $d_{\Gamma}$ the Hausdorff distance is $1$. 

\paragraph{}
One of the simplest cases is the triangle $K_3$ (complete graph of 3 elements), where we have $6$
possibilities. $\langle a=(1,2), b= (2,3), c=(1,3)  | a^2=b^2=c^2=(ab)^3=(ac)^3=(bc)^2 \rangle$.
This puzzle can be solved in $3$ or less cases. The hardest case is to translate 
$v=(3,2,1)$ to $a=(1,2,3) \in A$. 
The shortest path is $(3,2,1) \to (2,3,1) \to (2,1,3)  \to (1,2,3)$. 

\paragraph{}
It is an interesting question to relate the topology of $\Gamma$ with the god number.
The diameter plays a role. We can for example ask, in which cases we can use 
{\bf spanning tree} in $\Gamma$ and get the same god number. An example we studied a bit
more detailed is whether the star graph produces the same god number than the wheel graph. 
A bold question to ask is whether for large enough graphs $\Gamma$, it is always possible to find
a spanning tree in $\Gamma$ on which the god number is the same.  This is of interest in 
logistic sorting problems as it is easier to organize a sorting mechanism along a spanning 
tree.
\index{spanning tree}

\paragraph{}
Here are some small examples. Denote by $\lfloor x \rfloor$ the floor of $x$. 
It is the largest integer smaller or equal than $x$. 

\begin{center}
\begin{tabular}{|l|l|l|}   \hline
Graph  $G$                 &   {\rm God}(G)   \\ \hline \hline
a) $K_n$  complete graph   &   $n-1$.  \\
b) $P_n$  path graph       &   $n(n+1)/2$.  \\
c) $C_n$  cycle graph      &   $\lfloor (n^2+1)/4 \rfloor$ \cite{AdinAlonRoichman} \\
d) $S_n$  star graph       &   $\lfloor 3n/2 \rfloor$ sketched in \cite{AkersKrishnamurthy}\\
e) $W_n$  wheel graph      &   between $n$ and $\lfloor 3n/2 \rfloor$.  \\ \hline
\end{tabular}
\end{center} 

\paragraph{}
Our proof of the circular c) appears to be slightly simpler than what can be seen in \cite{AdinAlonRoichman}.
Together with \cite{ZuylenBieronSchielekampYu2014} it produces the result in \cite{AdinAlonRoichman}. 
The paper \cite{AkersKrishnamurthy} for the star graph gives an upper bound.

\begin{proof}
a) Induction. If an $(n+1)$'th point is added, we just need one more transposition. \\
b) This is bubble sort. The worst case is to sort the reversed string, as it needs
   $n-1 + n-2 + n-3  + \cdots + 1 = n(n+1)/2$ swaps. \\
c) This is the best studied case. The graph distance is $m^2$ if $n=2m$
and $m(m-1)$ if $n=2m+1$. The reason is that we have to move distance $m$
maximally for $m$ vertices. A full proof is in \cite{AdinAlonRoichman}.
See also \cite{ZuylenBieronSchielekampYu2014} for the upper bound. 
As for the lower bound a concrete example is
$\pi(k) = k + \lfloor \frac{n}{2} \rfloor \; {\rm mod} \; n$, which has
distance $d(\pi,\iota)= \lfloor\frac{n^2+1}{4} \rfloor$ from the identity $\iota(k)=k$.
Proof: If an edge $e$ in $\Gamma=C_n$ is removed, it gives a path graph $P_n$. Label its vertices $\{1, \dots, n\}$.
For any permutation $\sigma$, define $P_{e,\sigma}(x)$ to be the position of $x$ in $\{1,\dots,n\}$
that $x$ occupies in the order determined by our cut $e$. Note that $P_{e,\iota}(x)=x$. Define
$$ F_e(\sigma):= \sum_{x=1}^n |P_{e,\sigma}(x)-x| \; . $$
We have $F_e(\iota)=0$. If d($\sigma',\sigma)=1$ then $|F_e(\sigma')-F_e(\sigma)| \leq 2$ because
a transposition exchanges the positions of exactly two labels, say $x$ and $y$.
Therefore, if $(d(\sigma,\iota)=t$, then $F_e(\sigma)=|F_e(\sigma)-F_e(\mathrm{id})| \le 2t$.
It follows that $d(\sigma,\iota)  \geq \frac{F_e(\sigma)}{2}$.  \\
Case (i) In the case $n=2m$, chose the edge $e$ between $m$ and $m+1$ so that $P_{e,\pi}(x)=x-m$.   
 This gives $F_e(\pi)=\sum_{x=1}^{2m} |P_{e,\pi}(x)-x|=\sum_{x=1}^{2m} m= 2m^2$ and so
$d(\pi_{2m},\iota) \geq \frac{F_e(\pi)}{2}=m^2 = \lfloor \frac{(2n)^2+1}{4}$.  \\
Case (ii) in the case $n=2m+1$, let $e$ be the edge between $m+1$ and $m+2$. The labels $1,\dots,m$ are 
shifted to the right by $m+1$ steps, and the labels $m+1,\dots,2m+1$ are shifted to the left by $m$ steps.
We have $F_e(\pi_{2m+1})=\sum_{x=1}^{m} (m+1) +\sum_{x=m+1}^{2m+1} m= m(m+1)+(m+1)m=2m(m+1)$ so that
$d(\pi_{2m+1},\iota) \geq \frac{F_e(\pi_{2m+1})}{2}=m(m+1)=\lfloor \frac{(2m+1)^2+1}{4} \rfloor$. \\
In the circle, where we deal with the integer sequence A002620. \\
d) For the star graph, an upper bound is covered by the wheel case. The star graph 
appears in \cite{AkersKrishnamurthy} (Theorem 7). \\
e) This is covered in two separate lemmas. It follows from d) as a wheel graph has
more edges than a star graph. The lower bound follows because it is contained in the
complete graph. 
\end{proof} 

\paragraph{}
The next two lemmas establish also part e) in the table. The star graph is a subgraph so that the god number
known since \cite{AkersKrishnamurthy} is larger or equal than in the wheel case. 
We see in experiments that the difference between star and wheel case is maximally $1$ and
that for larger $n$, the wheel and star graph have the same god number. (There is also a more detailed numerical 
table in the code section.)
It is an interesting question, whether $\mathrm{God}(W_{n+1}) = \mathrm{God}(S_{n+1})$ for large enough $n$. 
Of course $\mathrm{God}(W_{n+1}) \leq \mathrm{God}(S_{n+1})$ because we can on the wheel graph also use swaps on 
the rim. It seems however that these {\bf rim swaps} can be avoided for large $n$ (this is an open question 
equaivalent to the above question whether the god numbers of the star and wheel graph agree for larger $n$.
\index{rim swap}
\index{Wheel-Star problem}

\paragraph{}
We do not know an exact formula for the god number of $W_{n}$. A natural conjecture is that it is the same than 
for the star graph for larger $n$. 
A trivial estimate is ${\rm God}(W_{n+1}) \leq {\rm God}(S_{n+1} \leq 3(n-1)$ as three moves
allow to switch any two points in the boundary keeping the center invariant. The additional "parking position" at 
the center renders $O(n^2)$ complexity sorting on the circular graph (or path graph) to a linear $O(n)$ 
complexity. 

\begin{lemma}[Wheel upper bound]
$\mathrm{God}(W_{n+1}) \leq \lfloor\frac{3n}{2} \rfloor$
\end{lemma}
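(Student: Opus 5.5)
The plan is to prove the bound by exhibiting an explicit sorting procedure that uses only \emph{spoke} transpositions $(0,i)$, $i=1,\dots,n$, where $0$ denotes the hub and $1,\dots,n$ the rim vertices. The star graph $S_{n+1}$ is a spanning subgraph of $W_{n+1}$, so every word in spoke transpositions is a legal path in the Cayley graph of $W_{n+1}$. It is therefore enough to show that every permutation $\pi\in S_{n+1}$ can be sorted to the identity with at most $\lfloor 3n/2\rfloor$ spoke swaps. Rim swaps will never be used; they could only shorten paths.

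The key step is a cycle-by-cycle count. First decompose $\pi$, viewed as an arrangement of labels on vertices, into disjoint cycles, and discard the fixed points. Next handle the cycle $C_0$ containing the hub, of length $\ell_0\ge 1$. Repeatedly swap the hub with the vertex that is the correct home of the label currently sitting at the hub. Each such swap puts one label into its final position, so after $\ell_0-1$ swaps the whole cycle is sorted and the hub holds label $0$. Then treat every remaining cycle $C$ of length $\ell\ge 2$ that avoids the hub. One swap moves label $0$ into $C$ and merges $C$ with the hub cycle into a cycle of length $\ell+1$. This is resolved with $\ell$ further swaps, as in the previous step, and returns $0$ to the hub. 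So $C$ costs $\ell+1$ swaps. The total is
\[
N(\pi) = (\ell_0-1) + \sum_{C\not\ni 0,\ \ell(C)\ge2} \bigl(\ell(C)+1\bigr).
\]
This is the classical Akers--Krishnamurthy count for the star graph, which the table already cites for part d).

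What remains is to maximize $N(\pi)$ over $\pi$. Let $m$ be the number of rim points that are moved by $\pi$ and do not lie in $C_0$, so $m\le n-(\ell_0-1)$. Each nontrivial cycle avoiding the hub has length at least $2$, hence the number $k$ of such cycles satisfies $k\le \lfloor m/2\rfloor$. Therefore
\[
N(\pi) \le \ell_0-1+m+\lfloor m/2\rfloor .
\]
If the hub is fixed ($\ell_0=1$), this gives $N\le n+\lfloor n/2\rfloor=\lfloor 3n/2\rfloor$, with equality when the rim is a product of disjoint $2$-cycles. If $\ell_0\ge 2$, set $j=\ell_0-1\ge1$ and $m\le n-j$. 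Then $N\le j+(n-j)+\lfloor (n-j)/2\rfloor\le n+\lfloor n/2\rfloor$, since the bound only decreases as $j$ grows. This yields the lemma.

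The main obstacle is making the merging step precise. The worry is that a swap with a vertex of $C$ might not increase the hub cycle length by exactly $\ell$, or that the resolution after merging might need more than $\ell$ swaps. I would check this with the standard rule that multiplying by a transposition of two elements lying in different cycles merges those cycles, while a transposition within one cycle splits it. It is also important to confirm that sorting a cycle that contains the hub takes exactly (length $-1$) spoke swaps. With those two facts established, the arithmetic in the previous paragraph is routine, including the parity of $n$ absorbed by the floor.
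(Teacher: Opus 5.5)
Your proposal is correct and is essentially the paper's argument. Both sort using only spoke transpositions, charge $\ell_0-1$ swaps for the cycle through the hub and $\ell+1$ swaps for each nontrivial rim cycle of length $\ell$, and bound the number of rim cycles by $\lfloor n/2\rfloor$ because each has length at least $2$. Your version is slightly more careful than the paper's: you explicitly discard fixed points and check the merge step, which the paper leaves implicit.
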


\begin{proof}
Let $c$ be the center of the wheel. Denote by $r_1,\dots,r_n$ the {\bf rim vertices} of $W_{n+1}$. 
Because $W_{n+1}$ is connected, any permutation $\pi \in S_{n+1}$ is a product of transpositions 
$\pi_1 \cdots \pi_\ell$. Then either $\pi_i(c) \neq c$ (which means that we use transpositions on the star) 
or then $\pi_i(c)=c$ (which means that we use transpositions on the rim). In the first case, $\pi_i$ 
is of the form $\pi_i=(c,a_1,a_2,\dots,a_k)$, where each $a_j$ is some $r_i$. We have the factorization
$$ (c,a_1,a_2,\dots,a_k)= \underbrace{(c\,a_k)(c\,a_{k-1})\cdots(c\,a_1)}_{k \ \text{transpositions}} \; . $$
A cycle of length $k+1$ that contains $c$ can be reached in $k$ moves on the star. 
In the second case, $\pi_i$ is of the form $\pi_i=(a_1\,a_2\,\dots,a_k)$,
where all $a_j$ are some $r_i$. Then we have the factorization
$$ (a_1\,a_2\,\dots,a_k)=\underbrace{(c\,a_1)(c\,a_k)(c\,a_{k-1})\cdots(c\,a_2)(c\,a_1)}_{k+1 \ \text{transpositions}} $$
So a cycle of length $k$ that does not contain $c$ can be reached in $k+1$ moves on the star. \\
Define $\Pi:=\{\pi_i \mid 1\le i \le \ell\}$ to be the set of our disjoint cycles. Then 
$$\Pi=\underbrace{\{\pi_j\mid \pi_j(c)=c\}}_{:=A}\sqcup \underbrace{\{\pi_j\mid \pi_j(c)\neq c\}}_{:=B}$$ 
By disjointedness of the $\pi_i$, $|B|=1$ or $|B|=0$. 
Let $|A|=\beta$ and let the unique $\tau\in B$ (if at all) be $\tau=(c\,a_1\,\dots,a_m).$
Let the cycles in $A$ have lengths $k_1,\dots,k_\beta.$
Since all the cycles are disjoint and $\pi$ permutes at most $n$ vertices, we have 
$m+\sum_{i=1}^\beta k_i \leq n$. By the above, we can achieve $\pi$ in at most
$$m+\sum_{i=1}^\beta (k_i+1)=m+\sum_{i=1}^\beta k_i + \beta\le n+\beta$$
moves. Now observe that each $\sigma \in A$ has length at least $2$, so
$2\beta\le \sum_{i=1}^\beta k_i \le n\implies \beta\le \lfloor\frac n2 \rfloor$.
Thus we see that 
$d(\pi,\mathrm{id})\le n+\beta\le n+\lfloor\frac n2 \rfloor=\lfloor\frac{3n}{2} \rfloor$ 
if and only if  $\mathrm{God}(W_{n+1}) \le n+\lfloor\frac{n}{2} \rfloor$.
\end{proof}

\paragraph{}
Now lets turn to the lower bound, where need to give a specific configuration.

\begin{lemma}[Wheel lower bound]
$\mathrm{God}(W_{n+1}) \geq n$
\end{lemma}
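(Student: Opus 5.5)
The plan is to exhibit one permutation $\pi \in S_{n+1}$ whose distance from the identity in the Cayley graph generated by the edges of $W_{n+1}$ is at least $n$. The natural tool is the standard lower bound for any generating set of transpositions. If $\pi$ has $c(\pi)$ cycles (fixed points included), then multiplying by a transposition changes the number of cycles by exactly $\pm 1$. Since the identity has $n+1$ cycles, any word in transpositions representing $\pi$ has length at least $(n+1)-c(\pi)$. This holds for all transpositions, so in particular for the wheel generators, that is, for the complete graph $K_{n+1}$ containing $W_{n+1}$. This is consistent with the remark in part e) that the lower bound comes from comparison with the complete graph.

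Concretely, I take $\pi$ to be a single $(n+1)$-cycle, for example $\pi=(c\,r_1\,r_2\,\cdots\,r_n)$. Then $c(\pi)=1$, so $d(\pi,\mathrm{id}) \geq n$, and hence $\mathrm{God}(W_{n+1}) = \max_{x} d(x,\mathrm{id}) \geq n$. Because the Cayley graph is vertex transitive (the remark on ``objects of constant diameter''), the diameter equals the maximal distance from the identity, so nothing more is needed.

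The order of steps is as follows.
\begin{enumerate}
\item Prove the cycle-count lemma. If $\sigma$ is a permutation and $\tau=(a\,b)$, then $c(\sigma\tau)=c(\sigma)\pm 1$, according to whether $a,b$ lie in the same cycle of $\sigma$ (a split) or in different cycles (a merge).
\item Deduce by induction on word length that a product of $\ell$ transpositions has at least $n+1-\ell$ cycles.
\item Apply this to the $(n+1)$-cycle to get $\ell \geq n$.
\end{enumerate}

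The only real work is step 1, a short case check: write the cycle containing $a$ and $b$ explicitly and compose. Everything else is bookkeeping. I would also note that this bound is not sharp compared with the upper bound $\lfloor 3n/2 \rfloor$ of the previous lemma. Sharpening it would require permutations with many rim cycles avoiding $c$, each costing an extra center move. That refinement is not needed for the stated inequality.
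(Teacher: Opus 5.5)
Your proof is correct, but it takes a different route from the paper's proof of this lemma.

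\textbf{The paper's argument.} It uses a geometric potential: $D(\sigma)=\sum_x d_{W_{n+1}}(P_\sigma(x),x)$, the total graph distance of the labels from their home vertices. A single edge swap changes $D$ by at most $2$, so $d(\sigma,\iota)\ge D(\sigma)/2$. The paper evaluates this on the rim rotation $\pi(c)=c$, $\pi(r_i)=r_{i+m}$ with $m=\lfloor n/2\rfloor$. For $n\ge 4$ every rim label lands on a non-adjacent rim vertex, so $D(\pi)=2n$.

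\textbf{Your argument.} You use algebraic cycle counting instead. This amounts to $d_{W_{n+1}}\ge d_{K_{n+1}}$ together with $\mathrm{God}(K_{n+1})=n$, which the paper mentions only in passing, in its justification of item e) of the table.

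\textbf{What your route buys.} It uses nothing about the wheel, so it actually proves $\mathrm{God}(\Gamma)\ge |V(\Gamma)|-1$ for every connected $\Gamma$. It also covers $n=3$. There $W_4=K_4$ and all rim vertices are adjacent, so the paper's computation only gives $D(\pi)=3$ and its argument as written needs $n\ge 4$.

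\textbf{What the paper's route buys.} The displacement potential is sensitive to the metric of $\Gamma$. The same technique is what the paper uses for the quadratic lower bound on $C_n$. Cycle counting can never exceed $|V(\Gamma)|-1$.

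\textbf{Both saturate at $n$ on the wheel.} Cycle counting gives at most $(n+1)-1=n$. For the potential, all distances in $W_{n+1}$ are at most $2$, and those involving $c$ are at most $1$. One checks that $D(\sigma)\le 2n$ for every $\sigma$, so it too gives at most $n$.

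\textbf{Each bound is weak on the other's test permutation.} Your $(n+1)$-cycle moves every label to an adjacent vertex, so $D=n+1$. For even $n$, the paper's $\pi$ is a product of $n/2$ disjoint rim transpositions, whose cycle bound is only $n/2$. The latter, having many rim cycles avoiding $c$, is exactly the kind of configuration you point to as the candidate for pushing the lower bound toward $\lfloor 3n/2\rfloor$. Doing so would require an argument combining both ideas.
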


\begin{proof}
Similarly as in the upper bound of the circular case, we give a
permutation $\pi \in S_{n+1}$ such that 
$d(\pi,\iota) \geq n$. For any $\sigma \in S_{n+1}$, 
we define $P_{\sigma}(x)$ to be the vertex of $W_{n+1}$ that $x$ occupies. Define
$$ D(\sigma)=\sum_{x\in\{c,r_1,\dots,r_n\}} |P_{\sigma}(x)-x| \; . $$
We observe that a single legal move swaps the labels on the endpoints of some edge, 
and each of them moves by 1. Thus their distance from home can decrease by at most $1$, so $D$ 
can decrease by at most $2$ in a single move. Therefore, if we require a minimum of $t$ 
moves to take $\sigma$ to $\iota$, then $0=D(\iota)\ge D(\sigma)-2t \implies t \ge \frac{D(\sigma)}{2}$.
For $m=\lfloor \frac{n}{2} \rfloor$ define $\pi\in S_{n+1}$ by
$\pi(c)=c,\ \pi(r_i)=r_{i+m}$, where we work $\mod \; n$. For $n \geq 4$, 
$r_{i+m}$ is not adjacent to $r_i$ on the rim, so $d(r_i,r_{i+m})=2$,
(where we traverse the center for shortest distance). Since $\pi(c)=c$, we have
$$  D(\pi)=\sum_{i=1}^n d(r_{i+m},r_i)=\sum_{i=1}^n 2=2n \; . $$
\end{proof}

\paragraph{}
The question which game graphs of transposition games 
are Hamiltonian is related to the 
{\bf Lovasz vertex-transitive graph conjecture}. We noticed experimentally that all 
game graphs from transposition games
have the Hamiltonian graph property.
For the octahedron $\Gamma=K_{2,2,2}$ for example we have $|V|=720, |E|=4320$. 
It is a Hamiltonian graph.  This is indeed a known result from 1975: 
\cite{KompelmakherLiskovets1975}. We call it the {\bf transposition game theorem:}
\index{Lovaz conjecture}
\index{octahedron}
\index{transposition game theorem}

\begin{thm}
Every connected finite graph $\Gamma$ defines a transposition game for which 
the game graph is Hamiltonian.
\end{thm}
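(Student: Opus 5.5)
Here is the plan: reduce to a spanning tree, then prove the tree case by induction on $n$. This is the same strategy as the Folklore result on transposition game graphs. The game graph is the Cayley graph $\mathrm{Cay}(S_n,T)$, where $T$ is the set of transpositions given by the edges of $\Gamma$. If $\Delta$ is a spanning tree of $\Gamma$, then by that Folklore result $\mathrm{Cay}(S_n,T_\Delta)$ is a connected spanning subgraph of $\mathrm{Cay}(S_n,T)$ on the same vertex set $S_n$. A Hamiltonian cycle in the smaller graph is therefore one in the larger graph, so it suffices to treat trees. For $n\le 2$ the statement is degenerate: $S_2$ gives a single edge, which is a Hamiltonian path, and for $n\ge 3$ the graph is bipartite of even order. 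I will aim for a Hamiltonian cycle for $n\ge 3$, since a cycle is what the induction needs.

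Induction step. Pick a leaf $b$ of $\Delta$ with neighbour $a$, and let $T'$ be the transpositions of $\Delta\setminus b$. Partition $S_n$ into the $n$ classes $C_j=\{\pi : \pi^{-1}(b)=j\}$, according to the position that the label $b$ occupies; I use the convention in which generators act on positions, so that right multiplication permutes positions. Each class is a coset of a copy of $S_{n-1}$. Inside each class, the $T'$-edges that do not move the position of $b$ form a copy of $\mathrm{Cay}(S_{n-1},T'')$, where $T''$ comes from a tree on $n-1$ vertices. By induction each class carries a Hamiltonian cycle $Z_j$. The quotient structure, that is, which classes are joined by edges of $\Delta$, is again the tree $\Delta$ itself, acting on the position of $b$. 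I then glue the cycles $Z_j$ along a closed walk in that tree, in the standard Rankin/Kompel'makher--Liskovets manner. For two adjacent classes $C_j, C_k$ joined by the transposition $s=(j\,k)$, I find an edge $xy$ of $Z_j$ such that $xs, ys$ is an edge of $Z_k$. These four vertices form a $4$-cycle, because $s$ commutes with the generator on $xy$ when that generator is disjoint from $\{j,k\}$. Deleting $xy$ and $xs\,ys$ and adding $x\,xs$ and $y\,ys$ merges the two cycles into one. I do this along the edges of the tree of classes.

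The main obstacle is choosing the merging edges consistently. The $4$-cycles used for distinct tree edges must be pairwise disjoint, and each must use a generator commuting with $s$. In addition, after one merge, the edge needed for the next merge must still lie on the current cycle. To handle this, I will strengthen the induction hypothesis in the style of Kompel'makher--Liskovets. The stronger statement is: for every tree with $n\ge3$ vertices and every generator $t$, there is a Hamiltonian cycle of $\mathrm{Cay}(S_n,T)$ that uses many $t$-edges, for instance containing $t$-edges at prescribed disjoint positions. Then in $Z_j$ I pick the rung edges to be $t$-edges for a $t$ disjoint from $(j\,k)$. A tree on at least four vertices always has an edge disjoint from a given edge, or at least one that commutes with it; small cases ($n=3$ the path, $n=4$ star and path) are checked by hand. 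If the bookkeeping becomes unwieldy, I fall back on citing \cite{KompelmakherLiskovets1975} directly for the general statement, with the reduction to trees above as the paper's own contribution.
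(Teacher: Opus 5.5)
The reduction to a spanning tree $\Delta$ is correct, but the induction step breaks at the decomposition. With the convention you fix (right multiplication by $(p\,q)$ swaps the entries in positions $p,q$), a permutation $\pi\in C_j=\{\pi:\pi^{-1}(b)=j\}$ stays in $C_j$ under $\pi\mapsto\pi(p\,q)$ exactly when $j\notin\{p,q\}$. So the edges inside $C_j$ come from the transpositions of the forest $\Delta-j$, not from a tree on $n-1$ vertices. Whenever $j$ is an internal vertex of $\Delta$, these transpositions generate only a proper Young subgroup of $\mathrm{Stab}(j)\cong S_{n-1}$. The subgraph induced on $C_j$ is then disconnected, and the induction hypothesis cannot produce a Hamiltonian cycle $Z_j$. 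Already for the path $1-2-3-4$ with $b=1$, the class $C_2$ contains only $(3\,4)$-edges, so it is a perfect matching on six vertices.

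Your two claims, that each class is a copy of $\mathrm{Cay}(S_{n-1},\text{tree})$ and that the quotient is $\Delta$, belong to two different partitions. The partition that works is by the label sitting at the leaf position, $D_\ell=\{\pi:\pi(b)=\ell\}$. Every $t\in T'$ preserves $D_\ell$, and each $D_\ell$ induces a copy of $\mathrm{Cay}(S_{n-1},T')$. The only edges between classes are the $(a\,b)$-edges, $\pi\in D_{\pi(b)}\mapsto \pi(a\,b)\in D_{\pi(a)}$, so the quotient is the complete graph $K_n$, not $\Delta$.

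Even with the right partition, the merging step is where all the work lies, and your proposal does not supply it. The square $\pi,\pi t,\pi t(a\,b),\pi(a\,b)$ exists only if $t\in T'$ avoids $a$. Your assertion that a tree on at least four vertices always has an edge commuting with a given one is false for the star $K_{1,n-1}$: any two of its edges share the centre, and distinct transpositions commute only when they are disjoint. Stars cannot be avoided. When $\Gamma$ is itself a star, every spanning tree is that star, and stars also appear as $\Delta\setminus b$ in the recursion. They therefore need a separate gluing, for instance via the hexagons coming from $((a\,b)(a\,c))^3=1$.

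The strengthened hypothesis is also left vague. You would have to guarantee $n-1$ pairwise disjoint squares, along a spanning tree of the quotient, whose two $t$-edges lie on $Z_\ell$ and on $Z_{\ell'}$ respectively. You would also have to show that each needed edge survives the earlier merges. That bookkeeping is precisely the content of the Kompel'makher--Liskovets argument.

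As written, the induction does not go through, and the proof rests entirely on your fallback citation. That citation is also all the paper does: it states the theorem and refers to \cite{KompelmakherLiskovets1975} without proof. Your spanning-tree reduction is therefore the only step you add, and it is valid.
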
 

\index{Lovasz conjecture}
\index{theorem of Kompelmakher-Liskovets}

\paragraph{}
We also looked at the size of the {\bf last layer} of complete graph puzzles.
These are the points furthest away from a point.
For $K_5$, the last layer has exactly $3$ configurations all of 
cycle type $(2, 2, 1)$ and all with the hole at vertex $0$.
For $K_6$, the last layer has $35$ configurations, split between 
cycle types $(2, 2, 2)$ (with $15$ representatives) and $(3, 2, 1)$ 
(with $20$ representatives), with hole positions distributed across all vertices.
We have code producing the BFS layer profile for any graph. 
\index{last layer problem}

\paragraph{}
Finally, lets look at a topological example. We take a discrete {\bf M\"obius strip}
$\Gamma_1$ and a discrete {\bf Cylinder strip} $\Gamma_1$. They both have 
$8$ vertices and $16$ edges and $8$ triangular faces; but the cylinder is 
oriented, while the M\"obius strip is not. We can now look at the 
game graphs $G_i$ of both. They are both finite graphs with 
$8!=40320$ vertices and $322560$ edges. But the God numbers are
different. For the M\"obius strip, we have ${\rm God}(G_1) = 10$. 
For the cylinder, we have ${\rm God}(G_2)=11$. In the code section we have both
mathematica and GAP procedures what establish these numbers independently. 

\begin{figure}[!htpb]
\scalebox{0.8}{\includegraphics{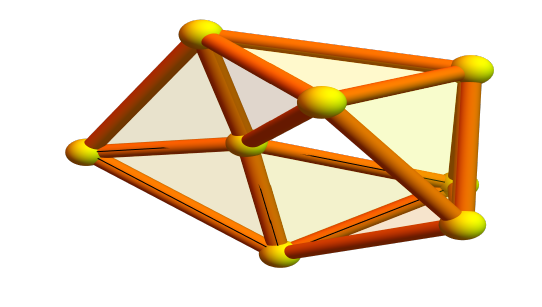}}
\scalebox{0.8}{\includegraphics{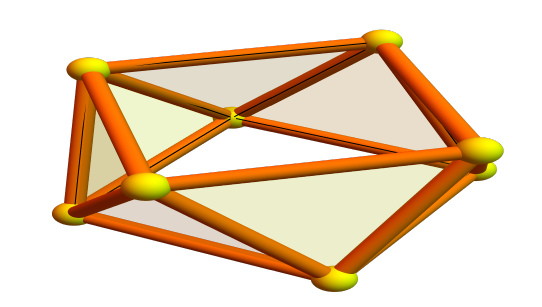}}
\label{moebius}
\caption{
The Moebius strip and Cylinder both generate game graphs $G_1,G_2$.
These two graphs are both Cayley graphs of $S_8$ and both have the same
constant vertex degree $16$, and so the same number of edges. 
But they have different diameter $10$ rsp. $11$. 
}
\end{figure}
\index{von Neumann Morgenstern axioms}

\vfill
\pagebreak

\section{Sliding puzzles}

\paragraph{}
{\bf Sliding puzzles} generalize the classic 15-puzzle $(\Gamma=C_4 \times C_4)$ 
or the {\bf magic ball} $\Gamma$ is the {\bf icosahedron graph}) to arbitrary connected graphs. 
Sliding puzzles for connected $\Gamma$ produce game graphs $G$ that are in general 
more interesting than the game graphs of transposition games, as the latter are always $S_n$. 
Wilson's theorem~\cite{wilson1974} classifies the graphs. 
\index{Wilson's theorem}
\index{sliding puzzles}
\index{icosahedron graph}

\paragraph{}
Given a graph $\Gamma=(V,E)$ on $n$ vertices, one of the tokens is called a
{\bf hole}. A legal move slides the hole into an adjacent position, opening the 
hole where it had been. From a node $v \in G$
only transpositions involving the hole can be used. The sorting is now localized near 
the hole. The set of all reachable permutations again produces a game graph $G$ whose
diameter is then the God number.
\index{hole}

\paragraph{}
The history of the 15-puzzle (where $\Gamma$ is the grid graph $C_4 \times C_4$) is iconic 
and well documented: after a precursor was shown around by {\bf Noyes Chapman} in 1874, he applied 
for a patent in 1880. A craze was driven by Sam Loyd's offer of $\$1000$ for swapping tiles 14 and 15. 
Johnson and Story~\cite{johnsonstory1879} demonstrated that a permutation parity is an invariant. 
implying that only half of the $16!$ positions are reachable.
Finding the god number of the general $n \times n$ version of the puzzle is an NP complete problem
\cite{RatnerWarmuth}. 
\index{history of 15-puzzle}
\index{15 puzzle craze}

\paragraph{}
Computational results for the $(n^2{-}1)$-puzzle proceeded with the development of
{\bf iterative deepening} in the context of {\bf pattern databases}. 
\cite{korf1985} introduced {\bf iterative-deepening}. \index{iterative deepening}
\cite{reinefeld1993} showed that the 8-puzzle's god's number is $31$. 
\cite{bmfn1999} established that less than 81 moves are necessary for the 15-puzzle in 1999 via parallel BFS on ZRAM, 
extending Gasser's earlier search that had found 9 states at depth 80. 
\cite{Korf2008} confirmed and extended such exhaustive implicit-search computations using disk-based breadth-first search
and stated 17 states at depth 80. 
\cite{korftaylor1993} built the first optimal solver for individual 24-puzzle instances.
The best bounds are $152 \le G \le 205$, the upper bound improved from 208 in 2016. 
\cite{ratnerwarmuth1990} proved that finding a shortest solution to the $(n^2{-}1)$-puzzle is NP-hard. 
\cite{parberry1995} gave a $\Theta(n^3)$ real-time algorithm and showed $\Theta(n^3)$ is also the growth rate of the 
God's number itself. The graph-theoretic generalisation is due to \cite{wilson1974}.
A modern textbook approach is \cite{archer1999}.
For a discussion of higher dimensional puzzles, see \cite{BMRV}.

\paragraph{}
Here is an overview of the God's number table for the $(n^2{-}1)$-puzzle with a single-tile metric:

\begin{center}
\small
\begin{tabular}{@{}l l r r r l@{}}
\toprule
Puzzle & Grid & \# states $(n^2)!/2$ & God's number & \# hardest states & Reference \\
\midrule
3-puzzle  & $2\times 2$ & $12$ & $6$ & $1$ (antipode on 12-cycle) & trivial \\
8-puzzle  & $3\times 3$ & $181{,}440$ & $31$ (24 multi-tile) & --- & \cite{reinefeld1993} \\
15-puzzle & $4\times 4$ & $1.046\times 10^{13}$ & $80$ (43 multi-tile) & $17$ & \cite{bmfn1999} \\
24-puzzle & $5\times 5$ & $7.76\times 10^{24}$ & $\in[152,\,205]$ & open & \cite{korftaylor1993} \\
35-puzzle & $6\times 6$ & $\approx 1.9\times 10^{41}$ & unknown & --- & --- \\
$(n^2{-}1)$-puzzle & $n\times n$ & $(n^2)!/2$ & $\Theta(n^3)$ & --- & \cite{parberry1995} \\
\bottomrule
\end{tabular}
\end{center}

\paragraph{}
There is also a {\bf multi-tile metric}, where consecutive blank moves in the 
same direction count as one. For the 15-puzzle, an upper bound is 43, for the  
8-puzzle it is 24  (OEIS A087725) gives the records $0,6,31,80,\dots$ for $n=1,2,3,4$.
Finding some solution is polynomial in $n$: {\bf Parberry's algorithm} uses 
$\Theta(n^3)$ moves~\cite{parberry1995}. Finding a {\bf shortest} solution is 
NP-hard~\cite{ratnerwarmuth1990}. It remains NP-hard to approximate within any additive constant, 
though a polynomial-time constant-factor approximation exists.
\index{Parberry algorithm}
\index{multi-tile metric}
\index{NP hard}

\paragraph{}
The game has a graph theoretical version which we discuss a bit more in an other section.
Let $\Gamma$ be a connected simple graph with $n$ vertices in which one of them is called ``blank" 
or ``hole". A legal move swaps the blank with an adjacent label. 
The resulting configuration graph $G(\Gamma)$ has one vertex per labeling and an edge per move.
The {\bf  eccentricity} of the solved vertex $w$ in its connected component is defined to be the maximal shortest
path distance from $v$ to any other vertex. 
\index{eccentricity}
\index{blank}
\index{hole}

\paragraph{}
The game graph $G$ generated by these moves depends only on $\Gamma$. 
Wilson's theorem~\cite{wilson1974} characterizes it completely. It is the analog
of the fact that for a connected graph the transposition game defined by a connected
graph $\Gamma$ on $n$ vertices is the symmetric group $G=S_n$. 
Here is the statement of {\bf Wilson's theorem on sliding graphs}:
\index{Wilson's theorem}

\begin{thm}[Wilson]
Let $G$ be a finite simple 2-connected graph with $n \ge 4$ vertices. 
The sliding-puzzle group $G$ for $\Gamma=C_n$ is $\mathbb{Z}_n$.
For $\Gamma = \theta_0$, the group is $\mathrm{PGL}_2(\mathbb{F}_5)$ of order 120.
If $\Gamma$ is bipartite but not $C_n$ nor $\theta_0$, it is the 
{\bf alternating group} $A_n$.
Otherwise, it is the full symmetric group $S_n$.
\end{thm}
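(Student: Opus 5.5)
The plan is to follow Wilson's original strategy and work with the \emph{puzzle group}. This is the group of permutations of the $n-1$ labelled tokens obtained by moving the hole along closed walks that start and end at a fixed vertex $h$. I read the statement this way: for $\Gamma=C_n$ the group is the cyclic group generated by one rotation of the tokens around the cycle. For $\theta_0$ the group is the copy of $\mathrm{PGL}_2(\mathbb{F}_5)\cong S_5$ acting on $6$ points. Otherwise the group is $A_{n-1}$ or $S_{n-1}$ on the tokens.

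The first step is the description of generators. Every closed walk of the hole based at $h$ is a product of ``lassos'': walk to a cycle, go once around it, and walk back. Going once around a cycle of length $k$ through $h$ cyclically rotates the $k-1$ tokens on it. So the puzzle group is generated by conjugates of such rotations. This immediately settles $\Gamma=C_n$: there is only one cycle, and the group is generated by a single rotation.

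The second step is parity. If $\Gamma$ is bipartite, every closed walk has even length. A hole walk of length $\ell$ is a product of $\ell$ transpositions on the $n$ positions, and it fixes $h$. Hence the induced permutation of the tokens is even, and the group lies in $A_{n-1}$. If $\Gamma$ is not bipartite, an odd cycle of length $k$ gives a rotation of $k-1$ tokens, which is an even-length cycle and therefore an odd permutation. So once the group is known to contain $A_{n-1}$, it is all of $S_{n-1}$.

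The third step is the core: showing that the group contains $A_{n-1}$ whenever $\Gamma$ is $2$-connected and neither a cycle nor $\theta_0$. I would use an ear decomposition of $\Gamma$. Since $\Gamma$ is not a cycle, it contains a theta subgraph $\theta(a,b,c)$, meaning two vertices joined by three internally disjoint paths, and $\Gamma$ is built from it by adding ears. For a theta graph, the two cycle-rotations through $h$ generate a group that is transitive on the tokens. Their commutator moves only the few tokens near the branch points, and a suitable power of it, or a further commutator with a rotation, is a $3$-cycle. A primitive group containing a $3$-cycle contains the alternating group by Jordan's theorem, so I would aim to show $2$-transitivity, hence primitivity, by exhibiting an element that fixes one token and is transitive on the rest. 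Adding an ear preserves these properties: the new cycle's rotation overlaps the old token set, so transitivity extends to the new tokens. The $3$-cycles already present, conjugated by the new rotation, then give $3$-cycles reaching the new tokens, and $3$-cycles connecting all tokens generate the alternating group.

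I expect the main obstacle to be the base case of small theta graphs. There the commutator can fail to produce a $3$-cycle. This is exactly where $\theta_0=\theta(1,2,2)$ appears as a genuine exception: on $7$ vertices with $6$ tokens the group is only the exotic transitive $S_5\subset S_6$. The first thing I would do is handle $\theta(1,2,2)$ by a direct computation, checking that the two generating rotations generate a group of order $120$. I would then check that every other theta graph yields a $3$-cycle, either by an explicit commutator computation or by a short argument that the remaining small cases contain a larger theta subgraph. I would also need to verify that any $2$-connected graph properly containing $\theta_0$ escapes the exception, since adding an ear to $\theta_0$ breaks the $S_5$ structure.
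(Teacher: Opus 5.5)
The paper gives no proof of this statement; it only cites Wilson, so I assess your proposal on its own. Your corrected reading is the right one: the group acts on the $n-1$ tokens, so the paper's $\mathbb{Z}_n$, $A_n$, $S_n$ should be $\mathbb{Z}_{n-1}$, $A_{n-1}$, $S_{n-1}$. The scaffolding is sound: lasso generators, the cycle case, parity, and the ear step via conjugated $3$-cycles. The gap is the theta base case. It carries the whole content of the theorem, you only announce it, and both tools you name for it fail.

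\emph{The commutator.} Put the hole at a branch vertex $h$ of $\theta(a,b,c)$, and let $P_1,P_2,P_3$ be the paths to the other branch vertex $z$. Let $\alpha,\beta$ be the rotations around $P_1\cup P_2$ and $P_1\cup P_3$, both starting along $P_1$. If some path is a single edge, $\kappa=[\alpha,\beta]$ is a $3$-cycle, as you hope. If no path is a single edge, $\kappa=(u_1\,u_2)(v\,w)$ is a double transposition. Here $u_1,u_2$ are the first two vertices after $h$ on $P_1$, and $v,w$ are the neighbours of $z$ on $P_2,P_3$. No power of $\kappa$ is then a $3$-cycle.

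\emph{Primitivity.} An element fixing one token and transitive on the other $m-1$ is an $(m-1)$-cycle, which is odd when $m$ is odd. For example, $\theta(2,2,2)$ is bipartite with $7$ tokens, so your own parity step puts its group inside $A_7$, which contains no $6$-cycle.

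\emph{A repair.} If $P_1$ has at least two interior vertices, take the rotation $\delta$ around $P_2\cup P_3$, oriented so that $v\mapsto z\mapsto w$. It fixes $u_1$ and $u_2$, so $\kappa\,\delta^{2}\kappa\,\delta^{-2}=(v\,w)(w\,\delta^{2}w)$. This is a $3$-cycle unless $b+c=3$. If two paths have one interior vertex each, they form a $4$-cycle whose rotation is already a $3$-cycle. Choosing which path plays $P_1$, these cases cover every theta graph except $\theta(1,2,2)=\theta_0$. Now take two rotations whose supports together cover all tokens, each moving one point of the $3$-cycle and fixing another. Conjugating the $3$-cycle by their powers gives a connected family of $3$-cycles on all tokens, which generates the alternating group. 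This needs neither primitivity nor Jordan's theorem.

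Two further points are missing. First, your fallback that the remaining small cases ``contain a larger theta subgraph'' cannot apply when $\Gamma$ is itself a theta graph, since a theta graph has no theta subgraph other than itself. So the infinite family $\theta(a,b,c)$ must be settled uniformly, as above.

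Second, graphs properly containing $\theta_0$ are not handled. Your ear step cannot start from $\theta_0$, because $\mathrm{PGL}_2(\mathbb{F}_5)$ acting on the six tokens contains no $3$-cycle. Saying that ``adding an ear breaks the $S_5$ structure'' restates what must be proved. You need a lemma that such a $\Gamma$ contains a theta subgraph not isomorphic to $\theta_0$. That suffices, since an ear decomposition can start from any $2$-connected subgraph.

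The lemma does not follow from cycle lengths alone. Joining the two branch vertices of $\theta_0$ by a new path of length $3$ gives a graph all of whose cycles have length $5$ or $6$; there the good subgraph is $\theta(2,2,2)$. A short case check on the first ear $E$ attached to $\theta_0$ settles it:
\begin{itemize}
\item If $E$ is a chord, it creates a theta with a single-edge path.
\item If $E$ joins the two branch vertices and has $r$ interior vertices, it creates thetas with $\{1,2,r\}$ and $\{2,2,r\}$ interior vertices, which cannot both be $\{1,2,2\}$.
\item For any other pair of endpoints, $\theta_0$ contains two paths between them whose lengths differ by at least $2$. So some cycle through $E$ has length outside $\{5,6\}$, and any theta containing that cycle (one exists in $\theta_0\cup E$) is not $\theta_0$.
\end{itemize}
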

\index{alternating group}

\paragraph{}
For the 15 puzzle, because the $4\times 4$ grid $\Gamma$ is bipartite. 
Wilson's result gives $A_{15}$, a group of order $15!/2$.

\paragraph{}
In a sliding game on a graph $\Gamma$, one of the vertices is marked and transpositions
are only allowed if this special point is involved.
The prototype is the 15 puzzle, where the underlying graph is the $4 \times 4$ grid graph. 
The graph $G$ of the game is in general not the Cayley graph of a group as the possible
generators can depend on the position. 
In the 15 game, if the hole is at the boundary there are less legal moves than if the hole
is inside. 

\paragraph{}
For the {\bf magic rainbow puzzle ball}, the graph $\Gamma$ is the {\bf icosahedral graph}.
We have not been able to get the god number there with our general program as the 
game graph is too large
For the magic ball analog where $\Gamma$ is the octahedron graph however,
\index{icosahedron graph}
\index{magic rainbow puzzle}

\begin{figure}[!htpb]
\scalebox{0.25}{\includegraphics{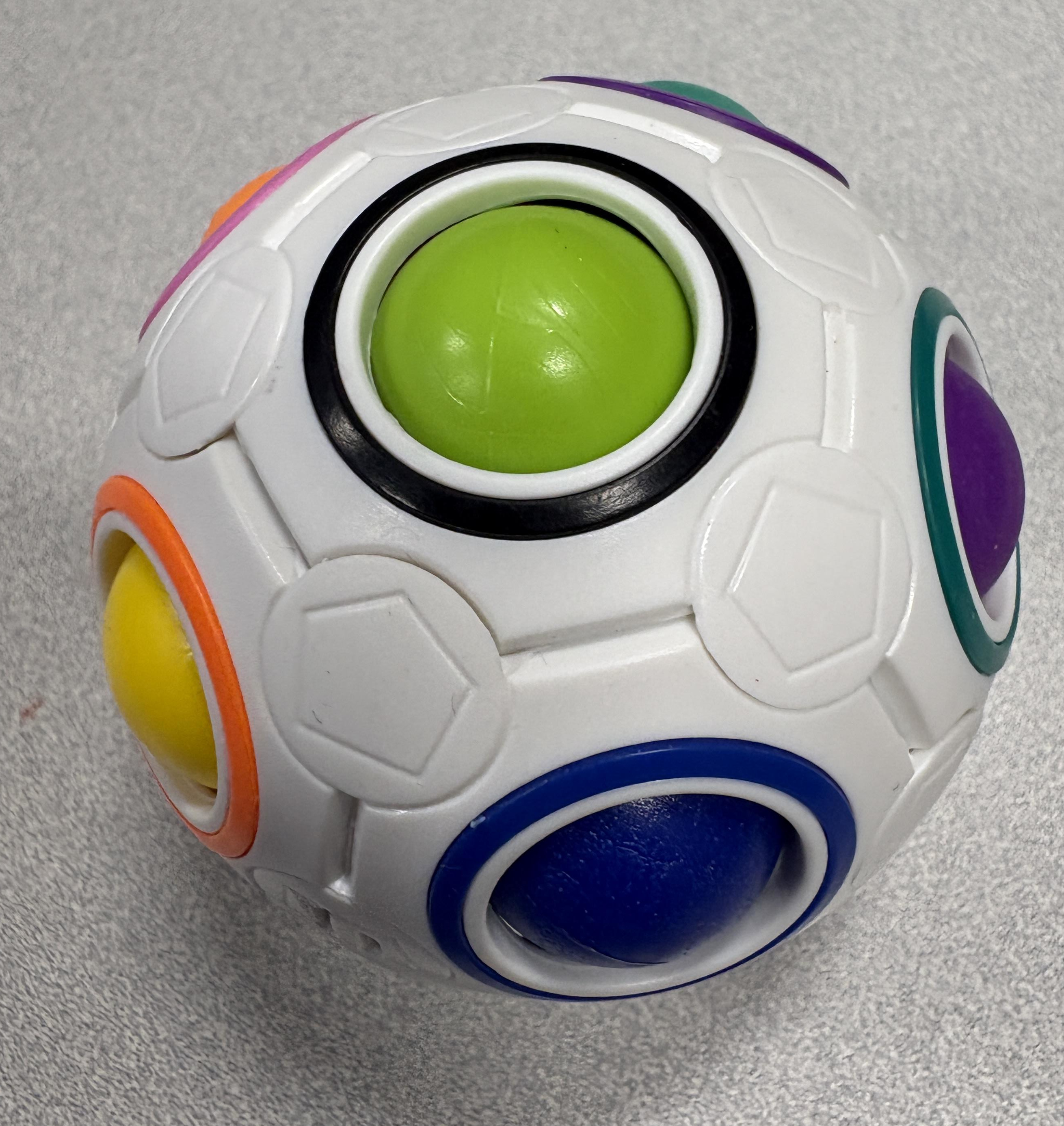}}
\label{Rainbow puzzle}
\caption{
Magic Rainbow puzzle ball.
}
\end{figure}
\index{magic rainbow puzzle}

\paragraph{}
For the 8 and 11 puzzles, we also ran C programs.
\footnote{The C programs were obtained from asking ChatGPT to 
translate the Python implementation of the
standard {\bf Lehmer-code ranking} of permutations and breadth-first
search on the implicitly generated sliding-puzzle graph.}
The 8 puzzle game is the $3 \times 3$ has god number 31.
The 11 puzzle game is the $3 \times 4$ analog. It has god number 53. 
This diameter $53$ is known and OEIS A151944 \cite{OEIS_A151944} which 
credits \cite{Korf2008}. Our Mathematica procedures to generate the small
graphs are not feasible for the 15 puzzle any more. Storing the graph with 
$16! = 20922789888000$ vertices would require dozens of terabytes of RAM.
\index{Lehmer code ranking}

\paragraph{}
Given a graph $G$ and a vertex $v$, one can look at the {\bf neighborhood graph} 
$B_r(v)$ which consists of all vertices in distance $r$ or less from $v$. This defines 
a {\bf layer partition} or {\bf spheres} $S_r = B_{r} \setminus B_{r-1}$ of $G$ of positions which can be reached
with $r$ moves but not with $r-1$ moves. It is in some sense a ``wave front" in the graph game. 
It defines the {\bf BFS layer profile} $\{|S_r(x)|\}_{r=0}^{\mathrm{{\rm God}(G,v)}}$, the sizes of these
fronts.  Natural questions pop up: what is the 
structure of this sequence of integers? What is the value of the largest non-zero entry?
Note that in the case of Cayley graphs ${\rm God}(G,v)$ is independent of $v$ but for
general game graphs, this is not necessarily the case.
The BFS layer profile often appears unimodal (concave down): it increases then decreases. 
\index{BFS layer profile}
\index{wave front}
\index{BFS layer partition}
\index{layer profile}
\index{layer partition}

\paragraph{}
{\bf Examples. } \\
{\bf 1} For the sliding puzzle with $\Gamma=K_n$, the reachable configuration
space from the identity is all of $S_n$ and ${\rm God}(G) = n-1$. 
For $K_5$, the layer sizes are $1, 4, 12, 30, 44, 26, 3$, totaling to $120=5!$.
In this case, the BFS layer profile is independent of $v$. \\
The last layers of complete graph puzzles are harder to describe cleanly but show
their own structure. For $K_5$, the last layer has exactly $3$ configurations,
all of cycle type $(2, 2, 1)$, and all with the hole at vertex $0$. For $K_6$,
the last layer has $35$ configurations, split between cycle types $(2, 2, 2)$
(with $15$ representatives) and $(3, 2, 1)$ (with $20$ representatives), with
hole positions distributed across all vertices.
For $K_5$, the last layer is a single incomplete conjugacy class: it contains
$3$ of the $15$ permutations in $S_5$ with cycle type $(2, 2, 1)$, and all $3$
have the hole at vertex $0$. \\
{\bf 2)} For the sliding puzzle on the path graph $\Gamma=P_n$,
each connected component of the game graph $G$ is again a path $P_n$, 
so that ${\rm God}(G)=n-1$. In this case however, the maximal ${\rm God}(G,v)$ 
depends on $v$. If the initial condition is near the middle, then 
The BFS profile from the case when the hole is at the boundary is 
$1, 1, 1, \ldots, 1$. \\
{\bf 3} For $\Gamma=C_n$, the game graph is $G=C_{n(n-1)}$. The
last layer $L_{\mathrm{God}}$ contains exactly one configuration, 
the antipodal point. \\
{\bf 4} We can build {\bf game graphs} coming from transposition games on trees,
for which the BFS layer profile
is not concave. Just take a tree with small vertex degree in distance $\leq 3$
then have very large vertex degree in distance $\leq 6$ and finally again 
small vertex degree until maximal radius $9$. 
\index{tree}

\vfill
\pagebreak

\section{Chess games}

\paragraph{}
{\bf Chess} is without doubt one of the most iconic multi-player games. 
It motivated the first steps in mathematical game theory \cite{Zermelo1913,Shannon1950}.
Many chess variants have been developed \cite{PritchardChess}. In the graph 
theoretical setup used here, we assumed that a game event is modeled by a finite
simple maximal graph that starts at $v$. This obviously is not the standard
assumption in chess, as one does not want to have the same position appear
3 times. And it does not matter for the God number, nor for the {\bf Zermelo function}
${\bf Zermelo}: \{ Games \} \to \{V wins, W wins, draw \}$. 
\index{Zermelo function}
\index{chess}

\paragraph{}
Let us discuss the repetition case issue.
Assume $G(n)$ is a game variant, where the game rule assumes that a position can not
occur more than $(n-1)$ times and where otherwise, all rules are the same. Game events in 
a sub-graph $H$ of $G(n)$ are now the maximal finite simple paths in $H$ which can not have
more than $n-1$ self intersections. We formulated the result for chess but obviously it
takes place for all games covered by the axioms. The following lemma is therefore a justification
why the assumption that game events are finite simple maximal paths starting at $v$ is a reasonable
one and would not change essential things like "who wins" if one would allow a number of 
repetitions (like in chess). 
\index{repetition assumption}
\index{repetition issue}

\begin{lemma}[Repetition]
${\rm God}(G(n))={\rm God}(G(1))$ and ${\rm Zermelo}(G(n)) = {\rm Zermelo}(G(1))$. 
\end{lemma}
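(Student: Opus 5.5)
The plan is to compare strategies in $G(n)$ and $G(1)$ directly. A game event in $G(n)$ is a walk from $v$ that may revisit vertices, with each position visited at most $n-1$ extra times. I will use a loop-erasure map $\gamma \mapsto \hat\gamma$: whenever the walk returns to a vertex $x$ already visited, cut out the closed sub-walk between the two visits. The result $\hat\gamma$ is a simple path from $v$ that ends at the same final vertex as $\gamma$. Since cutting loops only removes edges, it has no more $V$-moves, so $|\hat\gamma| \le |\gamma|$.

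The first key observation concerns terminal vertices. Membership of the end point in $A\cap W$, in $A \cap V$, or in $S\setminus A$ is unchanged by loop erasure. Only the "self-trapped" draws differ between the two rule sets. A walk can be stuck in $G(n)$ only once every continuation would exceed the repetition budget. I will argue that such stuck walks correspond exactly to loop-erased paths that are stuck in $G(1)$ after the same erasure.

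The second step is to transfer strategies. A strategy is a subgraph $H$ and does not depend on $n$. Suppose $H$ is a win strategy for $V$ in $G(1)$: every maximal simple path in $H$ from $v$ ends in $A\cap W$. I must show every $G(n)$-event in $H$ also ends in $A \cap W$. Take such an event $\gamma$. If it ended outside $A\cap W$, either at a vertex of $S\setminus A$ or $A\cap V$, or stuck, then the corresponding simple path $\hat\gamma$ would be a simple path in $H$ ending there. Extending $\hat\gamma$ to a maximal simple path would contradict the win property, unless the extension reaches $A\cap W$. This is where the stuck case needs care. Conversely, a $G(n)$ win strategy is a $G(1)$ win strategy, because every simple path is also an admissible $G(n)$ event: simple events are $G(n)$ events as soon as they are maximal. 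Together these give equality of the Zermelo function.

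For the god numbers, loop erasure gives $\max_{\gamma \in \mathcal{E}_n(H)} |\gamma| \ge \max_{\gamma\in\mathcal{E}_1(H)}|\gamma|$ at once. This yields ${\rm God}(G(n)) \ge {\rm God}(G(1))$ after minimizing over winning $H$. For the reverse inequality, I will replace an optimal $G(1)$ win strategy $H$ by a pruned one $H'$. In $H'$, player $V$ keeps at each of its vertices only edges that strictly decrease the "distance to forced win" (the attractor rank in $H$). Then every walk in $H'$ is automatically simple, because the rank strictly decreases along $V$-moves and does not increase along $W$-moves. So $G(n)$-events in $H'$ coincide with $G(1)$-events, and their maximal length is at most the god number.

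The main obstacle is making the self-trapped draw case and this rank argument rigorous. With the paper's purely path-based definitions, a $V$ win in $G(1)$ might rely on $W$ being forced to trap itself, which repetitions could undo. I would first try to show that the attractor ranking exists for any $G(1)$ win strategy. If this fails, I would restrict to the interpretation in the lemma's motivation, namely that a stuck repetition terminates the game as a draw.
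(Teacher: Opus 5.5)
\textbf{The step that fails is the reverse inequality ${\rm God}(G(n))\le{\rm God}(G(1))$, and it cannot be repaired, because the statement is false.} The attractor ranking you hope for need not exist. Take $V=\{v,b\}$, $W=\{a,t\}$, edges $v\to a$, $a\to v$, $a\to b$, $b\to t$, and $A=S=\{t\}$. Deleting $v\to a$ or $b\to t$ leaves a maximal path ending at $v$ or $b$, outside $A$, so $H=G$ is the only winning $V$-strategy. In $G(1)$ the move $a\to v$ is blocked by simplicity, the only event is $(v,a,b,t)$, and ${\rm God}(G(1))=2$. Now suppose $G(n)$ lets each position occur up to $n$ times. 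Then $W$ may answer $a\to v$ up to $n-1$ times. The events are $v,a,v,a,\dots,v,a,b,t$ with $k+1$ copies of $v,a$, for $0\le k\le n-1$. None of them gets stuck, all end at $t$, and so ${\rm God}(G(n))=n+1$. If instead ``$n-1$ self intersections'' is a total budget, then for $n=2$ the walk $(v,a,v)$ is maximal and ends outside $A$. So $G(2)$ is a draw while $G(1)$ is a win for $V$, and even the Zermelo part fails. Here $v,a$ lie on a cycle that only $W$ can leave, so neither lies in the attractor of $A\cap W$. The $G(1)$ win rests entirely on $W$ being forbidden to return to a visited position, not on $W$ trapping itself. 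Your fallback does not help either. In the per-position reading nothing is ever stuck in this example. Under the rule that any repetition is an immediate draw, $W$ simply plays $a\to v$ and draws.

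\textbf{Several intermediate claims also need correcting.} Loop erasure gives each $G(n)$-event a \emph{shorter} simple path, which is the wrong direction for $\max_{\mathcal{E}_n(H)}|\gamma|\ge\max_{\mathcal{E}_1(H)}|\gamma|$. The correct reason is that every $G(1)$-event is a prefix of some $G(n)$-event. Stuck $G(n)$-walks do not correspond to stuck loop-erased paths, since the blocking out-neighbours may have been erased. Your claim that ``simple events are $G(n)$ events as soon as they are maximal'' fails exactly for $G(1)$-events stuck at a non-terminal vertex, which are not $G(n)$-maximal. The inclusion you want there does hold in the per-position reading. Suppose $(v_0,\dots,v_m)$ is stuck at $x=v_m$, and let $v_j$ be the earliest out-neighbour of $x$ on it. Run the loop $v_j,\dots,v_m$ another $n-1$ times; then $x$ is stuck in $G(n)$ as well. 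Hence every $G(n)$-winning strategy is $G(1)$-winning, and with the prefix remark ${\rm God}(G(n))\ge{\rm God}(G(1))$. That inequality is what survives, and the example shows it can be strict. The paper's own proof does not supply the missing idea either. It treats only the Zermelo part, using the same unjustified inclusion of $G(1)$-events among $G(n)$-events plus trichotomy, and it gives no argument for the god numbers. The lemma has to be weakened, or the repetition rule changed, rather than proved as stated.
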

\begin{proof}
Assume $G(n)$ is win for the initial player $V$=White. This means that $V$ has 
a strategy $H$ such that no game event for $W$ with $n-1$ possible self intersections
ends up in the winning set $A$. But then also no game events $W$ that has no self intersections
ends up in the winning set $A$ so that also $G(1)$ is win. \\
The same argument holds if $G(n)$ is a win for $W$.  \\
If $G(n)$ is a draw, then there is a game strategy $H$ to which both players have contributed
such that there is no game event with possibly up to $(n-1)$ self intersections that ends up in $A$.
So, also no game event with no self-intersections ends up in $A$ which means that the game 
strategy $H$ is also a game strategy for draw in $G(1)$. \\
The opposite direction follows from Zermelo's theorem and logic: 
assume that $G(n)$ is not a win for white, then 
this means that $G(n)$ is a draw or win for black for $G(n)$. But in both cases this leads to a 
draw or win for black for $G(1)$ and so not to a win for white for $G(1)$. The other cases are 
similar. 
\end{proof} 
\index{Chess repetition lemma}

\paragraph{}
We see that the theoretical simplification to assume finite simple paths as game events is 
also in very concrete situations like chess irrelevant. It does not change the basic 
question whether the game is a win or loss. Not imposing any limit on the number of times
that a position can be revisited, would change things however. Such objections had been 
voiced to Zermelo's original approach. Having not only a finite set of game positions but also 
a finite set of game events is essential. 

\paragraph{}
If we play on smaller boards, we can hope to explore the entire game graph.
Here is the state graph of level 4 of the $4\times2$ chess board.

\begin{center}
\begin{adjustbox}{max width=12cm}
\begin{forest}
  for tree={
    draw, rounded corners=2pt,
    align=center,
    inner sep=1pt,
    fill=white,
    font=\tiny, 
    edge={-latex, thin},
    parent anchor=south,
    child anchor=north, 
    grow=south,
    l sep=50mm, 
    s sep=1mm, 
  }
  [\pos{Qa1,Kb1,Pa2,Pb2,qa4,kb4,pa3,pb3}\\\textbf{Start}
    [\pos{Qa1,Kb1,Pb2,Pb3,qa4,kb4,pa3}\\1.axb3
      [\pos{Qa1,Kb1,Pb2,Pb3,qa4,kb4,pa2}\\1\textellipsis a2+
        [\pos{Qa2,Kb1,Pb2,Pb3,qa4,kb4}\\2.Qxa2
          [\pos{Qa2,Kb1,Pb2,Pb3,qa3,kb4}\\2\textellipsis Qa3]
          [\pos{qa2,Kb1,Pb2,Pb3,kb4}\\2\textellipsis Qxa2+]
          [\pos{Qa2,Kb1,Pb2,qb3,kb4}\\2\textellipsis Qxb3]
        ]
      ]
      [\pos{Qa1,Kb1,Pb3,qa4,kb4,pb2}\\1\textellipsis axb2
        [\pos{Qa1,Kb1,Ba4,kb4,pb2}\\2.bxa4=B
          [\pos{ba1,Kb1,Ba4,kb4}\\2\textellipsis bxa1=B]
          [\pos{na1,Kb1,Ba4,kb4}\\2\textellipsis bxa1=N]
          [\pos{qa1,Kb1,Ba4,kb4}\\2\textellipsis bxa1=Q+]
          [\pos{ra1,Kb1,Ba4,kb4}\\2\textellipsis bxa1=R+]
        ]
        [\pos{Qa1,Kb1,Na4,kb4,pb2}\\2.bxa4=N
          [\pos{ba1,Kb1,Na4,kb4}\\2\textellipsis bxa1=B]
          [\pos{na1,Kb1,Na4,kb4}\\2\textellipsis bxa1=N]
          [\pos{qa1,Kb1,Na4,kb4}\\2\textellipsis bxa1=Q+]
          [\pos{ra1,Kb1,Na4,kb4}\\2\textellipsis bxa1=R+]
          [\pos{Qa1,Kb1,Na4,kb3,pb2}\\2\textellipsis Kb3]
        ]
        [\pos{Qa1,Kb1,Qa4,kb4,pb2}\\2.bxa4=Q\#]
        [\pos{Qa1,Kb1,Ra4,kb4,pb2}\\2.bxa4=R+
          [\pos{Qa1,Kb1,Ra4,kb3,pb2}\\2\textellipsis Kb3]
        ]
        [\pos{Qa1,Kb2,Pb3,qa4,kb4}\\2.Kxb2
          [\pos{Qa1,Kb2,Pb3,qa2,kb4}\\2\textellipsis Qa2+]
          [\pos{Qa1,Kb2,Pb3,qa3,kb4}\\2\textellipsis Qa3+]
          [\pos{qa1,Kb2,Pb3,kb4}\\2\textellipsis Qxa1+]
          [\pos{Qa1,Kb2,qb3,kb4}\\2\textellipsis Qxb3\#]
        ]
        [\pos{Qa2,Kb1,Pb3,qa4,kb4,pb2}\\2.Qa2
          [\pos{Qa2,Kb1,Pb3,qa3,kb4,pb2}\\2\textellipsis Qa3]
          [\pos{qa2,Kb1,Pb3,kb4,pb2}\\2\textellipsis Qxa2+]
          [\pos{Qa2,Kb1,qb3,kb4,pb2}\\2\textellipsis Qxb3]
        ]
        [\pos{Qa3,Kb1,Pb3,qa4,kb4,pb2}\\2.Qa3+
          [\pos{ka3,Kb1,Pb3,qa4,pb2}\\2\textellipsis Kxa3]
          [\pos{qa3,Kb1,Pb3,kb4,pb2}\\2\textellipsis Qxa3]
        ]
        [\pos{Qa4,Kb1,Pb3,kb4,pb2}\\2.Qxa4\#]
        [\pos{Qb2,Kb1,Pb3,qa4,kb4}\\2.Qxb2
          [\pos{Qb2,Kb1,Pb3,qa1,kb4}\\2\textellipsis Qa1+]
          [\pos{Qb2,Kb1,Pb3,qa2,kb4}\\2\textellipsis Qa2+]
          [\pos{Qb2,Kb1,Pb3,qa3,kb4}\\2\textellipsis Qa3]
          [\pos{Qb2,Kb1,qb3,kb4}\\2\textellipsis Qxb3]
        ]
      ]
      [\pos{Qa1,Kb1,Pb2,qa4,kb3}\\1\textellipsis Kxb3
        [\pos{Qa1,Kb1,Pa3,qa4,kb3}\\2.bxa3
          [\pos{Qa1,Kb1,Pa3,qb4,kb3}\\2\textellipsis Qb4]
          [\pos{Qa1,Kb1,qa3,kb3}\\2\textellipsis Qxa3]
        ]
        [\pos{Qa2,Kb1,Pb2,qa4,kb3}\\2.Qa2+
          [\pos{Qa2,Kb1,Pb2,qa4,kb4}\\2\textellipsis Kb4]
        ]
        [\pos{Qa3,Kb1,Pb2,qa4,kb3}\\2.Qxa3+
          [\pos{qa3,Kb1,Pb2,kb3}\\2\textellipsis Qxa3]
        ]
      ]
      [\pos{Qa1,Kb1,Pb2,qb3,kb4,pa3}\\1\textellipsis Qxb3
        [\pos{Qa2,Kb1,Pb2,qb3,kb4,pa3}\\2.Qa2
          [\pos{Qa2,Kb1,qb3,kb4,pb2}\\2\textellipsis axb2]
          [\pos{Qa2,Kb1,Pb2,qb3,ka4,pa3}\\2\textellipsis Ka4]
          [\pos{Qa2,Kb1,Pb2,qa4,kb4,pa3}\\2\textellipsis Qa4]
          [\pos{qa2,Kb1,Pb2,kb4,pa3}\\2\textellipsis Qxa2+]
          [\pos{Qa2,Kb1,qb2,kb4,pa3}\\2\textellipsis Qxb2+]
        ]
        [\pos{Qa3,Kb1,Pb2,qb3,kb4}\\2.Qxa3+
          [\pos{qa3,Kb1,Pb2,kb4}\\2\textellipsis Qxa3]
        ]
      ]
    ]
    [\pos{Qa1,Kb1,Pa2,Pa3,qa4,kb4,pb3}\\1.bxa3+
      [\pos{Qa1,Kb1,Pa2,qa4,ka3,pb3}\\1\textellipsis Kxa3
        [\pos{Qa1,Kb1,Pb3,qa4,ka3}\\2.axb3+
          [\pos{Qa1,Kb1,Pb3,qa4,kb4}\\2\textellipsis Kb4]
          [\pos{Qa1,Kb1,qa4,kb3}\\2\textellipsis Kxb3]
        ]
        [\pos{Qb2,Kb1,Pa2,qa4,ka3,pb3}\\2.Qb2+
          [\pos{Qb2,Kb1,Pa2,qa4,kb4,pb3}\\2\textellipsis Kb4]
        ]
      ]
      [\pos{Qa1,Kb1,Pa2,qa3,kb4,pb3}\\1\textellipsis Qxa3
        [\pos{Qa1,Kb1,Pb3,qa3,kb4}\\2.axb3
          [\pos{Qa1,Kb1,qa3,kb3}\\2\textellipsis Kxb3]
          [\pos{Qa1,Kb1,Pb3,qa2,kb4}\\2\textellipsis Qa2+]
          [\pos{Qa1,Kb1,Pb3,qa4,kb4}\\2\textellipsis Qa4]
          [\pos{Qa1,Kb1,Pb3,qb2,kb4}\\2\textellipsis Qb2+]
          [\pos{qa1,Kb1,Pb3,kb4}\\2\textellipsis Qxa1+]
          [\pos{Qa1,Kb1,qb3,kb4}\\2\textellipsis Qxb3+]
        ]
        [\pos{Qb2,Kb1,Pa2,qa3,kb4,pb3}\\2.Qb2
          [\pos{Qb2,Kb1,Pa2,qa3,ka4,pb3}\\2\textellipsis Ka4]
          [\pos{Qb2,Kb1,Pa2,qa4,kb4,pb3}\\2\textellipsis Qa4]
          [\pos{Qb2,Kb1,qa2,kb4,pb3}\\2\textellipsis Qxa2+]
          [\pos{qb2,Kb1,Pa2,kb4,pb3}\\2\textellipsis Qxb2+]
        ]
      ]
    ]
  ]
\end{forest}
\end{adjustbox}
\end{center}

\paragraph{}
One path leads down to a white forced win, but the other paths all lead to draws or 
stalemates assuming all players play rationally.

\begin{center}
\begin{adjustbox}{max width=12cm}
\begin{forest}
  for tree={
    draw, rounded corners=3pt,
    align=center,
    inner sep=2pt,
    fill=white,
    font=\small,
    edge={-latex, thick},
    parent anchor=south,
    child anchor=north, 
    grow=south,
    l sep=4mm,
    s sep=6mm,
  }
  [\pos{\posINIT}\\\textbf{Start}
    [\pos{\posMa}\\1.axb3
      [\pos{\posMb}\\1\textellipsis a2+
        [\pos{\posMc}\\2.Qxa2
          [\pos{\posMd}\\2\textellipsis Qa3
            [\pos{\posMATE}\\\textcolor{red}{\textbf{3.Qxa3\#}}]
          ]
        ]
      ]
      [\pos{\posAxbTwo}\\1\textellipsis axb2
        [{$\cdots$}\\\textcolor{blue}{Draw}]
      ]
      [\pos{\posQxbThree}\\1\textellipsis Qxb3
        [{$\cdots$}\\\textcolor{blue}{Draw}]
      ]
      [\pos{\posKxbThree}\\1\textellipsis Kxb3
        [{$\cdots$}\\\textcolor{blue}{Draw}]
      ]
    ]
    [\pos{\posDraw}\\1.bxa3
      [{$\cdots$}\\\textcolor{blue}{Draw (stalemate)}]
      [{$\cdots$}\\\textcolor{blue}{Draw (repetition)}]
    ]
  ]
\end{forest}
\end{adjustbox}
\end{center}

\paragraph{}
The $4 \times 2$ chess board has pieces $\{K,Q,P\}$ for both white and black players. 
It defines a directed digraph $G = (V \cup W, E)$. The graph starts from the starting position 
$v \in V$. The graph is generated using a breadth-first search (BFS) starting from $v$. For each state 
dequeued, we compute the legal moves using pseudo-legal move generation followed by check validation 
to ensure legality. States are added to the vertex set $V \cup W$ and the queue only upon first 
discovery so that each unique position is processed exactly once. The BFS traversal guarantees that 
$G$ contains all reachable states and represents the complete game tree. 
The height of a node is the distance to $v$. 

\paragraph{}
The set of end positions $S$ are the nodes for which the out-degree is zero. These are positions,
where no legal continuation is possible like with {\bf chess mate} or where the game is pointless to continue, like of
both parties only have kings. Within $S$ is the set, $A \subset S$ is the set of win positions. 
Game events are modeled as maximal simple paths starting at $v$. A path ending in $A \cap W$ constitutes 
a victory for V=White, whereas a path ending in $A \cap V$ constitutes a victory for W=Black. 
Terminal nodes that fall into the complement $S \setminus A$, which are draw positions.
This can be {\bf stalemates}, where the active player is not in check but possesses no legal moves.
It is custom to include in $S$ also situations, where a continuation is pointless as no win is possible.
It is also assumed that $A$ contains situations, where a continuation is hopeless. 
Nobody would battle out a game where a rook and a king battle a lonely king, even so the game path 
would allow to reach a stalemate still in principle.
\index{stalemate}
\index{chess mate}
\index{lonely king}

\paragraph{}
Let us look at the following initial configurations:
(I) king-side pawn only, (II) queen-side pawn only, and (III) where both pawns present. 
The corresponding graphs exhibit the following properties:

\begin{itemize}
    \item Configuration I: $|V \cup W| = 517$, $|E| = 1034$, $|A \cap W| = 10$, $|A \cap V| = 11$, $|S \setminus A| = 52$, $\max d(x) = 16$
    \item Configuration II: $|V \cup W| = 519$, $|E| = 1037$, $|A \cap W| = 10$, $|A \cap V| = 11$, $|S \setminus A| = 52$, $\max d(x) = 16$
    \item Configuration III: $|V \cup W| = 1966$, $|E| = 4534$, $|A \cap W| = 54$, $|A \cap V| = 44$, $|S \setminus A| = 113$, $\max d(x) = 20$
\end{itemize}

\begin{figure}[!htpb]
\scalebox{0.07}{\includegraphics{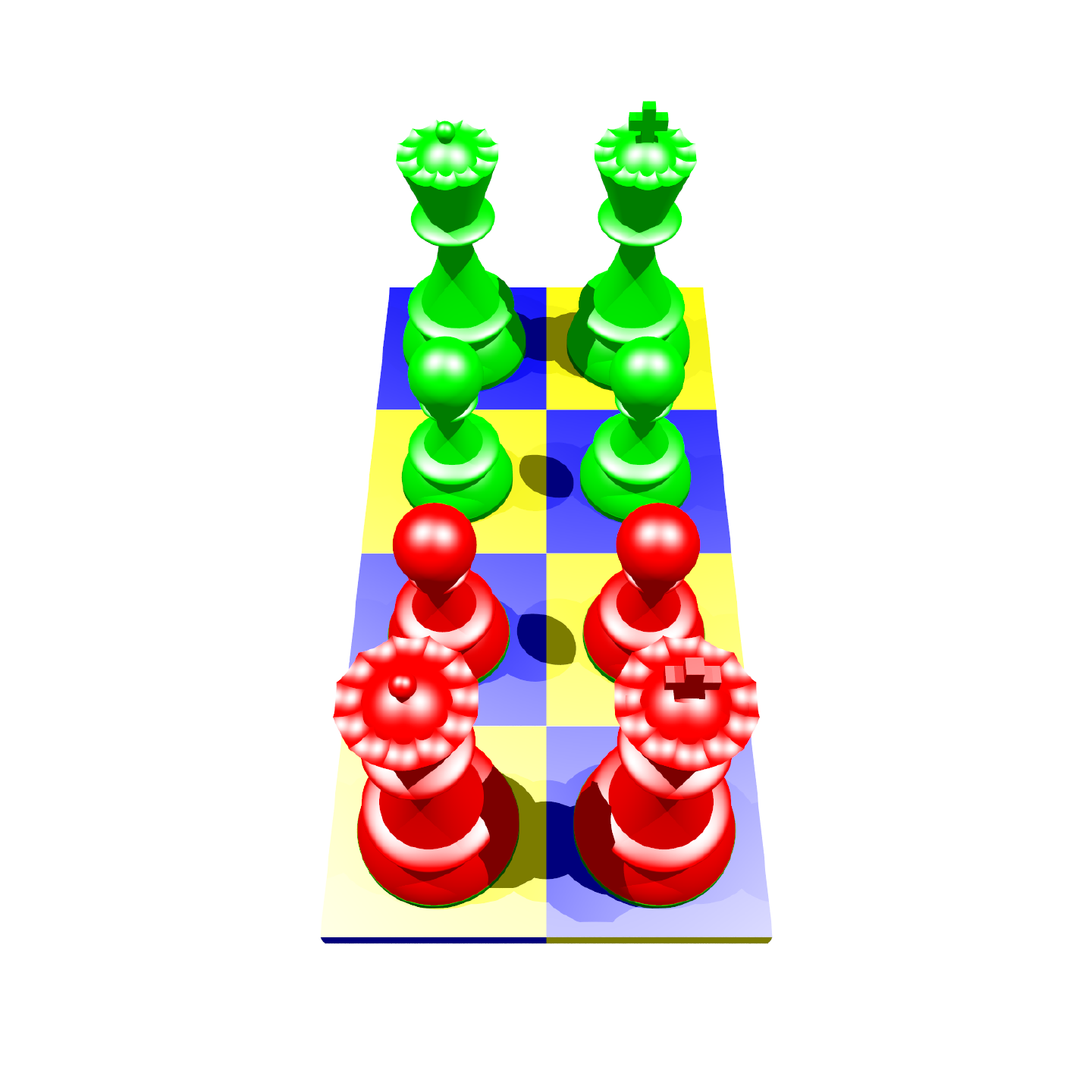}}
\scalebox{0.07}{\includegraphics{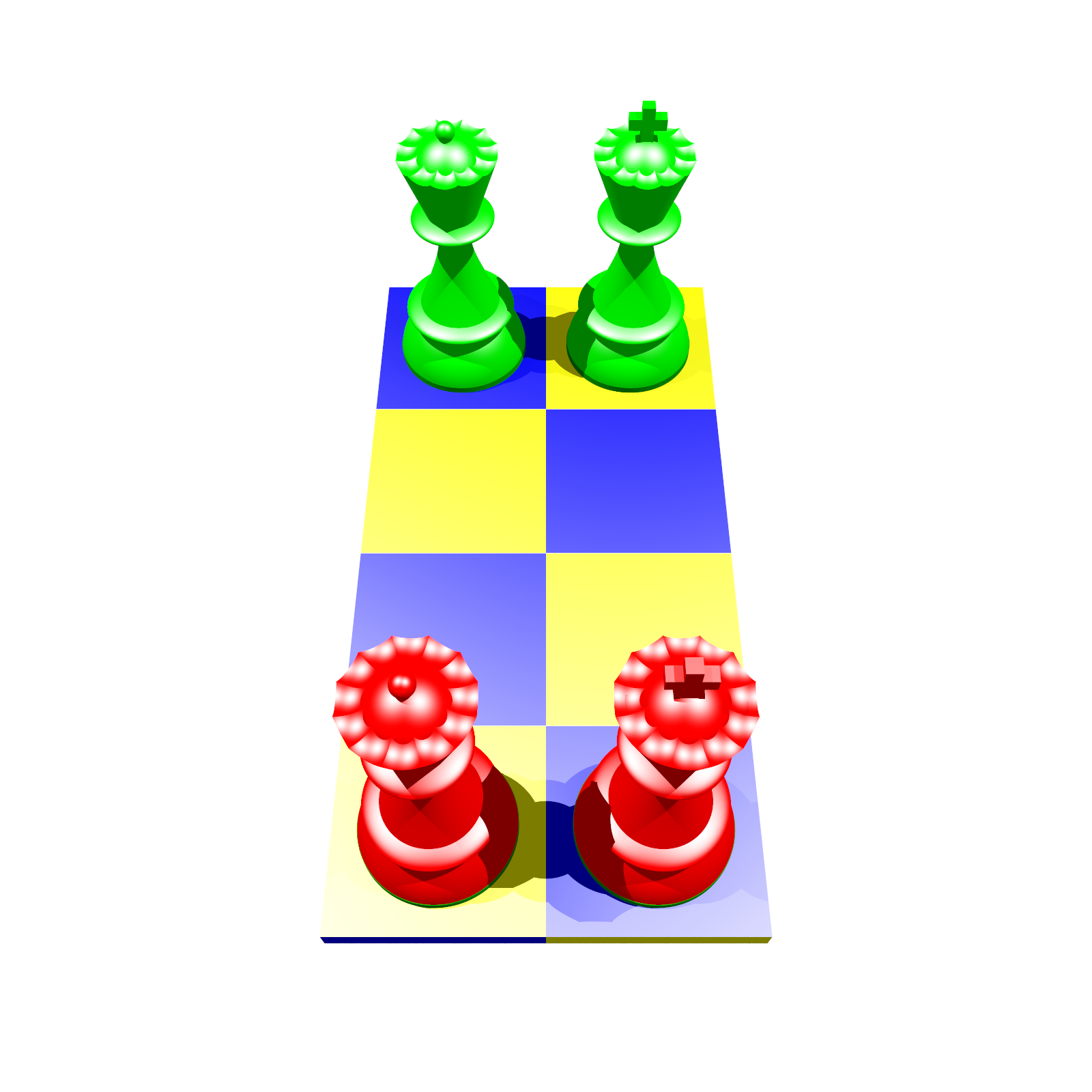}}
\scalebox{0.07}{\includegraphics{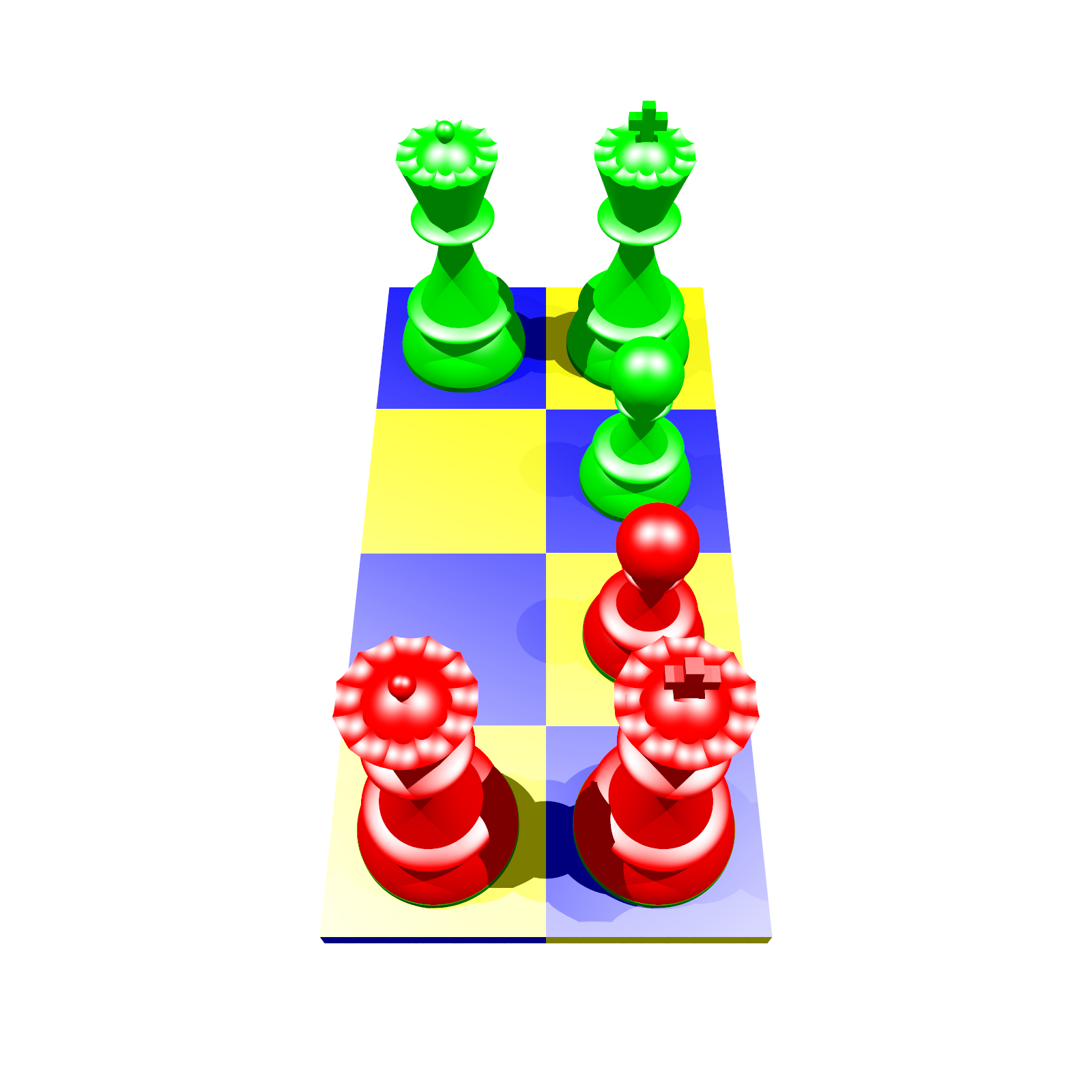}}
\scalebox{0.07}{\includegraphics{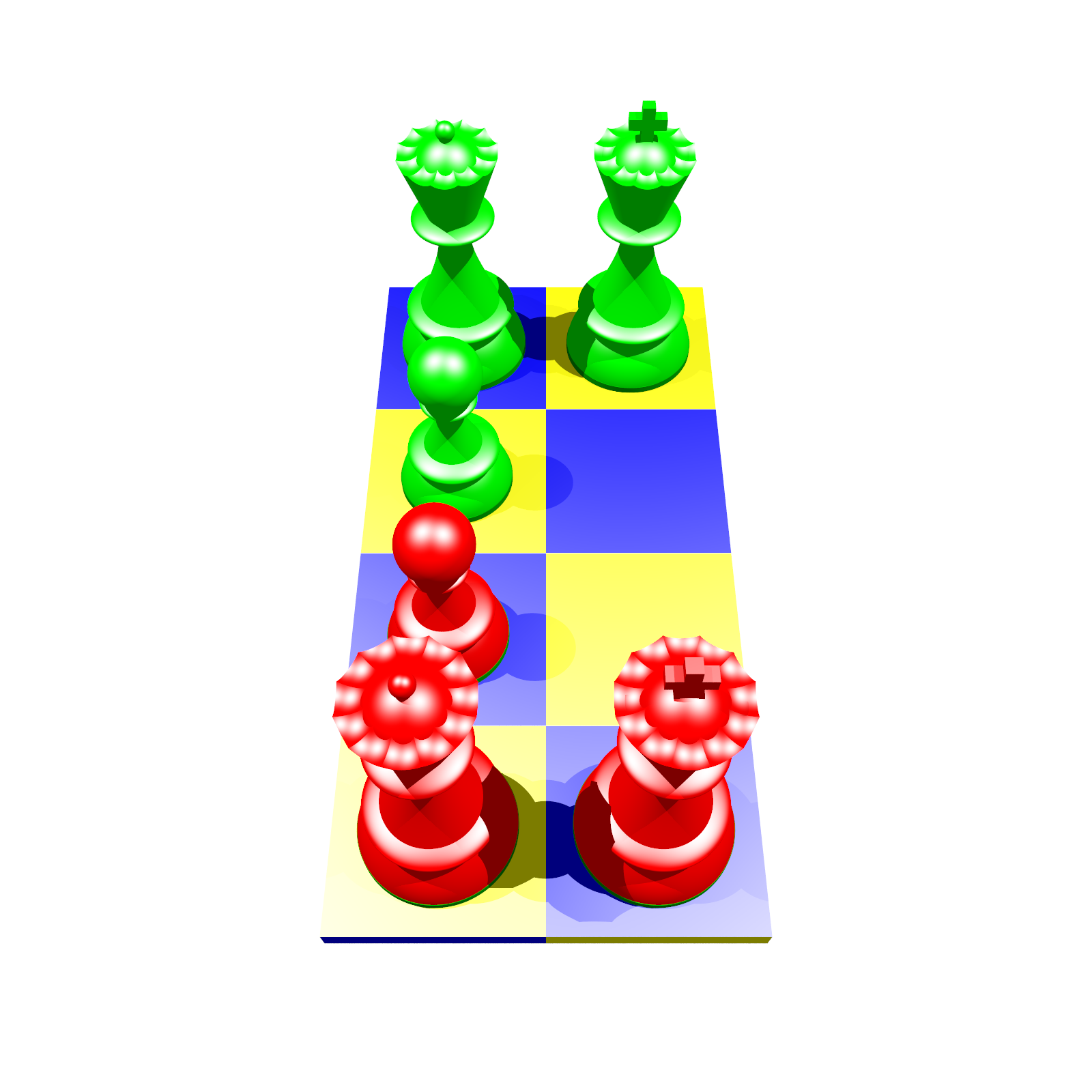}}
\label{Small Chess puzzles}
\caption{ 
The middle two cases cases are ``draw in one". The strategy for $V$ is
to eliminate all but the capturing move. In the right, $W$ wins.
}
\end{figure}

\paragraph{}
The exponential growth in graph complexity from configurations I/II to III demonstrates the combinatorial explosion 
inherent in chess variants as piece density increases.
The maximum shortest-path distance $\max d(x)$ reflects the diameter of the reachable graph and indicates the minimum 
number of plies required to reach the most strategically distant positions. 
While the both-pawn configuration exhibits greater graph depth due to increased tactical complexity, 
determining the formal God number of these configurations - defined as the minimal graph path length evaluating over all 
sub-graph strategies will require further minimax optimization across the sub-graphs $H \subset G$ to minimize the 
maximal potential loss. In theoretical analysis of chess, one usually does not consider {\bf promotion} as this can 
render the game graph even larger. 
\index{promotion}

\paragraph{}
Lets look at two small $4 \times 4$ and $2 \times 4$ chess games.
In the $4 \times 4$ situation there is mate in 3. In the $2 \times 4$ game, there is
a draw. Can we estimating the longest possible game,
meaning the longest possible game event in the graph? In small situations as such
it might be possible. 
A trivial upper bound for the game events is $|V|^{|V|}$, where V are the 
set of all possible positions.

\paragraph{}
Reducing the number of positions produces to {\bf chess problem},
A typical problem gives the god number and asks to verify that this is the case. 
\index{chess problems}

\begin{figure}[!htpb]
\scalebox{0.1}{\includegraphics{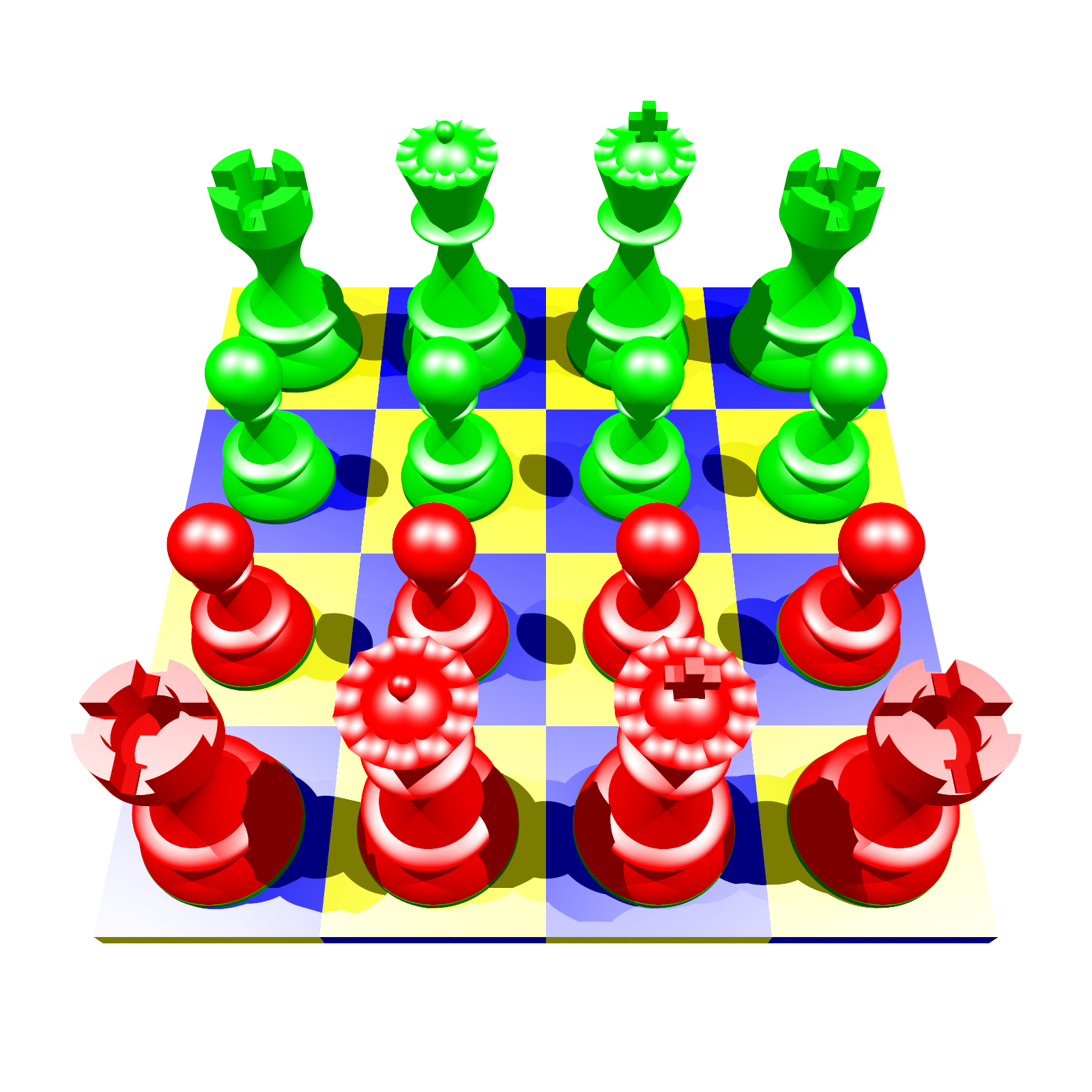}}
\label{Small Chess puzzles}
\caption{ 
A $4 \times 4$ board.
}
\end{figure}

\begin{figure}[!htpb]
  \includegraphics[width=12cm]{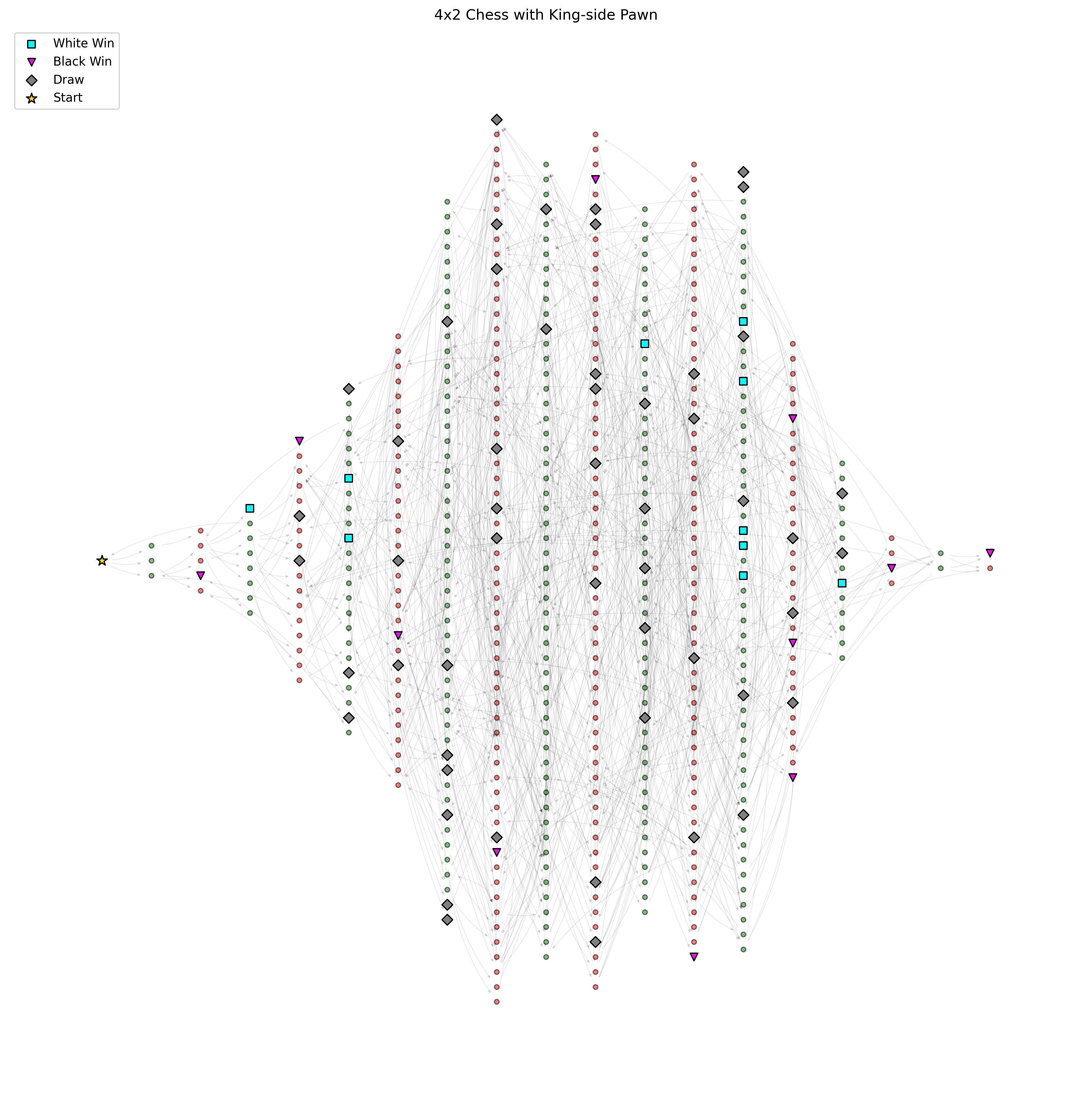}
  \label{kingside}
\caption{ 
The game graph for the king side pawn.
}
\end{figure}

\begin{figure}[!htpb]
   \includegraphics[width=12cm]{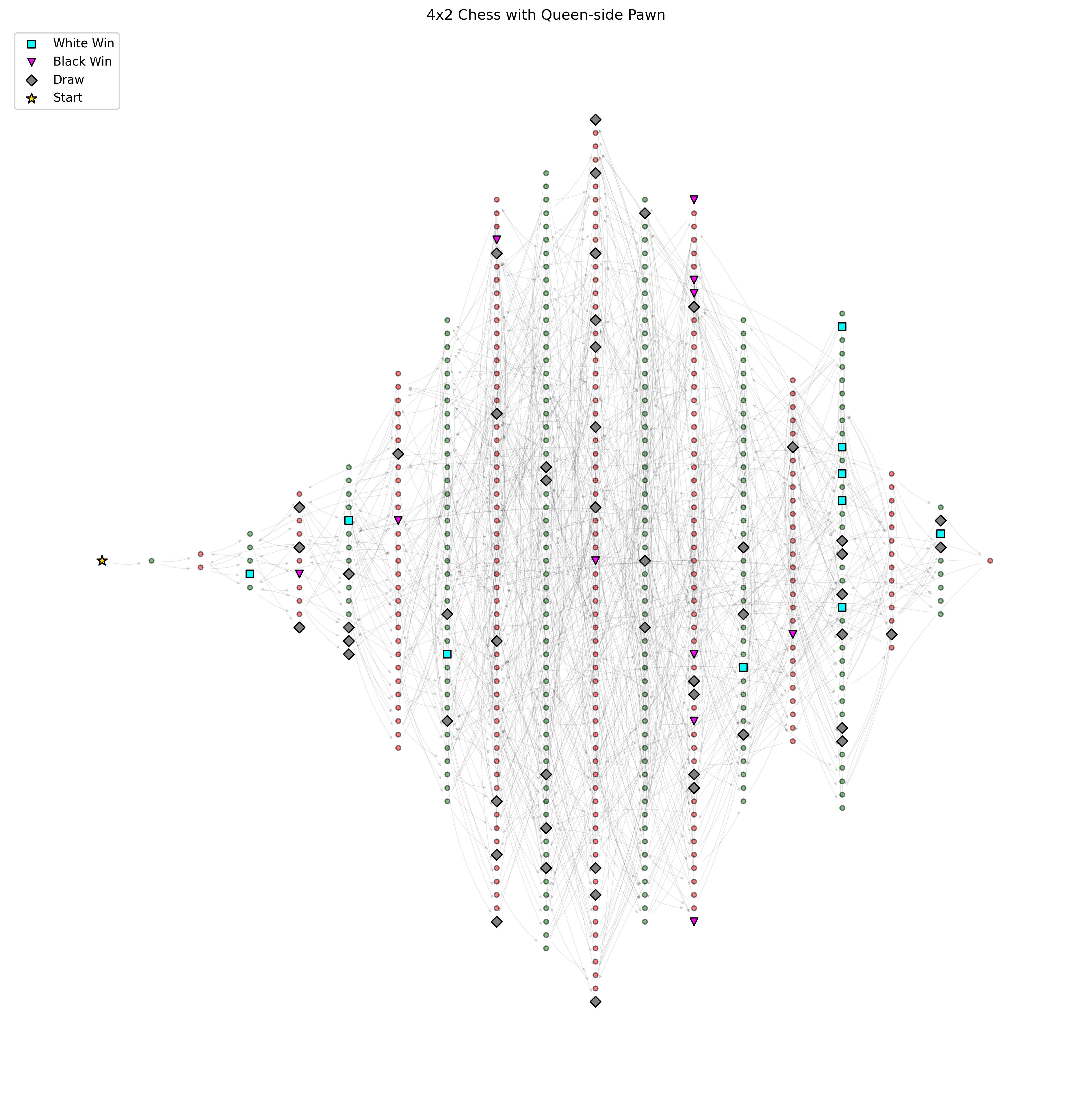} 
   \label{queenside}
\caption{
The game graph for the queen side pawn.
}
\end{figure}

\begin{figure}[!htpb]
   \includegraphics[width=12cm]{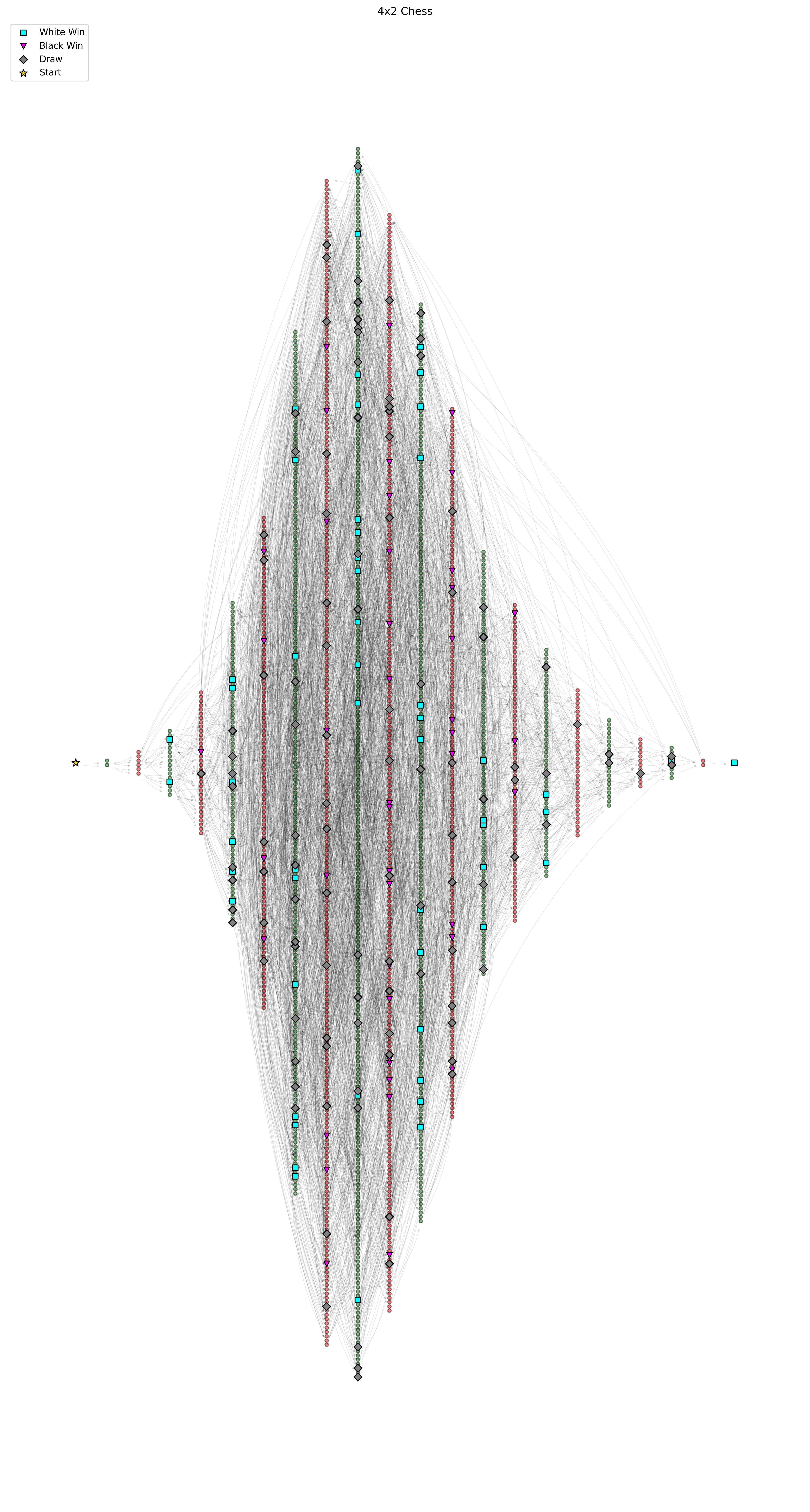} 
   \label{both}
\caption{
The game graph for both pawns
}
\end{figure}

\begin{figure}[!htpb]
   \includegraphics[width=12cm]{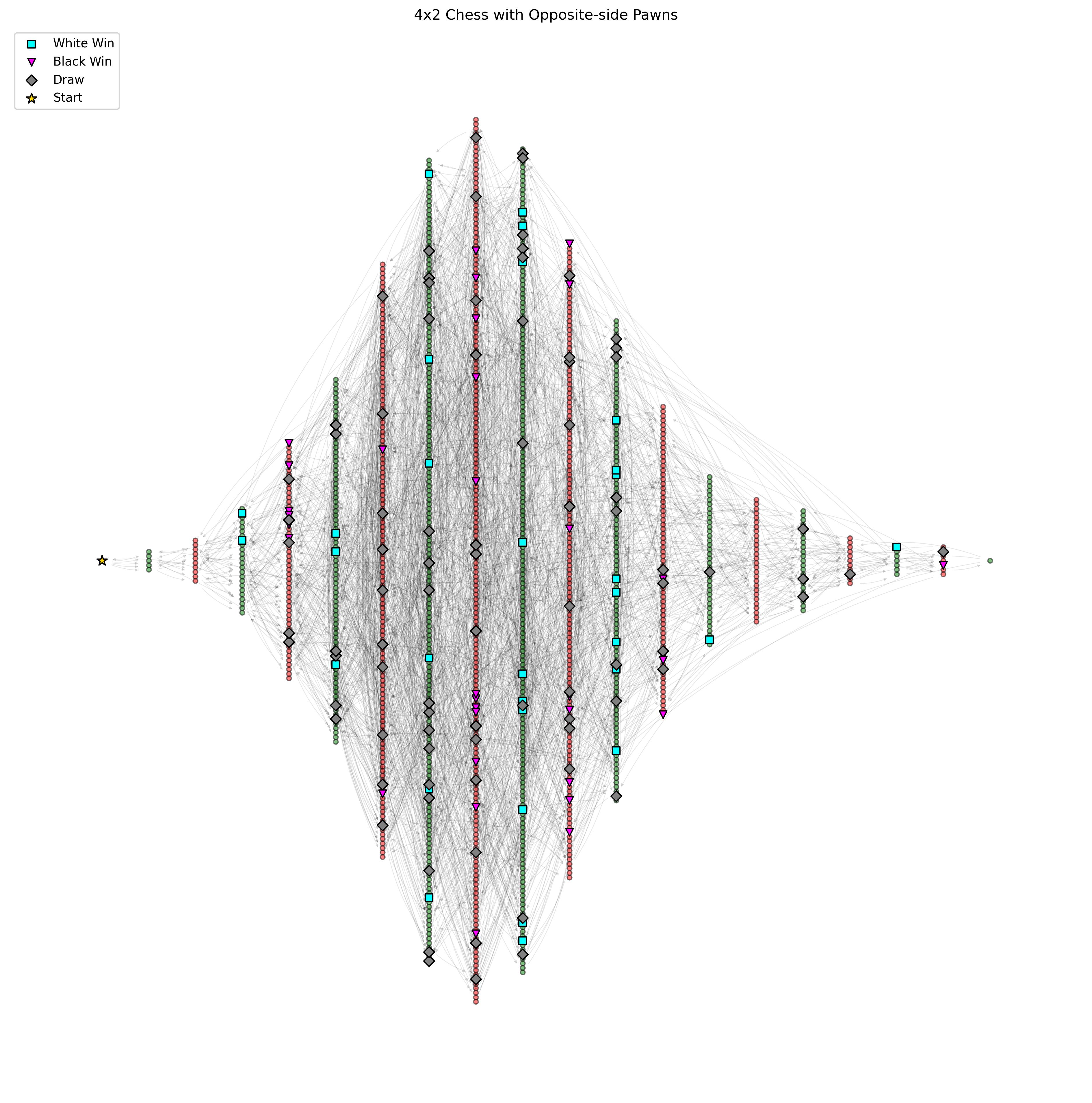}      
   \label{both}
\caption{
The game graph for opposite pawns
}
\end{figure}

\paragraph{}
Here is the $2 \times 4$ game without paws after 2 moves. 
This was generated in Mathematica:

\begin{figure}[!htpb]
   \includegraphics[width=15cm]{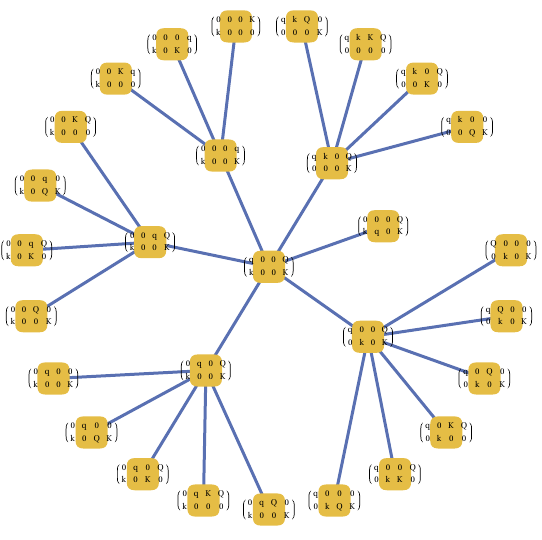}
   \label{no paws}
\caption{
The ball $B_1(v)$ of chess on a $2 \times 4$ board without paws. 
This game is "mate in 1". The queen can move in front of the king 
and force a chess mate. Lets call this position $w$. The winning strategy 
$H$ of $V$ is the subgraph $H$ generated by the edge $v \to w$. All other edges
are cut away. Player $W$ would like to maximize the length of a 
game play in $H$ but the edge degree of $w$ is zero because it is a mate. 
}
\end{figure}

\vfill
\pagebreak

\section{Card games}

\paragraph{}
Consider a deck with two cards: ace and king. Each player gets a card. 
The one who has the ace wins. The game here is represented
by a graph with two disconnected single points
$(V,E) = (\{A\},\{K\}, \emptyset)$. The {\bf god number} is zero. The initial condition
already is either win or lose. 

\paragraph{}
Take a deck with 4 cards $J,Q,K,A$.
Player V takes the first card, player B the second etc. The first one who has the ace
wins. The $V$-winning situations are the permutations, where $V$ is in an odd position.
The $W$-winning situations are the permutations, where V is in an even position. The god
number is $2$.

\begin{figure}[!htpb]
\scalebox{0.15}{\includegraphics{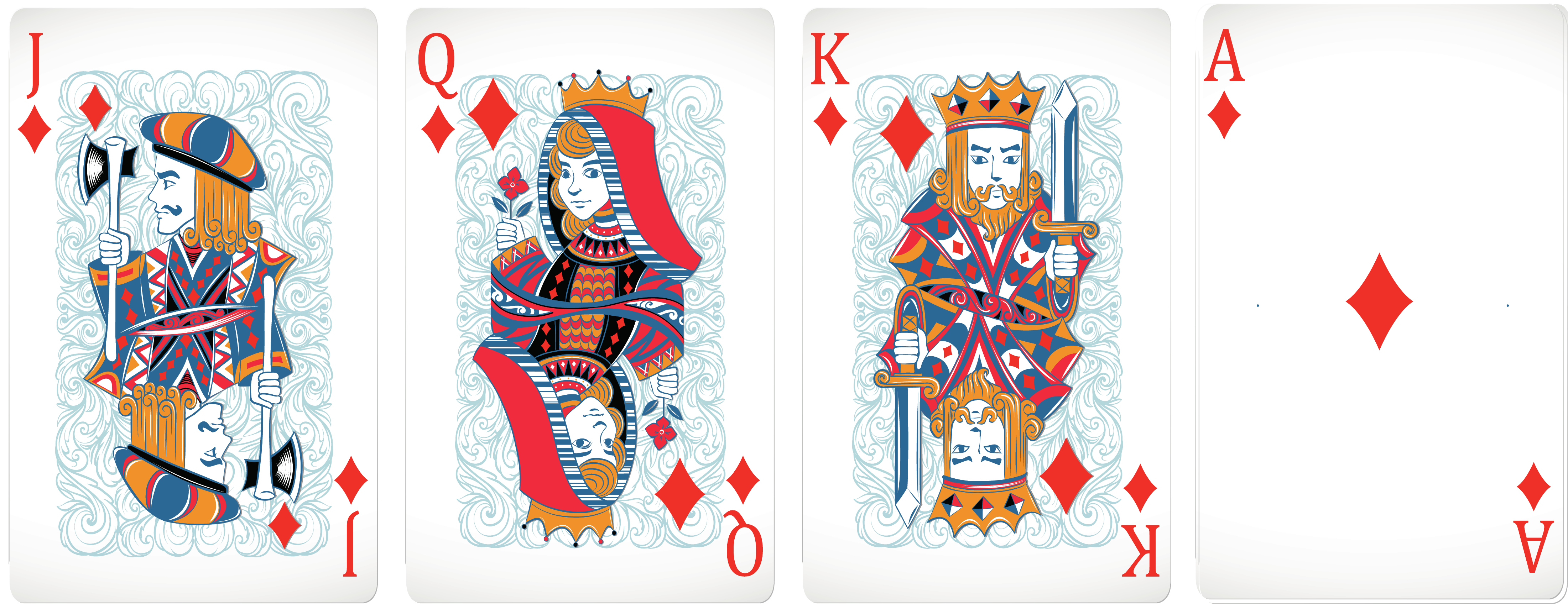}}
\label{A 4 card game}
\caption{
A 4 card game.
}
\end{figure}

\paragraph{}
Is there a simple {\bf Black Jack} 21 type game for which one has a chance
to write down the full game graph? We experimented with
an 11 version, where the highest card is 6 and only
two suits. So there are 5 cards A2,A3,A4,A5,A6 and B2,B3,B4,B5,B6.
What happens with any game with random shuffle is that we need to 
look at all the possible permutation cases in which the card deck 
can be dealt. That is already for such small games heavy, like 10!
\index{Black Jack}
\index{21}
\index{card games}

\paragraph{}
A simple 2-player card game is obtained by placing 3 pair of cards
on the table. A player can flip two adjacent cards if the left is larger
than the right. The cards are organized cyclic. 
The first player who can go from the initial condition 332211 to 112233 wins.
There are $6!/2^3=90$ different positions. Each player is allowed
to switch identical cards during a game so that we
break the parity. The graph diameter is 12, because 12 steps are needed to
order the deck. We probably need to limit the number of identical switches.
Variant: If we allow to switch any two cards (not only adjacent), the graph diameter
is 4. We still have to see whether there is a winning strategy here, especially
due to the passing. It could be that the parity decides.

\begin{figure}[!htpb]
\scalebox{0.73}{\includegraphics{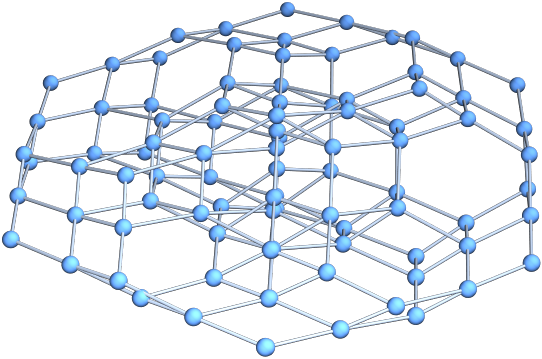}}
\label{Hex}
\caption{
The graph of the card game with 90 positions.
}
\end{figure}

\paragraph{}
To illustrate a 2-player card game, we use a variant of the transposition solitaire.
Choose $2$ queens, $2$ kings and $2$ aces from a standard $52$ card deck. The initial position $v$ is a random shuffle of this.
A player can choose to transpose any two adjacent cards, noting that we utilize a cyclic order. The game ends if the state $QQKKAA$ is reached. 
For example, with the stating arrangement $QAKAQK$, The legal moves are $\{(Q\,A),(A\,K),(K\,A),(A\,Q), (Q\,K), (K\,Q)\}$. 

\paragraph{}
The graph $G$ of the game has $\frac{6!}{2!\,2!\,2!}=90$ nodes and
edges $E$ as the set of pairs $(x,y)$ for which there is a $\tau$ with $\tau(x)=y$ or $\tau(y)=x$. 
Let us look at the neighborhood graphs with radius $d$ and center  $QQKKAA$.

\begin{center}
\begin{tabular}{|c|c|}
\hline
Distance $d$ & Number of nodes \\
\hline
0 & 1 \\
1 & 3 \\
2 & 9 \\
3 & 19 \\
4 & 29 \\
5 & 20 \\
6 & 9 \\
\hline
Total & 90 \\
\hline
\end{tabular}
\end{center}

\paragraph{}
Because every node has been reached after 6 steps, the god number of the game is $6$. Only very close to the 
solution, there is a win. There are three possible initial conditions for which the first player gets to a win. 
The winning initial shuffles for player $1$ to win are $AQKKAQ,\qquad QKQKAA,\qquad QQKAKA$
In all other positions, the second player can always stir away from the win. 

\begin{center}
\begin{tabular}{|c|c|}
\hline
Outcome under Optimal Play & Number of Starting Shuffles \\
\hline
Player 1 wins & 3 \\
Player 2 wins & 0 \\
Draw & 86 \\
\hline
Total & 90 \\
\hline
\end{tabular}
\end{center}

\paragraph{}
The condition of being unable to repeat the transposition used on the previous term means the game is not solely 
determined by the arrangement of the cards, each player much also track which 
vertices were swapped. There are six possible remembered edges,
$$ (1\,2),(2\,3),(3\,4),(4\,5),(5\,6),(6\,1) $$
together with one initial state in which no move has yet been played. 
Thus every arrangement contains one of $7$ memory indicators. Therefore the full state space has size
$$ 90 \cdot 7 = 630  \; . $$
Each state contains not just the arrangement of the card, but also information on which player's move it is, 
and which transposition was made on the previous turn. Computing every state gives the following results, 
\begin{center}
\begin{tabular}{|c|c|}
\hline
Type of expanded State & Count \\
\hline
Winning for player to move & 18 \\
Losing for player to move & 7 \\
Draw & 605 \\
\hline
Total & 630 \\
\hline
\end{tabular}
\end{center}

\paragraph{}
The hexagon's dihedral group $(D_6)$)  works on our initial $90$ 
configurations through rotations and reflections the board. 
Under these transformations, our $90$ positions fall into exactly $11$ symmetry classes,
\index{hexagon dihedral group}

\begin{center}
\begin{tabular}{|c|c|}
\hline
Orbit Size & Number of Orbits \\
\hline
6 & 7 \\
12 & 4 \\
\hline
Total Orbits & 11 \\
\hline
\end{tabular}
\end{center}

We see that many positions are geometrically equivalent.

\vfill
\pagebreak

\section{Rubik type games}

\paragraph{}
For group games and in particular Rubik type games, we work in a {\bf finitely presented
group} $\langle S | R \rangle = \langle g_1, \dots g_m | r_1 \dots r_m \rangle$,
where $S$ are the {\bf generators} and $R$ are the {\bf relations}.
A group can also be defined by giving a finite set of generators in a permutation group.
This is the {\bf permutation representation}.
One usually assumes that the {\bf generator set} $S$ is invariant under involutions. 
This gives god number $20$ for the Rubik cube.
\index{finitely presented group}
\index{generator set}
\index{relation set}
\index{Rubik cube}

\paragraph{}
Many geometric shapes have been made into Rubik type puzzles in particular various $n \times m \times k$
generalizations, or tetrahedra, octahedra or dodecahedra. There are version where the coloring has
changed like {\bf bi-colored cubes} \cite{MillerPhaovibulShen}.
One of the famous examples is the $2 \times 2 \times 2$ Rubik cube already described in \cite{Singmaster1981}.
(it is also  known as the``pocket cube") and where we only allow moves
of 180 turns of two faces L and U. This group has 6 elements and is isomorphic to
$\langle a=(1,2), b= (2,3)  | a^2=b^2=(ab)^3 \rangle = S_3$.
The god number is $3$. For the full pocket graph, the Cayley graph can be constructed
by GAP and the god number determined. See the code section. 
\index{pocket cube}

\paragraph{}
The $2 \times 2 \times 1$ Rubik is called the "Floppy". Its group has 2304 elements. 
The god number is 13. If we work with the floppy, where one of the cubes is fixed, the
number of positions drops to 192 and the God number is 6. 
There are $k \times l \times n$ versions of Rubik cubes as seen 
in Figure~\ref{Rubik}.

\begin{figure}[!htpb]
\scalebox{0.45}{\includegraphics{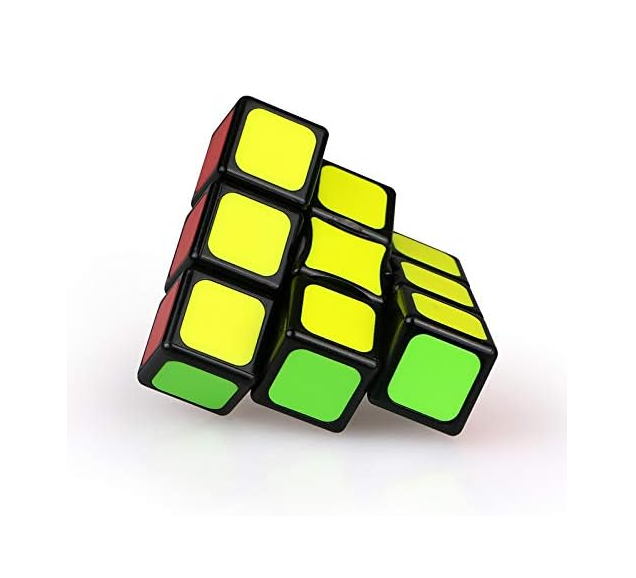}}
\scalebox{0.55}{\includegraphics{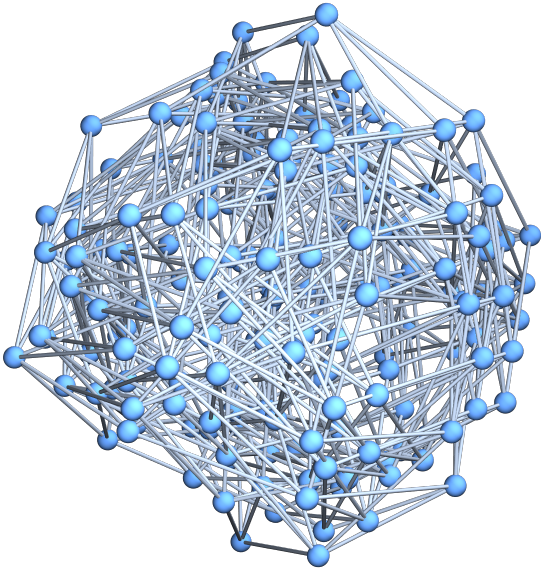}}
\label{The floppy}
\caption{
The {\bf floppy} was designed by Katsuhiko Okamoto. 
}
\end{figure}
\index{floppy}

\paragraph{}
For the $2 \times 2 \times 2$ cube case, the god number is $11$ respectively $14$ is known. 
There are $3'674'160$ positions for the $2 \times 2 \times 2$ (pocket) cube
in the half turn metric the god number is $11$ (analog to $20$ in the 
standard Rubik $3 \times 3 \times 3$ cube).
If one of the cubes is fixed, the number of positions reduces to $8! 3^7$
which is $1/24$ of $3'674'160$.
In the quarter turn metric, the god number of the pocket cube is 
$14$ (this is the analog to the god number $26$ in the $3x3x3$ case).
The GAP code in the code section was able to get this number within a few hours of
run time. 
The subgroup of the full $3 \times 3 \times 3$ Rubik generated by 
180 degree points is a group with 663552 elements and has 
god number 15.
\index{quarter turn metric}

\paragraph{}
Simpler and older than Rubik are interlocked rings, 
called the {\bf Hungarian rings}. There are two cyclic permutations
generating a subgroup of the permutation group such that the 
two cycles intersect in 4 points. 
There are has $38!$ possible positions. Gap was unable to compute 
the diameter in this case. Smaller Hungarian rings can be done.
\index{Hungarian rings}

\begin{tiny}
\lstset{language=Mathematica} \lstset{frameround=fttt}
\begin{lstlisting}[frame=single]
LoadPackage("grape");
a:=(1,38,37,36,35,6,34,33,32,31,30,29,28,27,26,25,24,23,22,21);
b:=(1,20,19,18,17,16,15,14,13,12,11,10,9,8,7,6,5,4,3,2);
G:=Group(a,b); Size(G);
S:=GeneratorsOfGroup(G); C:=CayleyGraph(G,S); Diameter(C);
\end{lstlisting}
\end{tiny}

\begin{figure}[!htpb]
\scalebox{0.25}{\includegraphics{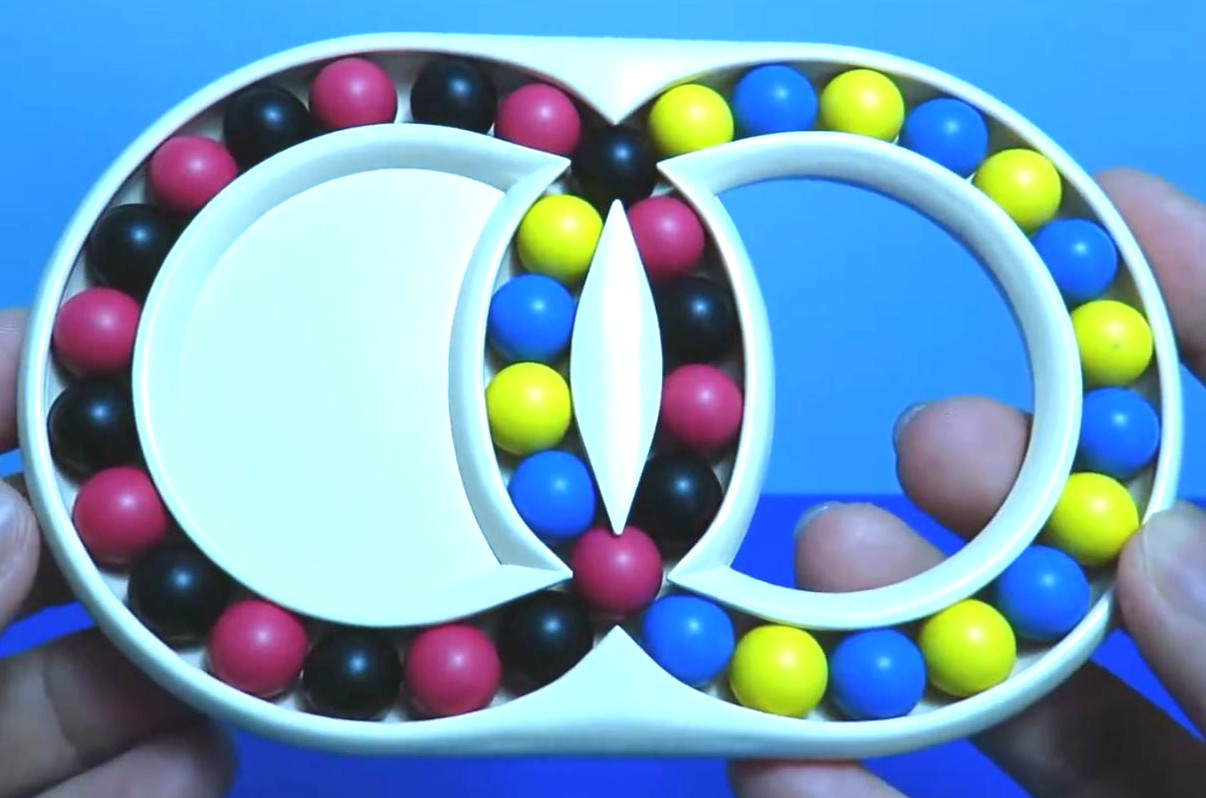}}
\label{Hungarian Rings}
\caption{
Hungarian rings
}
\end{figure}

\paragraph{} 
For the {\bf Rubik's clock}, the god number is known to be $12$.
There are $2^{14}$ positions. The Rubik's clock has lots of generators: 
there are $2^4$ flag positions and in each case $4$ generators. So that
there are $2^6=64$ generators. 
\index{Rubik's clock}

\begin{figure}[!htpb]
\scalebox{0.6}{\includegraphics{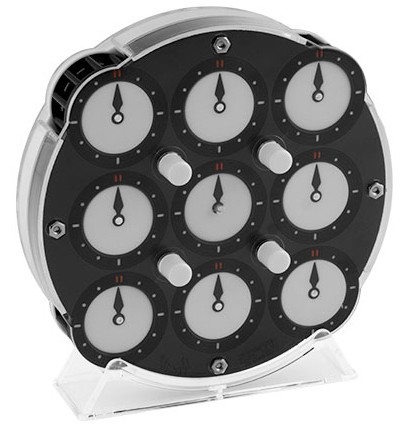}}
\label{clock}
\caption{
Rubik's clock has many generators. 
}
\end{figure}
\index{Rubik's clock}

\vfill
\pagebreak

\section{Tic Tac Toe games}

\paragraph{}
The standard $3^2$ Tic-Tac-Toe is famously known to be a draw \cite{Gardner1959}.
The $n^d$-game looks for a line of length $n$ in a $d$-dimensional hypercube of side $n$. 
The two-dimensional {\bf $m,n,k$-game} ($k$ in a row on an $m\times n$ board) formalise positional 
tic-tac-toe \cite{BerlekampConwayGuy}. 
Hales and Jewett \cite{HalesJewett} used strategy-stealing to show no second player wins an $n^d$-game, and 
that $n^d$ is a first-player win, once $d$ is large relative to $n$ (like $d>3^n$).  
A particular example mentioned in \cite{HalesJewett} is {\bf Gomoku}, which is a {\bf 5 in a row game}. 
It is usually played on a $15 \times 15$ or earlier as $19 \times 19$ board.
\footnote{Gomoku means five pieces in Japanese. It is also called
{\bf Wuziqi} in China or {\bf Omok} in Korea of {\bf Gobang} in Britain}
Patashnik \cite solved the $4^3$ "Qubic". The first-player wins
Allis \cite{Allis1988,Allis1993} showed free-style Gomoku $(15,15,5)$ is a first-player win.
Already the $5 \times 5 \times 5$ Tic-Tac-Toe with $3^{125}$ positions is unsolved 
\cite{BeckCombinatorialGames}.
\cite{HsuKoWu2018} computationally established $(7,7,5)$ and $(8,8,5)$ as draws. 
\index{connect four}
\index{tic tac toe}
\index{Gomoku} 
\index{Omok}
\index{Qubic}
\index{Wuziqi}

\begin{figure}[!htpb]
\scalebox{0.35}{\includegraphics{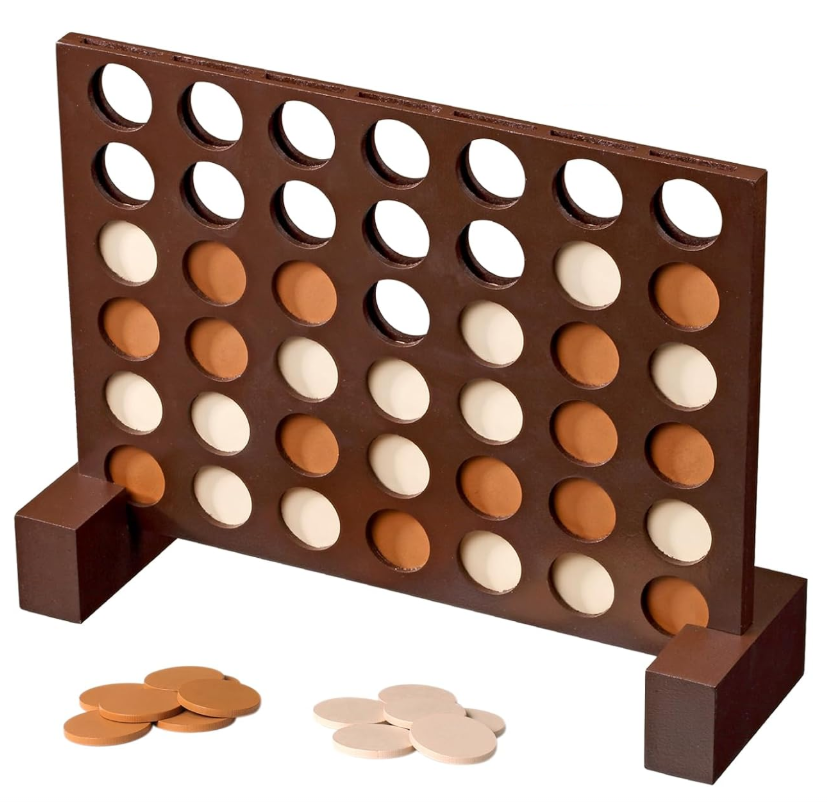}}
\label{Connect 4}
\caption{
Connect 4 game.
}
\end{figure}

\paragraph{}
For $n^d$ games, the following is known:

\begin{center}
\begin{tabular}{@{}llll@{}}
\toprule
Size & God's number & First-player win? & Reference \\
\midrule
$1^d$ & 1 & Yes & trivial \\
$n^1$, $n\geq 2$ & --- & No (draw) & trivial \\
$2^d$, $d\geq 2$ & 3 & Yes & folklore (strategy-stealing) \\
$3^2 = (3,3,3)_{2D}$ & --- (draw at move 9) & No (draw) & [5] \\
$3^3$ (3D, $3\times 3\times 3$) & not precisely established & Yes & [8, 3] \\
$4^3$ (Qubic) & not precisely established & Yes & [7, 8] \\
$n^d$, $d \gg n$ & --- & Yes & [6] \\
$n^d$, $n$ odd, $d \leq f(n)$ & --- & No (draw, pairing) & [6] \\
\bottomrule
\end{tabular}
\end{center}

\paragraph{}
For two dimensional $m,n,k$-games, the God's number is given in brackets:

\begin{center}
\footnotesize
\begin{tabular}{@{}lccccc@{}}
\toprule
Board & $k=2$ & $k=3$ & $k=4$ & $k=5$ & $k=6$ \\
\midrule
$3\times 3$ & Win (2) & Draw & --- & --- & --- \\
$4\times 4$ & Win (2) & Win (3) & Draw & --- & --- \\
$5\times 5$ & Win (2) & Win (3) & Draw & Draw & --- \\
$6\times 5$ & Win (2) & Win (3) & Win & Draw & --- \\
$7\times 7$ & Win (2) & Win (3) & Win & Draw & Draw \\
$9\times 6$ & Win (2) & Win (3) & Win & Draw & Draw \\
$15\times 15$ & Win (2) & Win (3) & Win & Win (Gomoku) & Draw \\
\bottomrule
\end{tabular}
\end{center}

\paragraph{}
From \cite{BeckCombinatorialGames}: 
$k=1,2$ are trivial wins except for $(1,1,2)$ and $(2,1,2)$; $(m,n,3)$ is a win if and only if 
$(m \geq 4, n \geq 3)$ or $(m \geq 3, n \geq 4)$, else draw. 
First general result: $(m,n,k)$ is a draw whenever $m = n = k \geq 3$. 
$(m,n,4)$: draw for $m,n \leq 5$, $(6,5,4)$ win, $(m,4,4)$ win if and only if 
$m \geq 9$; $(m,n,5)$: draw for $m,n \leq 8$; $(15,15,5)$ win, $(9,6,6)$ and $(7,7,6)$ draw via pairings.

\paragraph{}
Tic-Tac-Toe Games are also played {\bf with gravity}:
impose a vertical axis and require pieces to stack from the bottom of each column. It is
the $n^d$ analogue of Connect Four. Strategy-stealing still rules out second-player wins on 
$n^d$, but values change for some 2D $m,n,k$ sizes.
God numbers also can be given in draw situations. In the $3^2$ case, a draw reached at move 
9 as the board is filled.
\index{gravity puzzles}
\index{tic tac toe with gravity}

\begin{center}
\footnotesize
\setlength{\tabcolsep}{4pt}
\renewcommand{\arraystretch}{1.0}
\begin{tabular}{@{}l l l c c l@{}}
\toprule
Size & G & H (gravity) & {\rm God}(G) & {\rm God}(H) & Ref \\
\midrule
$1^d$ & Win & Win (identical) & 1 & 1 & trivial \\
$n^1$, $n\geq 2$ & Draw & Draw (moves forced) & --- & --- & trivial \\
$2^d$, $d\geq 2$ & Win & Win (corner: $2^{d-1}$ threats) & 3 & 3 & folklore \\
$3^2$ & Draw & Connect-3 on $3\times 3$; not in Tromp's tables & 9 & ? & --- \\
$3^3$ & Win & Connect-3 on $3\times 3\times 3$; status open & ? & ? & --- \\
$4^3$ (Qubic) & Win~[8] & Open: Connect-Four 3D on $4^3$ unsolved~[10] & ? & ? & [10] \\
$(5,5,4)$ & Draw & Draw & --- & --- & [9] \\
$(6,5,4)$ & Win & Draw & ? & --- & [9] \\
$(6,6,4)$ & Win & Second-player win & ? & ? & [9] \\
$(7,6,4)$ Connect 4 & Win & Win, forced by move 41 & ? & 41 & [1, 12] \\
\bottomrule
\end{tabular}
\end{center}
\index{Tromp's tables}

\paragraph{}
Gravity preserves the god's number for $1^d$, $n^1$, and $2^d$, but can decrease the number of playable, 
winnable lines for the first player. Qubic without gravity is a first-player win \cite{Patashnik1980}. 
With gravity ($4\times 4\times 4$ {\bf Connect-Four 3D} was solved independently by Allen and Allis in Oct 1988.
For $7\times 6$), it remains open. 
Gravity can even flip $(6,6,4)$ from a first-player win to a second-player win 
\index{qubic}

\paragraph{}
For $q=2$, we have looked at the $2 \times 2$ case, which is win for the starter with
god number 2.  The case $3 \times 3$ which is always a tie with ${\rm ceiling}(n^2/2)$
god number. For $3 \times 3 \times 3$, there is always win.

\vfill
\pagebreak

\section{Hanoi puzzles}

\paragraph{}
The {\bf tower of Hanoi} $T_3(n)$ was posed by Edouard Lucas in 1883 with the legend of 
64 golden disks~\cite{lucas1883}. In this three-peg $p=3$ situation, the god number is $2^n-1$. 
It is a standard example in many programming textbooks like \cite{Roberts2006}.
Dudeney~\cite{dudeney1907} introduced the {\bf Reve's puzzle} $T_4(n)$, which is 
the four-peg version.  Frame and Stewart~\cite{frame1941,stewart1941} 
independently proposed a divide-and-conquer 
recurrence for $p \ge 4$, later shown equivalent to seven other algorithms \cite{hkmp}. 
\cite{chenshen} (building on Szegedy 1999) established the correct order of 
magnitude. Bousch \cite{bousch2014} proved that Frame-Stewart is optimal for $p=4$.
We see below the graphs for $p=4$ and $n=2,3,4,5,6$.
Optimality for $p \geq 5$ is open. See the book \cite{hkmp}.
\index{tower of Hanoi}
\index{golden disks}
\index{Reve's puzzle}

\begin{center}
\small
\begin{tabular}{@{}p{4.3cm} p{6.3cm} p{2.5cm} p{2cm}@{}}
\toprule
Variant & $T_p(n)$ / god's number & Status & Reference \\
\midrule
Lucas 1883     $p=3$ & $2^n-1$                   & Proven          & \cite{lucas1883,hkmp} \\
Reve's puzzle, $p=4$ & Frame--Stewart recurrence & Proven & \cite{bousch2014} \\
$n=1$                & $1,3,5,9,13,17,25,33,41,49$ & Proven & \cite{oeisA007664} \\
Multi-peg, $p \ge 5$  & Frame--Stewart recurrence & Conjectured & \cite{frame1941,stewart1941,chenshen} \\
F-S formula         & $F(p,n)=\min_{1\le i<n}\{2F(p,i)+F(p{-}1,n{-}i)\}$ & --- & \cite{hkmp} \\
asymptotic order     & $2^{(1\pm o(1))\,(n(p-2)!)^{1/(p-2)}}$ & Proven & \cite{chenshen} \\
Cyclic (directional) Hanoi, $p=3$ & exponential, closed recurrence & Proven & \cite{atkinson1981} \\
Arbitrary-to-arbitrary, $p=3$ & diameter of $H^n_3$ still $2^n-1$ & Proven & \cite{berend2006} \\
Arbitrary-to-arbitrary, $p\geq 4$ & diameter unknown in closed form & Open & \cite{berend2006} \\
\bottomrule
\end{tabular}
\end{center}

\paragraph{}
The game graph is the  {\bf Hanoi graph} $H_p^n$ in which the vertices are the $p^n$ 
strings in $\{1,\dots,p\}^n$ encoding the disk $i$ on peg $s_i$. 
The order on each peg is forced to be decreasing in size) and edges connect 
configurations differing by one legal move.
The diameter of $H_3^n$ is $2^n-1$, attained between any two perfect states~\cite{hkmp}.
$|V(H^n_3)|=3^n$ and $|E(H^n_3)|=\frac{3(3^n-1)}{2}$. \cite{sgs1944}.
\index{Hanoi graph}

\paragraph{}
$H^n_3$ is three copies of $H^{n-1}_3$ joined at their extremal vertices by bridging edges~\cite{hkmp}.
It is planar, a penny graph, and its drawing converges to 
the Sierpinski triangle~\cite{hinz1992}. 
The vertex set coincides with the odd entries of the first $2^n$ rows of {\bf Pascal's triangle}.
Average distance, chromatic, and metric properties are given in \cite{hkmp}.
The graph $H_p^n$ has $p^n$ vertices; its diameter is unknown for $p \geq 4$. 
For $p>4$ with $n>0$, or $p=4$ with $n>2$, $H^n_p$ is non-planar~\cite{hinz2002}
\index{Sierpinski triangle}
\index{Pascal triangle}

\begin{figure}[!htpb]
\scalebox{0.15}{\includegraphics{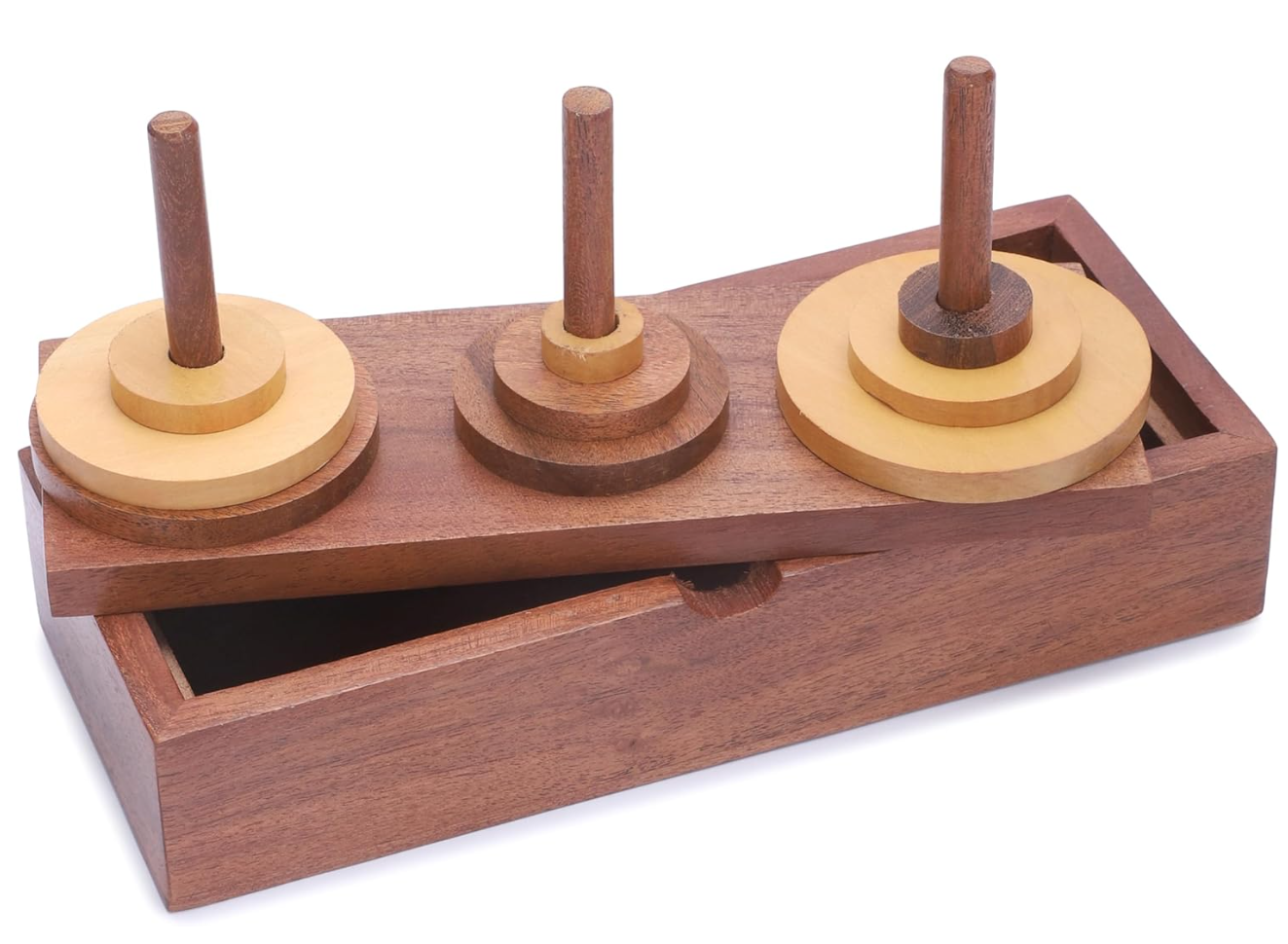}}
\label{Tower of Hanoi}
\caption{
The tower of Hanoi with n pieces has god number $2^n-1$ }
\end{figure}

\begin{figure}[!htpb]
\scalebox{0.69}{\includegraphics{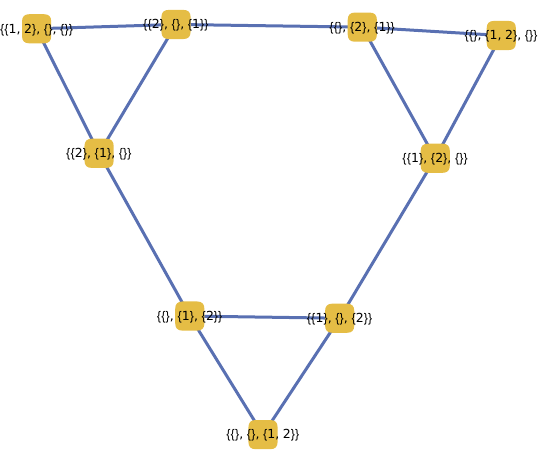}}
\scalebox{0.69}{\includegraphics{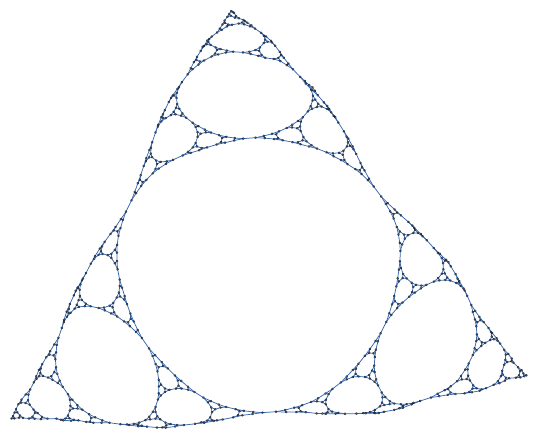}}
\label{Tower of Hanoi}
\caption{
The Hanoi graph in the case $n=2$ has $9$ vertices, $12$ edges
and diameter $3$. We see also the graph for $n=6$, where we have
$|V|=729$ and $|E|=1092$. The positions where all disks are on one peg
are on the boundary. 
}
\end{figure}

\begin{figure}[!htpb]
\scalebox{1.2}{\includegraphics{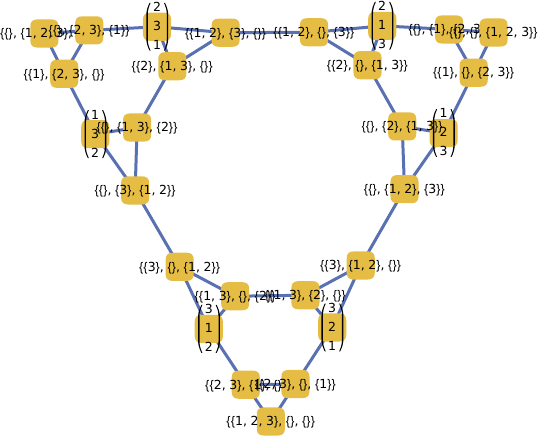}}
\label{Tower of Hanoi for n=3}
\caption{
The Hanoi graph for n=3. Going along the boundary is 
the optimal path. It recursively first uses the 
solution for n=2, then moves the entire pile and then
again uses the solution for n=2. 
}
\end{figure}

\paragraph{}
The Hanoi problem with $p=4$ pegs is not explored for large n. 
The 4 peg problem is a more interesting transport problem 
as one can now work more effectively using two "storage pegs". 
The game graph obviously has a $S_4$ symmetry, which leads to a
tetrahedral symmetry when plotted in $3$- dimensional space $\mathbb{R}^3$. 

\begin{figure}[!htpb]
\scalebox{0.65}{\includegraphics{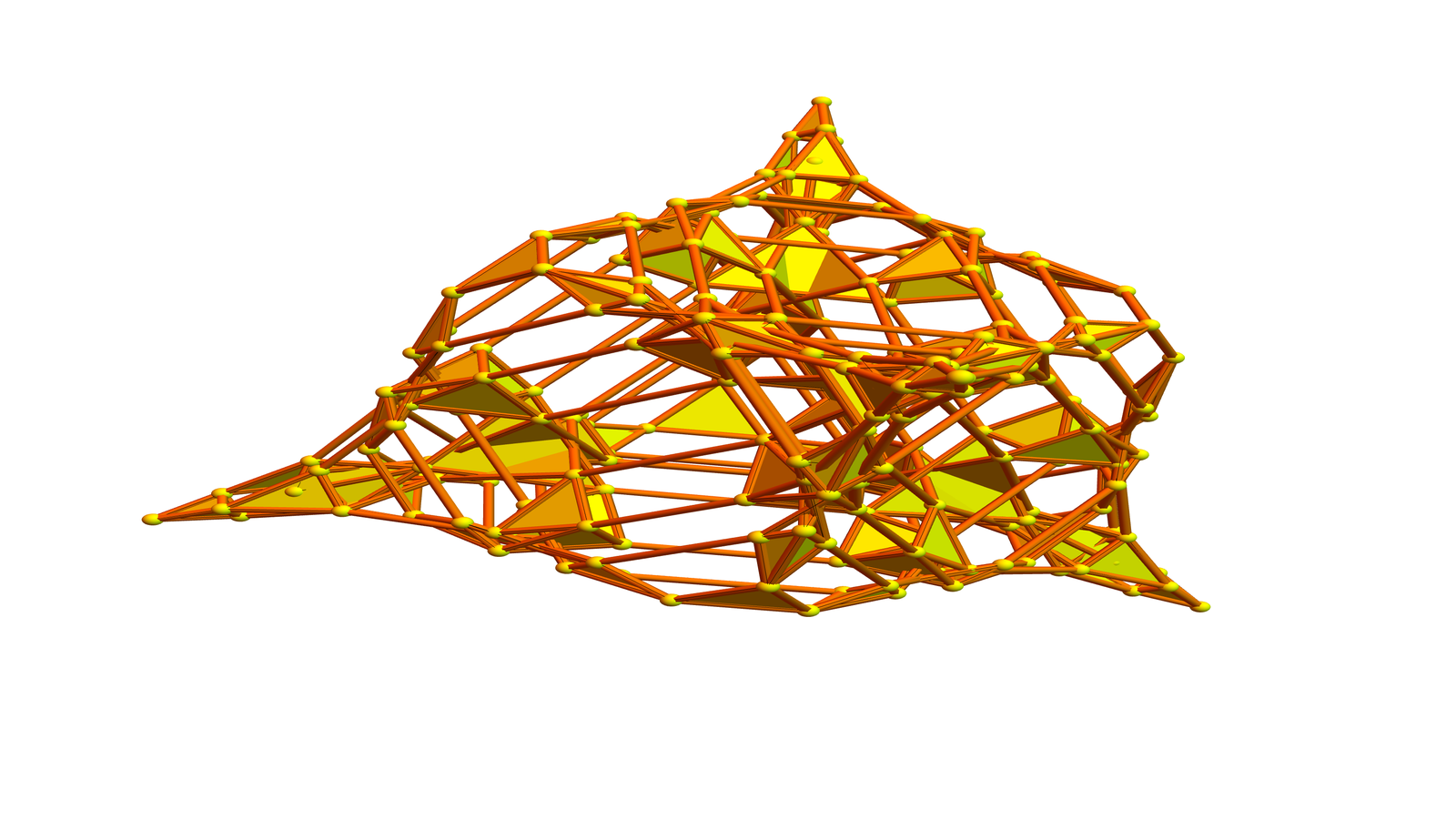}}
\label{Reeves puzzle}
\caption{
The graphs of the Reeves puzzle, ($p=4$-peg Hanoi puzzle) have smaller diameter
as we have more transport possibilities. We measure the graph
diameters ${\rm God}(2)=3,{\rm God}(3)=5,{\rm God}(4)=9,{\rm God}(5)=13, {\rm God}(6)=17$. We see
here the case $n=4$. 
}
\end{figure}

\vfill
\newpage

\section{Other games}

\paragraph{}
{\bf Hex} \cite{Hex2022} is a two-player game that has relations to theorems in topology.
It can be played on any rectangle \cite{AlbertNowakowskiWolfe}. It was invented in 1942 by Piet Hein
and rediscovered by John Nash in 1948. Hex is a paradigm because of the {\bf Hex theorem}  telling that
it never leads to a draw. It has been used to prove the Brouwer fixed point theorem \cite{Gale}, who
also pointed out that it is also related to the Jordan curve theorem. David Gale also covert the 
n-dimensional Hex theorem on a $n$ dimensional cube which belongs to a n-person Hex game. Gale writes
{\it From the game-theoretic point of view, n-person Hex is probably not of much interest. 
Aside from the mechanical problem of designing a practical board there is the usual difficulty of
multi-person games that players may form coalitions, requiring more sophisticated analysis how
how such coalitions can be formed and play against each other.}
\index{David Gale}
\index{Hex game}
\index{Jordan curve theorem}
\index{Brouwer fixed point theorem}

\paragraph{}
Hex type games can be played on any finite simple graph $\Gamma$. Each player selects out
an edge. The first one who has a closed simple loop of the same color wins.
A famous variant is a square grid on a torus. Player A needs to build a path from
left to right not crossing any path of B. Player B needs to build a path from
bottom to top to bottom. The  most famous variant is "Hex" which has been popularized
by Nash (but Nash would play on a square board, not a hexagonal lattice). 
It is an example where the first player has a winning strategy.
\index{Hex game}
\index{Nash}
\index{topological games}

\begin{figure}[!htpb]
\scalebox{0.15}{\includegraphics{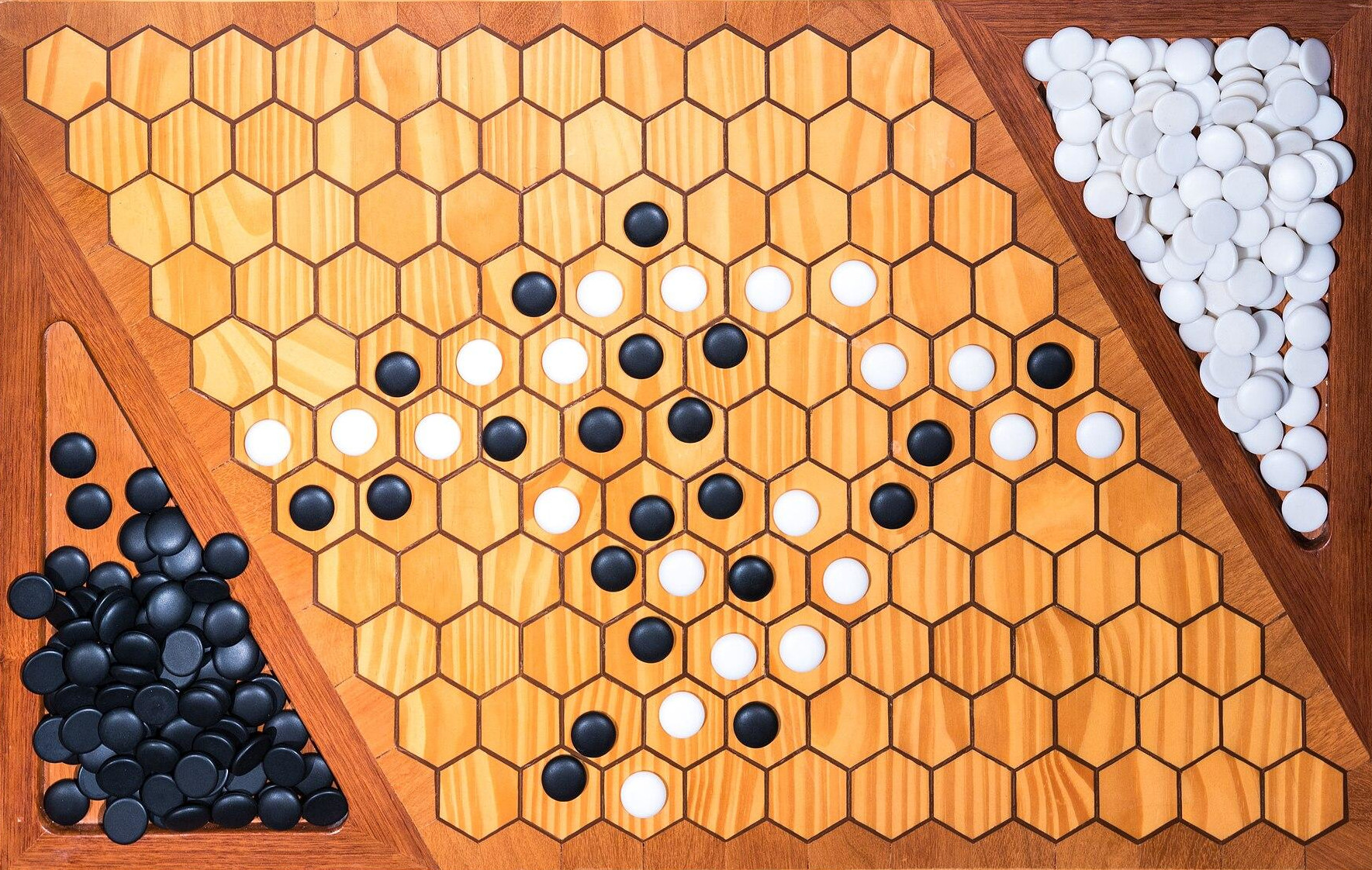}}
\label{Hex game}
\caption{
In hex, there is no draw.
}
\end{figure}

\paragraph{}
{\bf Hex on a torus} is considered to be harder than the
standard hex. Also here, the advantage of the first player
can be balanced with the pie rule telling that after the first
move the second player can decide whether to take over the
position of the first. Also for Torus Hex
there is no tie because of topology. Also here,
The first player always wins because if the second
had a strategy, it could be used by the first without
the first stone. The topology is related to
Brower fixed point theorem and Sperner lemma.
God number is larger or equal to $n$.
The $3 \times3$ Hex board is isomorphic to Tic-Tac-Toe.
It is also related to the P-NP problem.
\index{Hex on a torus}
\index{Torus hex}
\index{Brower fixed point}
\index{Sperner lemma}

\paragraph{}
Related to Hex is the {\bf Shannon switching game}. As Martin Garner reported in one
of his Scientific American puzzles and diversions articles, the game has first been described
by Claude Shannon, in the early 1950ies. Take an arbitrary graph $\Gamma$ in which 
two subsets $A,B \subset V(\Gamma)$ are selected.  
Two players take turn coloring the edges $E(\Gamma)$ of $\Gamma$. The first one who
can build a connection between $A$ and $B$ wins. When played on a rectangular grid, 
it is the game {\bf Bridg-It} described by David Gale a few years after Shannon and
this was also described in one of the Scientific American columns. 
The players are usually denoted as V="Short" and W="Cut". While ``Hex" is played on 
vertices of a graph, the Shannon switching game is played on edges. Like Hex, there
is always a winner in this game, no draw. 
\index{Bridg-it}
\index{Shannon switching game}

\paragraph{}
{\bf Peg games} are {\bf solitaire games}. The god number is explicitly known
for some.  The topology decides whether there is a win or not. Which
topologies allow a reduction to a point? It can be NP-Hard.
\cite{UeharaIwata,KiyomiMatsui2001}.
In the case of a triangular
peg game with side length $3$, the game is a draw. There is simply
no path to the solution where only one peg remains.
The most common peg solitaire is probably the English board on 44 holes.
The European board has 4 holes more. The general problem is called also $Hi-Q$
and  NP-complete.
\index{Peg games}
\index{Hi-Q}
\index{English board}
\index{European board}

\begin{figure}[!htpb]
\scalebox{0.55}{\includegraphics{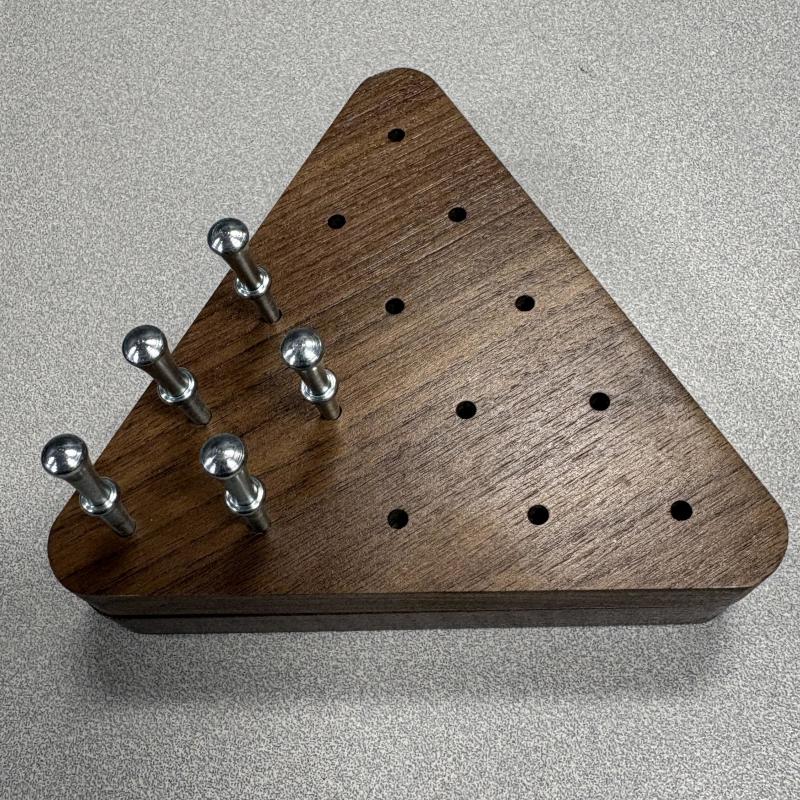}}
\label{Peg Solitaire}
\caption{
A peg solitaire where we play on a triangular grid
with 6 holes. The 9 holes to the right are considered off grid. In this
case the game has no solution with this initial condition $v$, where
$A$ is the set of configurations with one peg. 
}
\end{figure}
\index{peg solitaire}

\begin{figure}[!htpb]
\scalebox{0.85}{\includegraphics{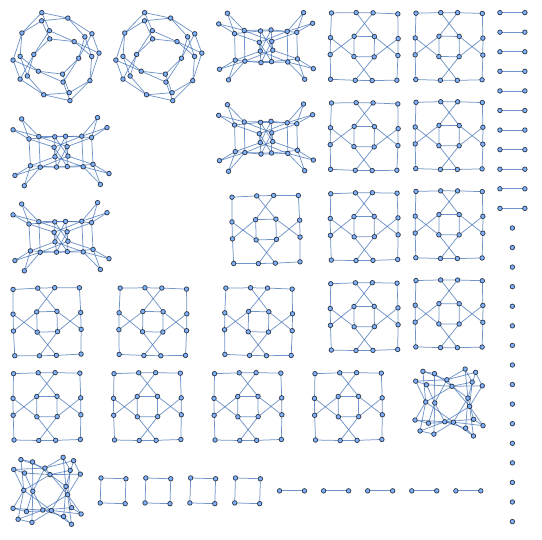}}
\scalebox{0.85}{\includegraphics{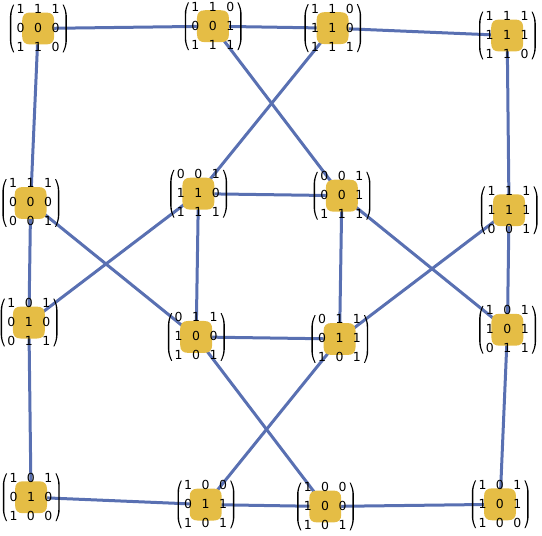}}
\label{Peg Solitaire}
\caption{
To the left we see the full game graph on a $3 \times 3$ lattice Peg solitaire.
It has $2^9=512$ vertices. 
To the right we see the connected component of an initial
condition with one peg missing (upper right vertex). 
The game can not be solved. The code producing this graph is below.
}
\end{figure}

\paragraph{}
The 80ies were already in a frenzy about Rubik type games. 
There were Hungarian ring type games like the {\bf Nintendo drum}
in which 25 balls could be rotated around or then the 
{\bf masterball} in which a sphere is cut into $4*8=32$ pieces. 
\footnote{Also called GeoMaster. Not to be confused by much later 
considered Hanayama Polekmon puzzle}
The user could rotate by 180 degrees along a longitudinal line
or then by 360/8 degrees any of the 4 latitude layers. 
The permutation group has $16!^2=437763136697395052544000000$ 
which is a subgroup of $S_{32}$. 
\index{masterball}

\begin{figure}[!htpb]
\scalebox{0.39}{\includegraphics{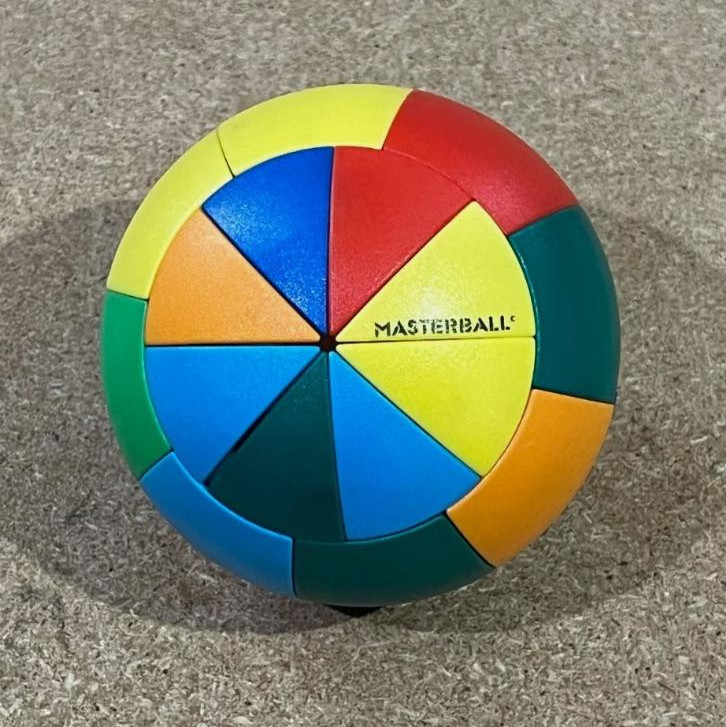}}
\label{Masterball}
\caption{
The masterball puzzle is a Cayley graph in the group 
$S_{16} \times S_{16}$. The puzzle is still sold on ebay (picture).
}
\end{figure}

\paragraph{}
{\bf NIM} is 2-player game in which the goal is to 
take the last object. Each player can take an arbitrary number of
objects from one pile. For two piles, there is a winning strategy for
the first player if the pile sizes are different. Otherwise, the second
player wins. If both piles have n pieces for example, then the god number
is $n$.  
\index{NIM}

\paragraph{}
Shannon writes in \cite{Shannon1950}:
{\it``In some games, there is a simple evaluation function $f(P)$ which can be applied to a
position P and whose value determines to which category (won, lost, etc.) the position P
belongs. In the game of NIM (Hardy and Wright, 1938), for example, this can be
determined by writing the number of matches in each pile in binary notation. These
numbers are arranged in a column (as though to add them). If the number of ones in each
column is even, the position is lost for the player about to move, otherwise won.}
\index{NIM}
\index{value function}

\paragraph{}
An other class of arithmetic games are {\bf Zeckendorf games}. 
Like NIM or alpha-zero \cite{BeneshCampbell} it is based on arithmetic. 
The Zeckendorf game is based on the fact that every integer can be written as a sum of
non-adjacent Fibonacci numbers. The now starts with a number and ends when the number is
Zeckendorf decomposed. The game always ends as a win for one of the players.
For example, for n=13, player 2 has a winning strategy. For n=32, player 1 has a winning strategy. 
\cite{CashmanMillerShuffeltonSon} displays an example where a full game tree can be displayed. 
\index{Zeckendorf games}
\index{game tree}

\paragraph{}
For {\bf checkers} we have a forced draw.
A position with 24 kings could be reached in 211 moves.
On a $10 \times 10$ board, a proof game can last up to $389$ moves.
On an $n \times n$ board, it is PSPACE-hard to determine whether
a specified player has a winning strategy.
\index{checkers}
\index{PSPACE hard}

\paragraph{}
There are 3,6 or 9 Men's {\bf Morris} versions. It is known {\bf M\"uhle}=mill
in German.  In {\bf 9 Men's morris}, the players place and move nine men
to form "mills" which are three in a row, allowing to remove
an opponents piece. The game ends when a player is 
reduced to two pieces. The Nine Men's morris goes back to the Roman empire.
This game is solved in the sense that we know it is a "draw" case in the
Zermelo trichotomy. The Twelfe Men's morris adds four diagonal lines and
is known as {\bf Morabaraba}.
\index{Morris}
\index{9 mens's morris}
\index{mills}
\index{M\"uhle}
\index{Morabaraba}

\begin{figure}[!htpb]
\scalebox{0.84}{\includegraphics{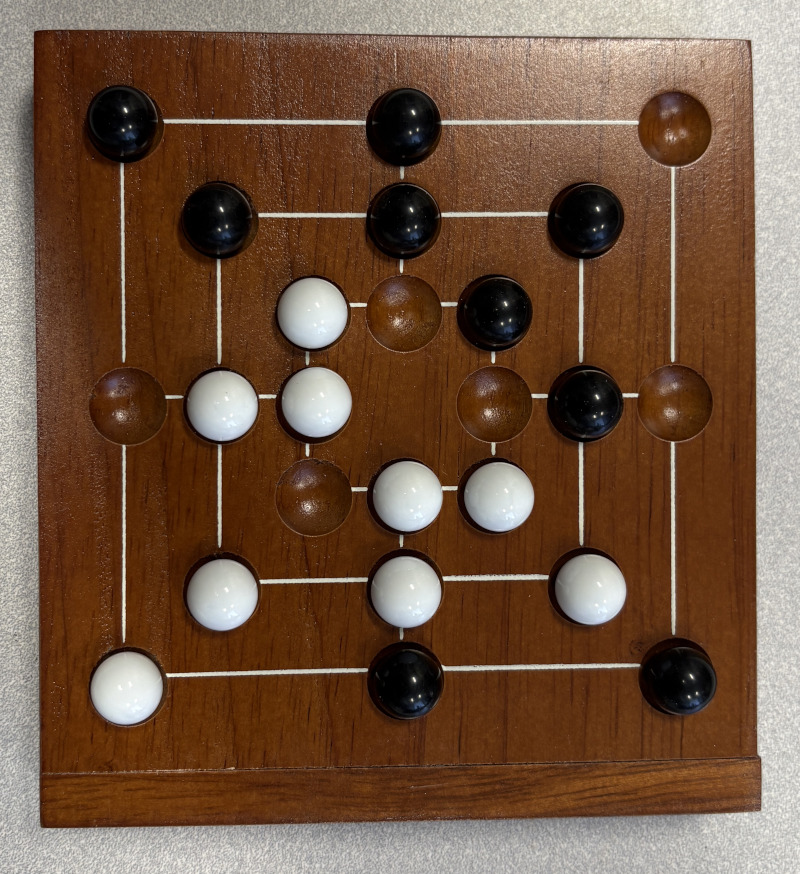}} 
\label{Morris}
\caption{
Nine Men's Morris is known to be a draw case in the Zermelo theorem. 
}
\end{figure}

\paragraph{}
{\bf Mankala} is a 2-player game that is also known as {\bf Serrata}. 
Its transport nature makes it resemble the more popular dice game 
{\bf Backgammon} but it is completely  deterministic.
The player who takes the last object loses. A game is
encoded as a vector $n=(n_1,\dots,n_k)$. The parity of the
binary digital sum $n_1 \oplus n_2 \oplus n_k$ decides then,
who wins. On smaller boards and less stones, one can write
down the entire game graph. 
\index{Mankala}
\index{Serrata}
\index{Backgammon}

\begin{figure}[!htpb]
\scalebox{0.79}{\includegraphics{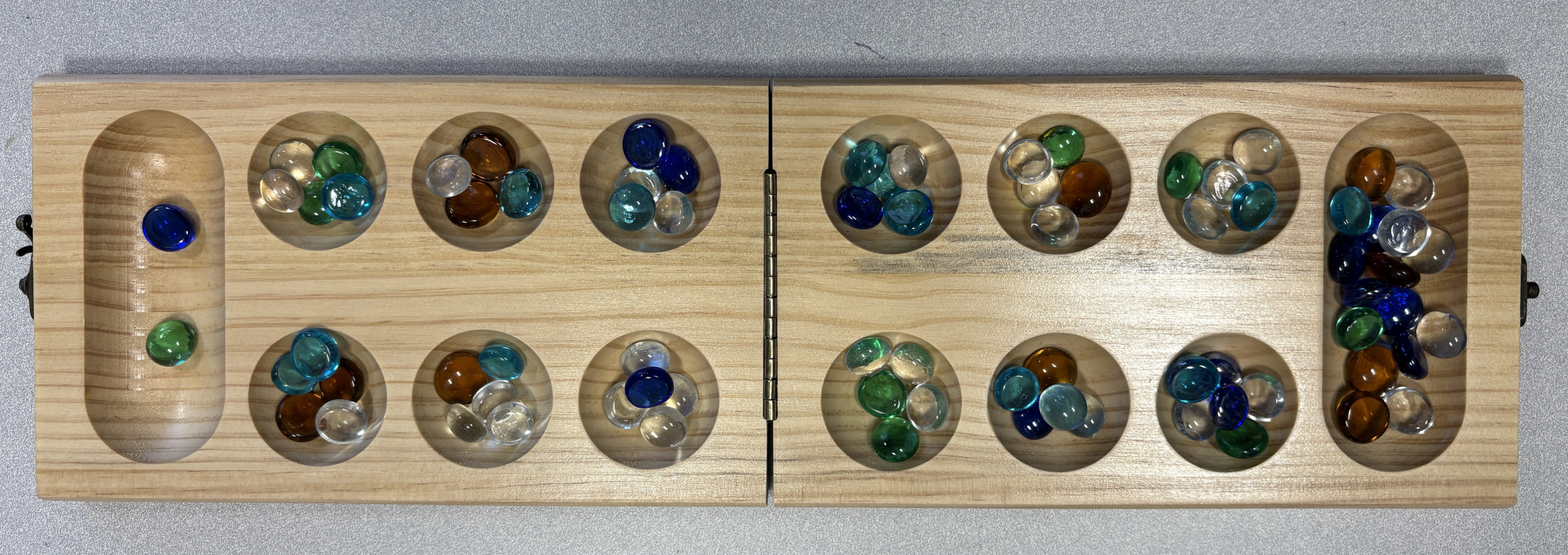}} 
\label{Mankala}
\caption{
Mankala is also known as Serrata. Unlike Backgammon, it is deterministic. Each player has pits and a bank 
to the right. The goal is to cash in as many stones as possible. To play,a  
player takes a pile from its own side and distributes the stones anti-clockwise, one by one. When ending in 
the bank, the player can play again. Play until no stones are any more in the pits. 
The game produces a complex game tree. As in chess, one can for small $n$ and $m$ (n pits, m stones
initially in each pit), write down the game tree. For $n=2,m=2$ for example, the first player wins. 
}
\end{figure}

\paragraph{}
{\bf Tetris} leads to polyomino packing problems. These
are solitaire games.  Polynomios are also called {\bf lattice animals}.
Tetris problems are also a source of nice 1-player
games. They are related to lattice animals (polyominos).
The first interesting puzzle can be done with the 
{\bf 5 tetrominoes}. By area they could fit into a 4x5 box
but a checkerboard argument shows that we would need
11 colors of one color and 9 of an other because the
T animal has 3 of one color and 1 of the other while
all other animals have 2 of one color and 2 of the other.
Famous are the 12 free pentominoes: there are 2339 ways to
arrange them in a $10 \times 6$ rectangle.
\index{pentominoes}
\index{polyonimos}
\index{tetris}
\index{tetrominoes}
\index{lattice animals}

\begin{figure}[!htpb]
\scalebox{0.4}{\includegraphics{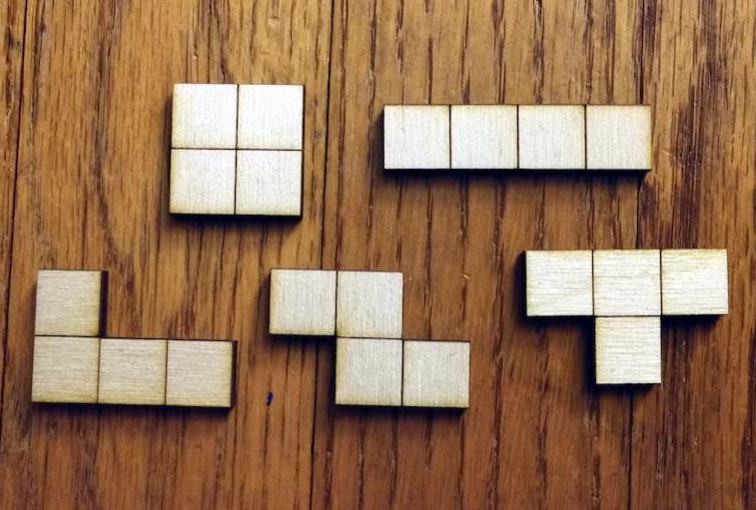}}
\label{tetrominoes}
\caption{
Tetrominoes
}
\end{figure}
\index{tetrominoes}

\paragraph{}
A more recent sliding game with larger hole size is the {\bf cross}. It is a permutation
sliding game on $S_{13}$ with 4 holes so that a-priori only $13!/4!$ positions have to be 
considered. Since there is ample of space the sorting can go quite efficient. There is
a difficulty in that the balls are moved by gravity. One can naturally see this game 
however as group game as it can be seen as a finitely presented group with 4 generators.
Each generator $t,b,l,r$ (for top bottom, left right) tilts the game physically in space so 
that one of the ends of the cross is at the bottom. The generators satisfy $tb=bt$ and $lr=rl$. 
A possible element in that presented group is $tltltlbr$. We have not yet looked at the
exercise to find the relations in this finitely presented group.

\begin{figure}[!htpb]
\scalebox{0.9}{\includegraphics{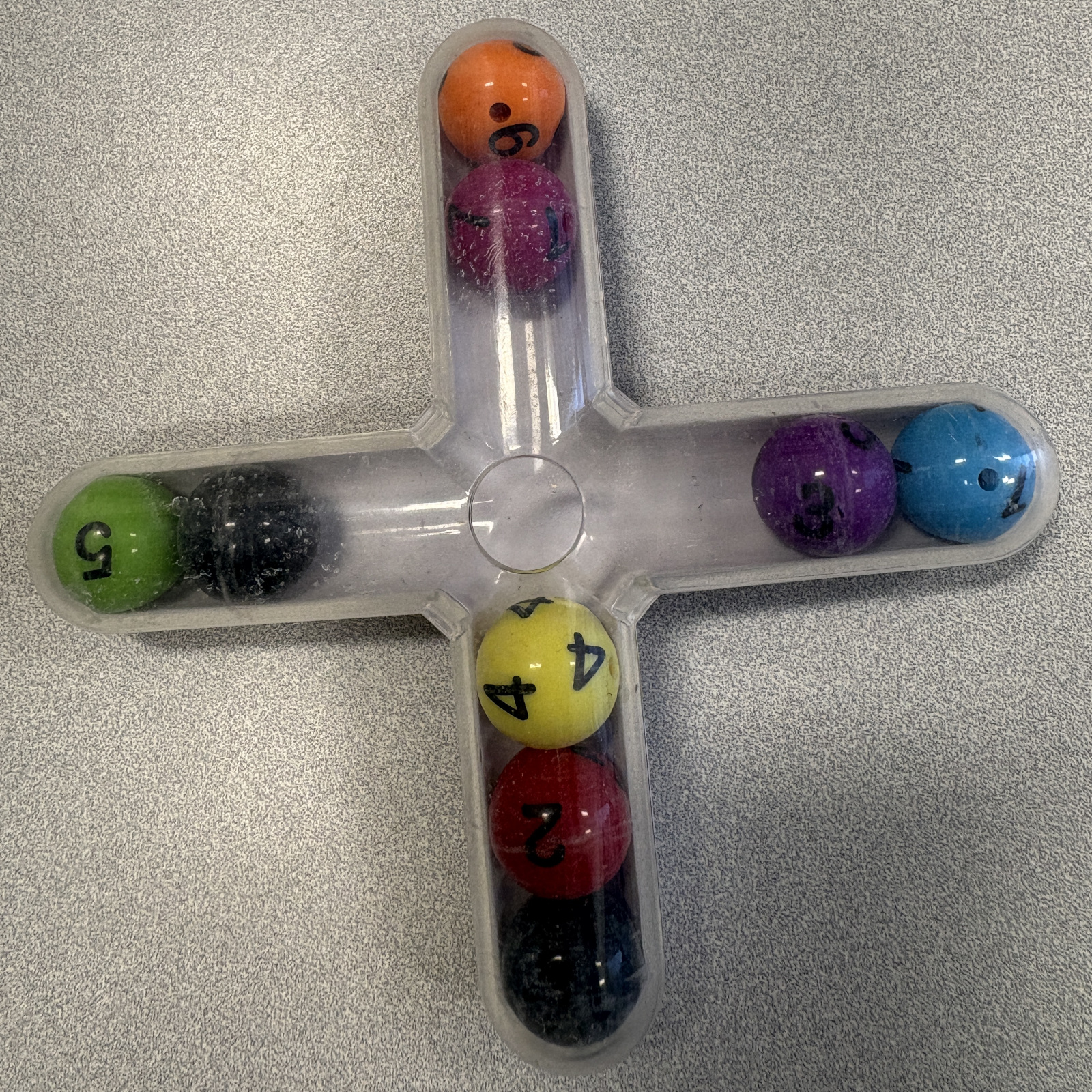}}
\label{cross}
\caption{
Cross parking game. An example of a sliding game with more holes.
(was bought at the MoMath Museum in New York).
}
\end{figure}
\index{cross parking game}

\paragraph{}
New games are also developed in modern times. An example {\bf alpha-zero} described in \cite{BeneshCampbell}.
In one of the games there, one has a stack $Z=\{1, \dots, n\}$. The bank starts with a total $0$.
Player $V$ picks a number from $Z$ and either adds or multiplies it to the total. The number is
removed from the stack. The second player now picks a number from the stack and adds or multiplies
it to the total. The first player who reaches a total that is multiple of n wins. 
This can also be played with other stacks like a collection of cards and a given number $n$. 
\index{alpha-zero game} 

\vfill
\pagebreak

\section{Remarks}

\paragraph{}
Conway constructed numbers as a subclass of games \cite{numbersgames}. 
It is concept allowing to construct {\bf surreal numbers}. 
The axiomatic setup of {\bf Conway's combinatorial game theory} can describe any
two-player game with perfect information and no chance.
Donald Knuth wrote a novel \cite{knuthnumbers} about it, in which two people found themselves on a deserted 
island with a stone telling some of the axioms of the concept. Knuth's title in English was
{\it Surreal Numbers: How Two Ex-Students Turned On to Pure Mathematics and Found Total Happiness.}
The German title of that book was {\bf island of numbers}. Note however that even for simple games like
Tic-Tac-Toe, one would hardly write the game as part of the "numbers of games" paradigm. 
\index{surreal numbers}
\index{numbers and games}
\index{Conway's combinatorial game theory}

\paragraph{}
Many solitaire game can be made into a 2 player game where two-players take turn. 
The 6-card game we have seen is first of all a simple solitaire sorting game.
The one who lands in the win position wins. This is in general not that difficult
to analyze. Such games can be played on any Cayley graph. One could also give each 
player the option to lock one of the generators. 
For transposition games where $G=S_n$ one can naturally take $V=A_n$ the 
set of even permutations and $W=S_n \setminus V$, the set of odd permutations.
After choosing $v$ and $A$ one has a game.

\paragraph{}
God number problems are related to {\bf run length analysis} of algorithms, like sorting or
path finding. A {\bf breadth first search} builds up larger and larger neighborhoods.
To get from a neighborhood of all points in distance $r$, build all
unit balls from all vertices of the boundary to get a ball of radius $r+1$.
This is {\bf Breadth-First Search}. 
Once all vertices are covered, we have the largest distance from $x$.
This can be inefficient. It can be that a vertex $x$ has $S_r(x)=G$
but $r$ is smaller than the spectral radius. A simple example is the wheel
graph, where the center $x$ gives $B_1(x)=G$ but where the graph diameter
is $2$. 
\index{run length analysis}
\index{breadth first search}
\index{BFS}

\paragraph{}
A general strategy to generate 2 player game is to take a graph, select
a certain geometric property for partially 2-colored graphs like
having a path graph of length 4 with one color. Then have two-players
place colors on edges. The first one who realizes the pattern wins. 
Already in the solitaire game, finding patters is NP complete in general. 
Finding complete sub-graphs of a graph for example is NP complete. 

\paragraph{}
The collection of all solitaire games could be made into a {\bf category}. 
The objects are the graphs a homomorphism from $(V,E,v,A)$ and $(W,F,w,B)$ 
would be a map $\phi: V \to W$  that maps edges to edges or vertices
and which maps $v$ to $w$ and $A$ to  a subset of $B$ and for which the inverse $\phi^{-1}(y)$ is
a connected sub-graph of $(V,E)$ for every $y \in W$. Homomorphisms do not honor "win" however in
general.  Two games are {\bf isomorphic} if there exist a graph isomorphism that maps $v$ to $w$
and $A$ to $B$. Isomorphic games have the same god number. An example are the following two $2x2x2$ Rubik implementations. 
\index{category}
\index{isomorphic gaphs}

\begin{figure}[!htpb] 
\scalebox{0.39}{\includegraphics{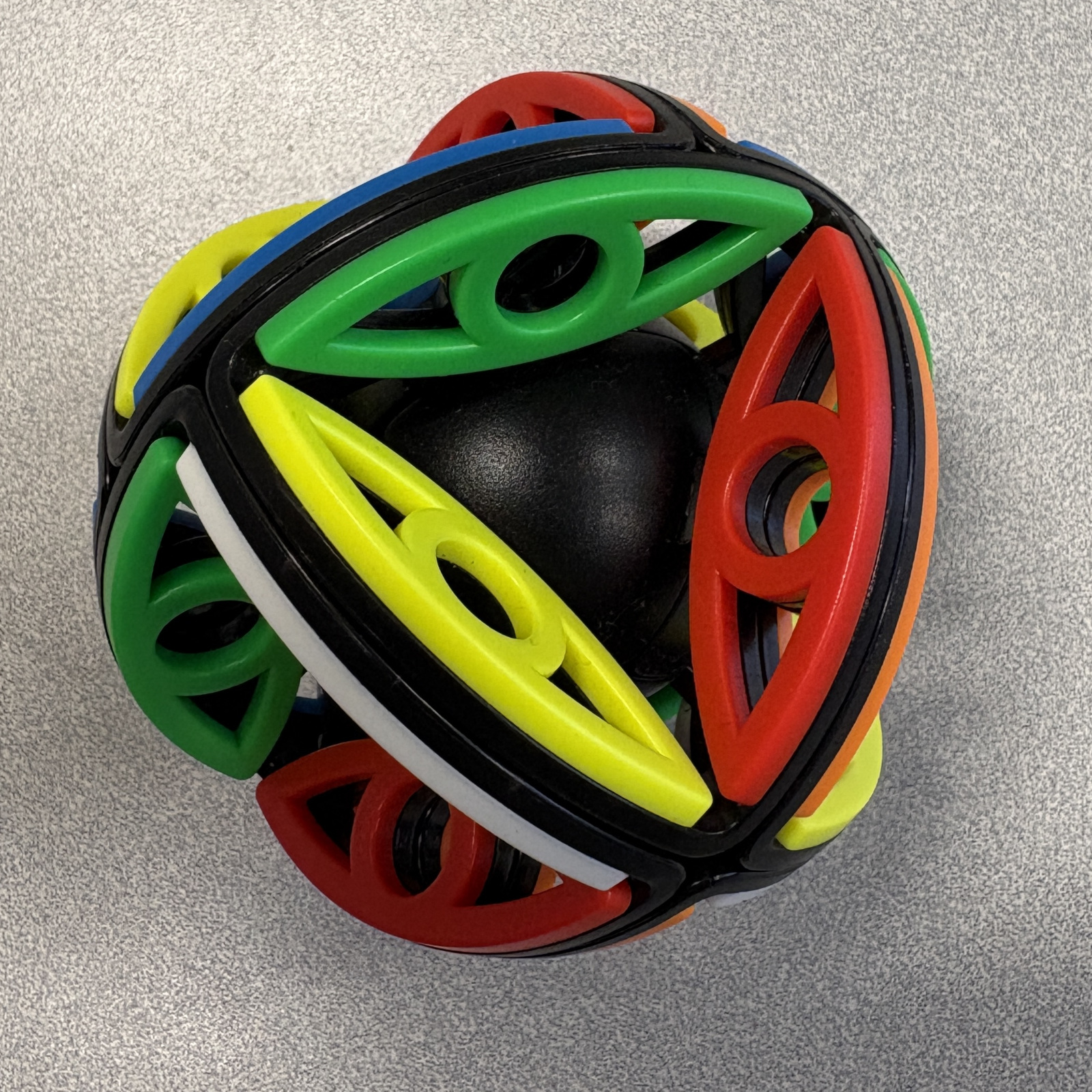}}
\scalebox{0.39}{\includegraphics{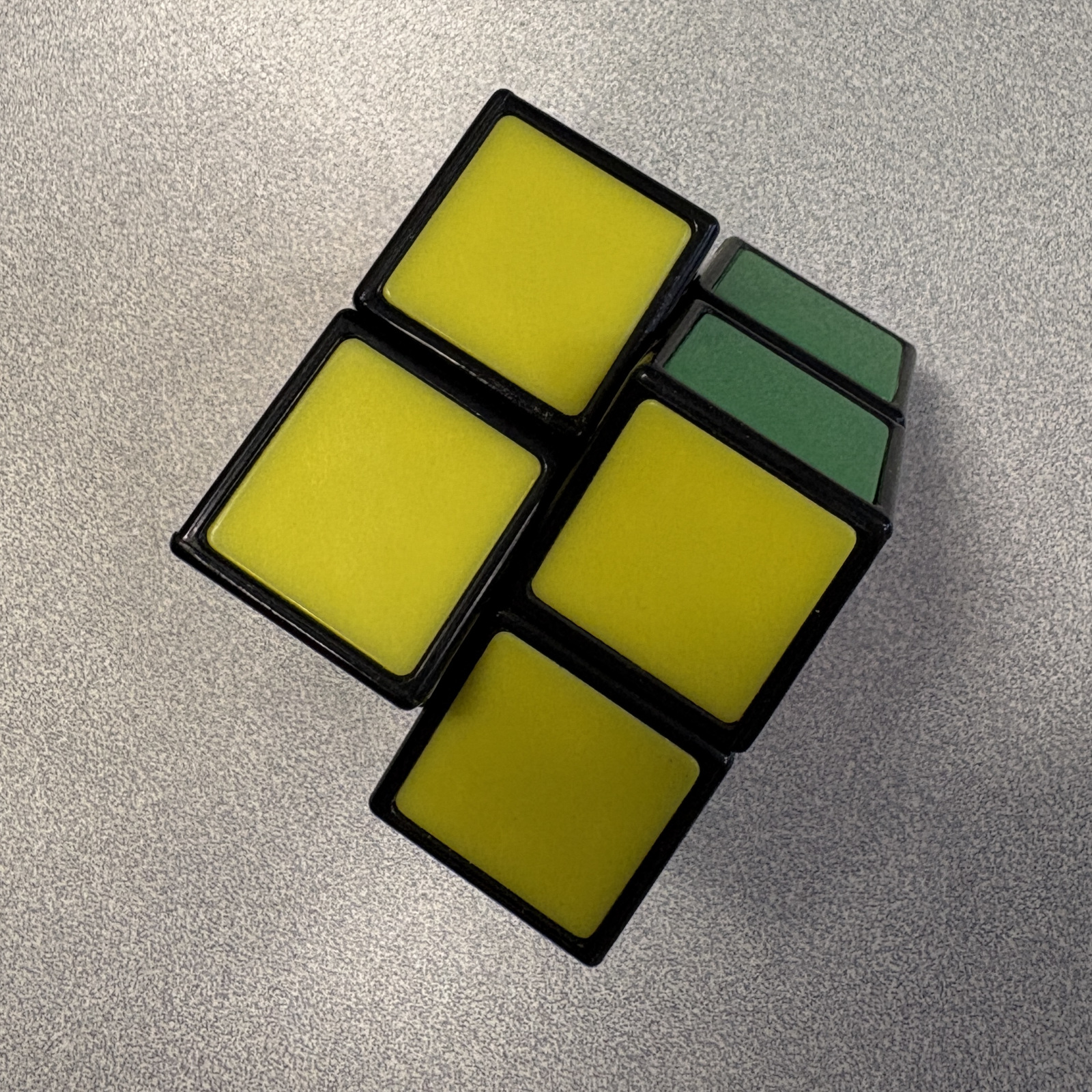}}
\label{game}
\caption{ 
Two puzzles that are isomorphic. 
}
\end{figure} 
\paragraph{}
The disjoint union of two solitary games in which $A$ in the first game is connected
to the initial condition in the second.
This is the disjoint union of the graphs. The god number is additive. 
If we add two Rubik's cubes for example, the god number to solve both of them is 
$20+20=40$. If we add two multiplayer games, the situation is more complicated. If player
$V$ decides to focus on one game, $V$ will probably win that game if the
other player focuses on the other game. An example of addition of games are
competitions where many cubes have to be solved. 

\paragraph{}
{\bf Value systems} could allow to estimate the god number.  This has been pointed out already
in \cite{Shannon1950}. They can serve as the analog of {\bf Lyapunov functions} in dynamical systems theory.
If we can find a value system which has a maximum at the equilibrium set $S$ and design a {\bf gradient method}
with respect to this function, then we have some hope that the gradient ascent will lead to a solution 
path with a decent length. 
In the Rubik cube, the value system could be the number of correct cubes.
In the chess case, it can be the sum of the values of each captured piece. 
In the 15 game it can be a distance between permutations. 
In a transposition puzzle, we have seen the graph distance $d_H(x,y)$ on $G$
which uses the distance matrix from the host graph $\Gamma$ telling which 
transpositions can be used to bring the permutation $x$ to the permutation $y$. 

\paragraph{}
We should always look also whether backtracking is efficient.
Start at the solution set A and take a few steps back to reveal part of the 
basin of attraction, positions which have to end up in A. This can be done
in the case of the {\bf peg solitaire}. 
If we can also analyze a neighborhood of $v$. If $N_n(v) \cap N_m(A)$ 
intersect we might already see the god number. 

\paragraph{}
One could also cover the entire graph by contractible graphs and
build smaller graphs in which the contractible parts are identified. 
This is at the heart of the Hanoi puzzle for example. If we glue two
disks together, we get from a Hanoi $n$ puzzle to a Hanoi $n-1$ puzzle. 
More generally, if we have finitely many points $P_1, \dots P_n$
and radii $r_i$ such that the neighborhood graphs of $N(P_i,r_i)$
intersect, then we can estimate the god number. 

\section{History}

\paragraph{}
The history of game theory starts earlier than 
von Neumann and Morgenstern \cite{NeumannMorgenstern}. A good source reference
is \cite{DimandDimand}. James Waldegrave first described minimax solutions to best 
strategies in two-person zero sum card games.
But as pointed out in the introduction to \cite{DimandDimand} Waldegrave did not 
attract attention so that the first formal reasoning for game theory are attributed
to  Cournot and Edgeworth 1897 who pioneered
{\bf oliopoly theory} to model industries with a few producers.
An important early start in mathematical game theory with 
von Neumann proved in 1928 the {\bf minimax theorem}. 
\index{Minimax theorem}
Von Neumann's {\bf minimax theorem} tells that $P^n$ is the space of probability vectors in 
$\mathbb{R^n}$ and $A$ is a $n \times m$ matrix, then the bi-linear function $f(x,y)=x \cdot Ay$ 
satisfies ${\rm max}_{x \in P^n} {\rm min}_{y \in P^m} f(x,y) =
{\rm min}_{y \in P^m} {\rm max}_{x \in P^n} f(x,y)$. Von Neumann considered simplices
like $P^n$ {\bf strategy sets} and $f(x,y)$ the {\bf expected payoff} when the 
first player plays $x$ and the second player plays $y$. It latter got generalized
to concave-convex functions, functions $f(x,y)$ that are concave in $x$ and convex in $y$. 
\index{oliopoly}

\paragraph{}
If the theory of games is seen from a finitist point of view one can pretend to be able to 
oversee the entire game graph. In this ``microcanonical perspective" probabilistic thinking is not really needed.  
When playing a traditional game like chess or
checkers, we are in a finite setting and do in principle not need probability spaces.
In a discrete finite frame work, in principle, we could do perfectly rational decisions by checking
{\bf all possible game outcomes}. While impractical from a computer science point of view,
mathematically speaking, we are on firm ground. When playing on a $4 \times 4$ chess board with 
a select number of figures, we can draw out the entire game graph. 
Game graphs are usually drawn as trees (as for example in the textbook \cite{OsborneRubinstein1994}).
Indeed, if we look at the game graph $G$ of a game, we can draw in principle build a tree
where each vertices consist of {\bf all possible simple paths starting in $v$} and two 
such paths are connected by an edge, if one is contained in the other and the length differ by $1$. 
This is called the {\bf self-avoiding walk tree} rooted at $v$. This SAW tree contains all the information
about $G$ but different graphs can produce the same tree. 
\index{microcanonical perpective}
\index{self-avoiding walk tree}
\index{SAW tree}

\paragraph{}
Kalmar \cite{Kalmar1928} already used a remarkable clarity. The 1928 article starts with:
{\it ``Ich werde mich mit solchen Spielen besch\"aftigen, welche
von zwei Spielern durch abwechselnde Z\"uge gef\"uhrt werden, deren
Wahl und Durchf\"uhrbarkeit nur von dem Entschl\"usse des am Zug
befindlichen Spielers bzw. von den Spielregeln, aber keineswegs
vom Zufall oder von der Handfertigkeit des Spielers abh\"angt,
und welche der Reihe nach Positionen herbeif\"uhren, die beiden
Spielern vollst\"andig bekannt sind. 
(Spiele von der Art der Kartenspiele werden also durch diese 
Festsetzung ausgeschlossen). Das bekannteste Beispiel bietet das 
Schachspiel; andere Beispiele: das Damenspiel, M\"uhlenspiel, Nimspiel.)"} which translates to
{\it ``I shall concern myself with those games which are played by two players 
making alternating moves, whose choice and feasibility depend only on the 
decision of the player whose turn it is, or on the rules of the game, 
but in no way on chance or on the manual dexterity of the player, and 
which successively lead to positions that are completely known to 
both players. (Games of the kind of card games are therefore excluded 
by this stipulation.) The best-known example is chess; other examples 
are checkers, Nine Men's Morris, and Nim."} Note however that a card game like 
Poker that is excluded by Kalmar is still a finite situation. There is a game
position for any possible permutation of the $52!$ permutations of a card deck. 
The game graph is big, but still finite. Poker could be described by a finite
game graph.
\index{Kalmar}

\paragraph{}
Von Neumann and Morgenstern used the idea of game tree already but the modern formal tree
language first at about the same time, notably by
Section 15.7 "Application to Chess" of \cite{NeumannMorgenstern}, the outcome of chess is
encoded in a value function taking values $1,0,-1$,
where $1$ means that $V$ (White) has a strategy to win and
$-1$ means that $W$ (Black) has a strategy to win and $0$ means that
both have a strategy such that the outcome is a tie. The mini-max principle comes in 
as this value is ${\rm max}_x {\rm min}_y H(x,y)={\rm min}_y {\rm max}_x H(x,y)$. 
The $x,y$ are however not probability vectors in this case. If we look at the axioms
\cite{NeumannMorgenstern} (included in a figure below), this is hardly used when thinking
about chess. 

\paragraph{}
Even in \cite{NeumannMorgenstern} the notion of ``strategy" is rather vague when applied to 
chess. Seeing a strategy as a subgraph of the game graph is not only more intuitive it is 
purely combinatorial. To illustrate the language comparision, 
look at section 15.1.1 in \cite{NeumannMorgenstern}:
{\it x,y in $H(x,y)$ stand for ultima analysi
not for a choice (in a move) but for the players' strategies; i.e. their entire
"theory" or "plan" concerning the game. For the moment we leave it at that.}
What does "theory" or "plan" mean? Defining "strategies" as subgraphs of
the game graph does not need to be explained. 

\paragraph{}
Non-deterministic features come in for two-player games already if both players have to make
decisions at the same time like for a {\bf prisoner dilemma} or {\bf rock-paper-scissor game},
not knowing what the other is doing. An other source of 
randomness can enter in from external parameters, like the result of a {\bf dice roll} like
in Backgammon. 
In economic situations, it occurs also in complex setups, where many parameters can only 
be estimated. Also if the game graphs are two large, a deep search through all the possible
game outcomes is impossible. 
\index{prisoner dilemma} 
\index{rock-paper-scissor game}
\index{backgammon}

\paragraph{}
The mathematics of household games and puzzles which are most popular can be dealt with 
mathematics that is discrete: graph theory or combinatorial group theory. It might not be the most
practical approach for usual games, especially if dice calls are invoked, where the game graph 
would have to include all the possible dice outcomes $\{1, \dots, 6\}^n$ when considering moves
of length $n$. In a card game like {\bf black jack} we would have to look at the finite set
of all $(k*52)!$ shufflings if a shoe of k card decks is used. Drawing out the entire {\bf game graph}
is obviously impractical already for much smaller card decks. 
\index{household games}

\paragraph{}
Zermelo's paper \cite{Zermelo1913} is considered the first mathematical analysis of strategies in 
games. The notes of Zermelo (not even 4 pages long), were not very detailed and 
Denes Koenig fixed some gaps in 1927. Zermelo for example did not restrict to finite games.
At that time, a ``strategy" specified a specific move
for each position meaning that one has not to take the maximum over all game events.
A ``quasi strategy" specifies an acceptable set of moves. 
The restriction to finite games dates back to von Neumann and Morgenstern from 1944.
We can achieve that by assuming that a game position can not be reached twice. 

\paragraph{}
The first popular group games was the 15 puzzle, a sliding puzzle that is modeled by the
group $S_{16}$ of all permutations of tiles in a $4 \times 4$  board. A parity functional
shows that the game group is the alternating group $A_{16}$. The most popular group game
still today are Rubik type puzzles where the game graph is the Cayley graph of a group. 
The natural description is the language of finitely presented groups. As the groups are 
finite and every finite group is a permutation group we can restrict to permutation 
groups. From the book \cite{Cameron}: 
"until about 1850, the term 'group' referred to a set G of transformations
of a set F, such that G is closed under composition, contains the identity
transformation, and contains the inverse of each of its elements. This is what
would now be called a `permutation group'." 
\cite{Cayley1854}.

\paragraph{}
For the axiomatic set up of Von Neumann and Morgenstern axiom system for n-player games,
see section 10.1.1 in \cite{NeumannMorgenstern}. As mentioned before, the bulk of game
theory deals with multiplayer games but this is difficult as different partitions of 
players must be considered. We cite Roth in \cite{Shapleyvalue}: 
{\it A theory of how players in a game might choose to organize themselves remains
one of the most difficult and important problems in game theory.} Shapley \cite{Shapley1952}
(who had studied at Harvard as an undergraduate) became the founder of {\bf 
cooperative game theory}.
In one of the most cited works in game theory thinks a ``game" as a set of rules with 
specified players $U$ in the playing positions and introduces then
super additive valuation $\nu$ from $2^U$ to $\mathbb{R}$ meaning 
$\nu(A) \geq \nu(A \cap B) + \nu(A \setminus B)$ with the special additive case
where $\nu$ comes from a value $\nu(A) = \sum_{i \in A} \phi(i)$ etc. 

\paragraph{}
Obviously, economists look more at situations where games have lots of players. The analysis
of ``value functions" can then become important. Shapley and Nash knew each
other as they were both young game theorists in Princeton. Nash 
developed ``non-cooperative equilibrium theory", while Shapley focused on 
{\bf cooperative games}. The umbrella was {\bf Albert Tucker} who supervised
Gale, Nash and Shapley and coined the term {\bf "Prisoner Dilemma"} in 1950
and is sometimes called the {\bf ``midwife of game theory"}. It is interesting
that Tucker got his PhD under Solomon Lefschetz working on manifolds. Lefschetz is
known among other things for his fixed point theorem generalizing the 
Brouwer fixed point theorem. Both of his academic grandsons Gale and Nash 
got involved in fixed point theorems again, Gale giving a spectacular proof of
Brouwer's fixed point theorem using game theory and Nash giving an elegant proof of the 
von Neumann minimax theorem using the Brouwer fixed point theorem, and then 
developing the concept of Nash equilibria.
\index{cooperative games}
\index{non-cooperative games}
\index{Shapley}
\index{Prisoner Dilemma}
\index{midwife of game theory}

\vfill
\pagebreak

\section{Some Code}

\paragraph{}
We see first a simple 5 line implementation of the {\bf game graph construction}. The input is 
a connected graph $\Gamma$ with $n$ vertices. The output is the {\bf game graph graph} $G$, the Cayley graph 
generated by the transpositions generated by the edges of $\Gamma$. We have mentioned the folklore
result that $G$ is $S_n$ and so has $n!$ vertices.
Our code is slightly slower than the {\bf breadth first search construction} which builds up the balls $B_r(x)$
layer by layer from a starting point $x$. We should note however that breadth first search does not necessarily 
get the graph diameter, as the center $x$ of the ball has been chosen at random. While it is true for
Cayley graphs that we always get the graph diameter with breadth first search, it is not true in general.
The graph diameter is not always ${\rm min} \{ r, B_r(x)=G \}$, where $B_r(x)$ is the ball of radius $r$.
The wheel graph $G$ for example has diameter $2$ but $B_1(x)=G$ already with radius $r=1$. 
The following construction it is almost as fast, as for connected graphs, the game graph has
always $S_n$ as vertex list. And most importantly, since we construct the game graph, we get
its diameter for certain and not just a lower bound. We observe experimentally that all cases
seen so far are Hamiltonian. 
\index{game graph construction}
\index{Hamiltonian}

\begin{tiny} 
\lstset{language=Mathematica} \lstset{frameround=fttt}
\begin{lstlisting}[frame=single]
GameGraph[gamma_]:=Module[{e,v,A,B,K},
  e=EdgeList[gamma]; e=Table[Sort[{e[[k,1]],e[[k,2]]}],{k,Length[e]}]; 
  v=VertexList[gamma]; n=Length[v]; A=Permutations[v]; K=Length[A];
  T[a_,{p_,q_}]:=Table[If[k==p,a[[q]],If[k==q,a[[p]],a[[k]]]],{k,n}];
  B={};Do[a=A[[k]]; Do[b=T[a,e[[l]]]; B=Append[B,a->b],{l,Length[e]}],{k,K}];
  UndirectedGraph[Graph[B]]];       TranspositionPuzzle=GameGraph; 

S=GraphPlot3D[GameGraph[CompleteGraph[3]]]; Export["gamegraph_k3.pdf",S,"PDF"]
S=GraphPlot3D[GameGraph[PathGraph[Range[4]]]]; Export["gamegraph_p4.pdf",S,"PDF"]
S=GraphPlot3D[GameGraph[StarGraph[4]]]; Export["gamegraph_s4.pdf",S,"PDF"]
Table[GraphDiameter[GameGraph[PathGraph[Range[k]]]],{k,1,7}]
Table[GraphDiameter[GameGraph[CycleGraph[k]]],{k,1,7}]
Table[GraphDiameter[GameGraph[WheelGraph[k]]],{k,1,7}]
Table[GraphDiameter[GameGraph[StarGraph[k]]],{k,1,7}]
GraphDiameter[GameGraph[CompleteGraph[{2,2,2}]]]        (* Gives 7, the same as GAP *)
Do[G=GameGraph[RandomGraph[{6,8}]]; Print[GraphDiameter[G]];
   Print[HamiltonianGraphQ[G]],{100}]

(* the following takes hours, look at the BFS below which is faster  *)
moebius=UndirectedGraph[Graph[{1->2,2->3,3->4,4->5,5->6,6->7,7->8,8->1,
                               1->5,5->2,2->6,6->3,3->7,7->4,4->8,8->5}]];
cylinder=UndirectedGraph[Graph[{1->2,2->3,3->4,4->1,5->6,6->7,7->8,8->5,
                                1->5,5->2,2->6,6->3,3->7,7->4,4->8,8->1}]];
GraphDiameter[GameGraph[moebius]]
GraphDiameter[GameGraph[cylinder]]
\end{lstlisting}
\end{tiny}

\paragraph{}
Here is some mathematica code which computes the lengths of the BFS layers $|S_r(v)|$
(wave front layers)  and compute it in the case of the Moebius strip and in the case of the 
Cylinder
\index{BFS layer size}
\index{wave front length}
\index{Moebius strip}
\index{cylinder}

\begin{tiny}
\lstset{language=Mathematica} \lstset{frameround=fttt}
\begin{lstlisting}[frame=single]
T[a_,X_]:=Module[{b=a}, b[[X[[1]]]]=a[[X[[2]]]]; b[[X[[2]]]]=a[[X[[1]]]]; b];
BFS[gamma_]:=Module[{e=EdgeList[gamma],v=VertexList[gamma],A,A0,A1,L={}},
  n=Length[v];A0={Range[n]};A=A0; While[Length[A0]>0,
    A1=Flatten[Table[T[A0[[k]],e[[l]]],{l,Length[e]},{k,Length[A0]}],1]; 
    A1=Complement[A1,A]; A=Union[A,A1]; A0=A1; L=Append[L,Length[A1]] ]; L];
moebius =UndirectedGraph[Graph[{1->2,2->3,3->4,4->5,5->6,6->7,7->8,8->1,
                                1->5,5->2,2->6,6->3,3->7,7->4,4->8,8->5}]];
cylinder=UndirectedGraph[Graph[{1->2,2->3,3->4,4->1,5->6,6->7,7->8,8->5,
                                1->5,5->2,2->6,6->3,3->7,7->4,4->8,8->1}]];
{BFS[moebius],BFS[cylinder]}
\end{lstlisting}
\end{tiny}

\paragraph{}
The result $BFS(moebius) = (0, 16, 137, 781, 3038, 7818, 12375, 10845, 4581, 700, 28, 0)$ and 
$BFS(cylinder) = (0, 16, 136, 784, 3104, 8008, 12480, 10606, 4383, 744, 56, 2, 0)$ confirm
that the diameter of the Cayley graphs are different. And here is the computation in GAP:

\begin{tiny}
\lstset{language=Mathematica} \lstset{frameround=fttt}
\begin{lstlisting}[frame=single]
LoadPackage("grape");  # we use the moebius strip
a:=(1,2); b:=(2,3); c:=(3,4); d:=(4,5); e:=(5,6); f:=(6,7); g:=(7,8); h:=(8,1); 
i:=(1,5); j:=(5,2); k:=(2,6); l:=(6,3); m:=(3,7); n:=(7,4); o:=(4,8); p:=(8,5);
G:= Group(a,b,c,d,e,f,g,h,i,j,k,l,m,n,o,p);
S:=GeneratorsOfGroup(G); C:=CayleyGraph(G,S);; Diameter(C);      #  10

a:=(1,2);b:=(2,3);c:=(3,4);d:=(4,1);e:=(5,6);f:=(6,7);g:=(7,8);h:=(8,5);
i:=(1,5);j:=(5,2);k:=(2,6);l:=(6,3);m:=(3,7);n:=(7,4);o:=(4,8);p:=(8,1);
G:= Group(a,b,c,d,e,f,g,h,i,j,k,l,m,n,o,p);
S:=GeneratorsOfGroup(G); C:=CayleyGraph(G,S);; Diameter(C);      #  11
\end{lstlisting}
\end{tiny}

\paragraph{}
The cycle graph case $\Gamma=C_n$ is covered by the {\bf theorem of Adin-Alon-Roichman}: \cite{AdinAlonRoichman}
giving ${\rm God}(C_n) = \langle (n^2+1)/4 \rangle$. We have given upper and lower bounds in the wheel case
(in the section about transposition games) 
The path case is the {\bf bubble sort case} with $n(n-1)/2$ diameter. Of course if 
$\Gamma_1  \subset \Gamma_2$, then the god number of $\Gamma_1$ is larger or equal than the 
god number of $\Gamma_2$. 
\index{theorem of Adin-Alon-Roichman}
\index{bubble sort}

\begin{center}
\begin{tabular}{|c|c|c|c|c|c|} \hline
Size n &  Complete &  Path     &  Cycle    & Wheel  & Star \\ \hline
3 &        2  &     3     &   2       &   2    &    3 \\
4 &        3  &     6     &   4       &   3    &    4 \\
5 &        4  &    10     &   6       &   5    &    6 \\
6 &        5  &    15     &   9       &   6    &    7 \\
7 &        6  &    21     &  12       &   9    &    9 \\
8 &        7  &    28     &  16       &  10    &   10 \\
9 &        8  &    36     &  20       &  12    &   12 \\ 
10&        9  &    45     &  25       &  13    &   13 \\
11&       10  &    55     &  30       &  15    &   15 \\ \hline
\end{tabular}
\end{center} 

\paragraph{}
We implemented the algorithm both in Mathematica as well as in the GAP software \cite{GAP}
(both in OS X and linux). Both cases computed in Cayley graphs. Here is an example
how to compute the god number. We take $\Gamma=K_{2,2,2}$, which is 
the {\bf octahedron graph}, for which the god number is $7$. The following
GAP code confirms this. We used GAP also to compute the n=10,n=11 wheel god numbers
where it took already hours in the case of $n=11$. 
Note that no formula is yet known for the wheel graph. It seems to agree with 
the star graph case for larger $n$. This means that sorting through a central
node suffices and no sorting along the circular rim is needed.  
We have also Mathematica code for transposition and 
sliding puzzles defined by any graph $\Gamma$ in the code section. For the 
{\bf magic octahedron}, the case when we have a sliding puzzle with one hole, the
god number is $7$.  Here is the GAP code for the {\bf octahedron transposition puzzle},
giving graph diameter is $7$. 
\index{GAP}
\index{Octahedron graph}
\index{octahedron sliding puzzle}
\index{magic octahedron}

\begin{tiny}
\lstset{language=Mathematica} \lstset{frameround=fttt}
\begin{lstlisting}[frame=single]
LoadPackage("grape");
a:=(1,2); b:=(2,3); c:=(3,4); d:=(4,1); e:=(1,5); f:=(2,5); 
g:=(3,5); h:=(4,5); i:=(1,6); j:=(2,6); k:=(3,6); l:=(4,6);
G:=Group(a,b,c,d,e,f,g,h,i,j,k,l);
S:=GeneratorsOfGroup(G);
C:=CayleyGraph(G,S);
Diameter(C);
\end{lstlisting}
\end{tiny}

\paragraph{}
Let us remark that if the diameter of the game graph is larger than 2, then the game graph $G$ 
obtained from $\Gamma$ with $m$ edges one can try to produce a {\bf $m$-manifold} by placing a $m$-simplex at every vertex 
of $G$ and attach at every of the $(m-1)$ faces an other m-simplex. In the case of the path graph $P_4$ with 
$4$ vertices, it is an {\bf Archimedian solid}, the {\bf great rhom-bicuboctahedron} a graph with god number $6$. 
Its dual is a {\bf Catalan solid}, the {\bf disdyakis dodecahedron} with 45 faces. It is a 2-sphere: every 
unit sphere $S_1(x)$, the boundary of $B_1(x)$ is a circular graph. 
It is not always the case that we get a manifold. An example of a 3-regular graph that is not the dual of a 2-manifold
is the $4$-M\"obius ladder (Wagner graph). 
\index{Wagner graph}
\index{Moebius ladder}
\index{Catalan solid}
\index{Archimedian solid}
\index{disdyakis dodecahedron}
\index{great rhom-bicuboctahedron}

\begin{figure}[!htpb]
\scalebox{0.49}{\includegraphics{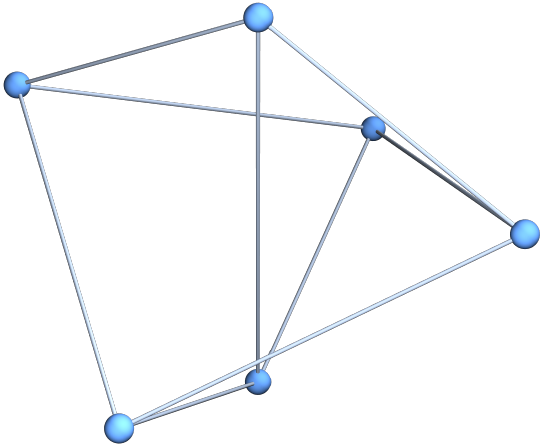}}
\scalebox{0.49}{\includegraphics{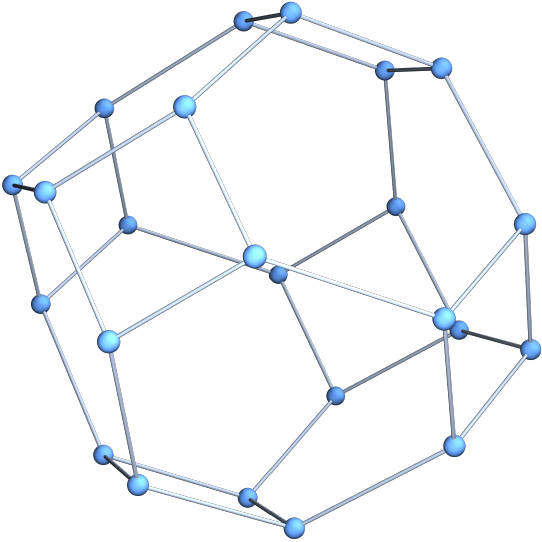}}
\scalebox{0.49}{\includegraphics{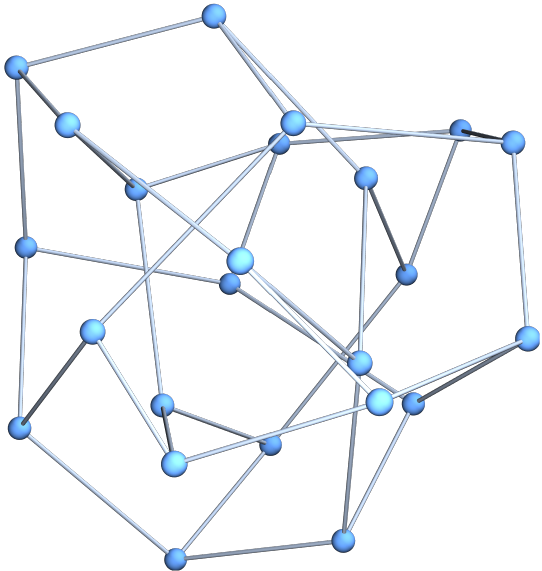}}
\label{game}
\caption{
The game graph of $K_3$ is the {\bf utility graph} $K_{3,3}$. 
The game graph of the path graph $P_4$ of length 3 is the dual of a 2-sphere (a Catalan solid in this case)
The game graph of the star graph $S_4$ is the dual of of a 2-manifold.  The pictures were
generated with the program given above. 
}
\end{figure}
\index{utility graph}
\index{path graph}

\paragraph{}
Below is GAP code for the Rubik cube. It has been used in the 80ies. 
(It was used in student projects for a mathematical software course, where one of us (O.K) was
an undergraduate course assistant \cite{KnillMaederSandersSimon}. The assignment given to the students in 
that "mathematical software" course was to write a program in CAYLEY (now MAGMA) to build a strategy that
would solve the Rubik cube from any initial condition, build on a theory of Schreier-Sims.)
\footnote{When kids got the Rubik cube in the late 1970'ies, the idea of the game was
to build your own strategy. For O.K., this had required weeks during high school. When looking and comparing
such algorithms later, it became clear that everybody (without knowing any group theory) developed independently 
some sort of {\bf Schreier-Sims algorithm}, working in a tower of {\bf stabilizer groups},
fixing larger and larger parts of the cube, then developing
generators for that group. It is interesting that kids are often better than adults in that, maybe because
they are not afraid to play around, even if they do not know what they are doing. } 
The students had to solve this homework assignment in the algebra programming language CAYLEY
(which is syntax close to GAP today and now called MAGMA). The computer would randomly have to find generators of the 
stabilizer group, until a way is found to find a larger stabilizer group etc. 
\index{Schreier-Sims algorithm}
\index{Magma}
\index{stabilizer group}

\paragraph{}
We compute here the god number of the subgroup of the Rubik cube, generated by face 
rotations by 180 degrees. This is again an example of a Cayley graph generated by involutions. 
But each generator is a product of two transpositions, not one. The god number is 15. 
The game graph has 663'552 vertices and 1'990'656 edges, girth 4. \\
As for an other example, the group generated by three generators $a^2,b^2,c^2$ has only 2592 vertices
and 3888 edges, girth 12 and diameter 14. 
The group Rubik itself produces a game graph $43252003274489856000$ that is too large for the 
machine to get the diameter for. It is 20, if one uses the generators $a^k,b^k, \dots, f^k$ for $k=1,2,3$. 

\index{girth}
\index{GAP}

\begin{tiny}
\lstset{language=gap} \lstset{frameround=fttt}
\begin{lstlisting}[frame=single]
LoadPackage("grape");
a:=( 1, 3, 8, 6)( 2, 5, 7, 4)( 9,33,25,17)(10,34,26,18)(11,35,27,19);
b:=( 9,11,16,14)(10,13,15,12)( 1,17,41,40)( 4,20,44,37)( 6,22,46,35);
c:=(17,19,24,22)(18,21,23,20)( 6,25,43,16)( 7,28,42,13)( 8,30,41,11);
d:=(25,27,32,30)(26,29,31,28)( 3,38,43,19)( 5,36,45,21)( 8,33,48,24);
e:=(33,35,40,38)(34,37,39,36)( 3, 9,46,32)( 2,12,47,29)( 1,14,48,27);
f:=(41,43,48,46)(42,45,47,44)(14,22,30,38)(15,23,31,39)(16,24,32,40);
Rubik := Group(a,b,c,d,e,f);
Size(Rubik);
G:=Group(a^2,b^2,c^2,d^2,e^2,f^2);
S:=GeneratorsOfGroup(G);
C:=CayleyGraph(G,S);
Size(UndirectedEdges(C));
Size(G);
Girth(C);
Diameter(C);
\end{lstlisting}
\end{tiny}

\paragraph{}
Here is the GAP program for the floppy:
\cite{GAP}
\index{GAP}
\index{floppy}

\begin{tiny}
\lstset{language=Mathematica} \lstset{frameround=fttt}
\begin{lstlisting}[frame=single]
# Gap code for Floppy group,  God number 13,   2304 elements 
#                 +--------------+
#                 |  1    2    3 |
#                 |  4  top    5 |
#                 |  6    7    8 |
#  +--------------+--------------+--------------+--------------+
#  |  9   10   11 | 12   13   14 | 15   16   17 | 18   19   20 | 
#  +--------------+--------------+--------------+--------------+
#                 | 21   22   23 |
#                 | 24 bottom 25 |
#                 | 26   27   28 |
#                 +--------------+
LoadPackage("grape");
a:=(1,28)(2,22)(3,26)(18,20);
b:=(6,23)(8,21)(7,22)(12,14);
c:=(1,26)(6,21)(4,24)(12,20);
d:=(3,28)(8,23)(5,25)(14,18);

G:=Group(a,b,c,d);
S:=GeneratorsOfGroup(G);
C:=CayleyGraph(G,S);
Diameter(C);
Size(G);
\end{lstlisting}
\end{tiny}

\paragraph{}
Here is a Mathematica implementation of the floppy with one cube fixed (space
symmetries are factored out):

\begin{tiny}
\lstset{language=Mathematica} \lstset{frameround=fttt}
\begin{lstlisting}[frame=single]
(* Floppy Rubik Cube implementation: from Oliver Knill 2011 for Math E 320   *)
T1[{x_,y_}]:={{x[[2]],x[[1]],x[[3]],x[[4]]},{-y[[1]],-y[[2]], y[[3]], y[[4]]}};
T2[{x_,y_}]:={{x[[1]],x[[3]],x[[2]],x[[4]]},{ y[[1]],-y[[2]],-y[[3]], y[[4]]}};
T3[{x_,y_}]:={{x[[1]],x[[2]],x[[4]],x[[3]]},{ y[[1]], y[[2]],-y[[3]],-y[[4]]}};
T4[{x_,y_}]:={{x[[4]],x[[2]],x[[3]],x[[1]]},{-y[[1]], y[[2]], y[[3]],-y[[4]]}};
T5[{x_,y_}]:={{x[[3]],x[[2]],x[[1]],x[[4]]},{-y[[1]], y[[2]],-y[[3]], y[[4]]}};
T6[{x_,y_}]:={{x[[1]],x[[4]],x[[3]],x[[2]]},{ y[[1]],-y[[2]], y[[3]],-y[[4]]}};

P=Permutations[{1,2,3,4}]; T={T1,T2,T3,T4,T5,T6};
s=Partition[Flatten[Table[{a,b,c,a*b*c},{a,-1,1,2},{b,-1,1,2},{c,-1,1,2}]],4];
v=Partition[Flatten[Table[{P[[k]],s[[l]]},{k,Length[P]},{l,Length[s]}]],8];
v=Table[Partition[v[[k]],4],{k,Length[v]}];
e=Flatten[Table[Table[v[[k]]->T[[l]][v[[k]]],{k,Length[v]}],{l,Length[T]}]];
s=UndirectedGraph[Graph[e]]; S=GraphPlot3D[s]
Print["Is Hamltonian: ",HamiltonianGraphQ[s]];
Print["God Number:   ",GraphDiameter[s]]
\end{lstlisting}
\end{tiny}

\paragraph{}
Here is the gap code for the $2 \times 2 \times 2$ Rubik (pocket cube) which 
produces a graph with $8!*3^7 = 88179840$ vertices. The diameter of the Cayley
is 14. The code needed a couple of hours to run. 

\begin{tiny}
\lstset{language=Mathematica} \lstset{frameround=fttt}
\begin{lstlisting}[frame=single]
LoadPackage("grape");
a:=(1,2,3,4)(5,17,13,9)(6,18,14,10);
b:=(5,6,7,8)(1,9,21,19)(4,12,24,18);
c:=(13,14,15,16)(2,20,22,10)(3,17,23,11);
d:=(9,10,11,12)(4,13,22,7)(3,16,21,6);
e:=(17,18,19,20)(1,8,23,14)(2,5,24,15);
f:=(21,22,23,24)(11,15,19,7)(12,16,20,8);
G:= Group(a,b,c,d,e,f);
S:=GeneratorsOfGroup(G);
C:=CayleyGraph(G,S);
Diameter(C);
Size(G);
\end{lstlisting}
\end{tiny}

\paragraph{}
And here is a GAP implementation of the masterball. It had
been advertised as a challenging puzzle because it has more
positions than Rubik. But it is actually easier as a,b,c,d 
commute and the second and third layers can not mix with the
first and fourth reflecting that the group is $S_{16} \times S_{16}$
a subgroup of $S_{32}$. 
\footnote{masterball}

\begin{tiny}
\lstset{language=Mathematica} \lstset{frameround=fttt}
\begin{lstlisting}[frame=single]
LoadPackage("grape");
a:=(1 , 2, 3, 4, 5,  6,  7,  8 );
b:=(9 ,10,11,12,13, 14, 15, 16 );
c:=(17,18,19,20,21, 22, 23, 24 );
d:=(25,26,27,28,29, 30, 31, 32 );
e:=(1,25)(9,17)(2,26)(10,18)(3,27)(11,19)(4,28)(12,20);
G:=Group(a,b,c,d,e);
S:=GeneratorsOfGroup(G);
C:=CayleyGraph(G,S);
Diameter(C);
Size(G);
\end{lstlisting}
\end{tiny}

\paragraph{}
The group above is the product of two groups of size $16! = 20922789888000$. 
The Cayley graph has diameter $9$.  
\footnote{When the rainbow master ball was released during the 1980ies, 
the company organized a contest in the Swiss town of Bern, where the puzzle had to be
solved competitively in front of a larger audience. The task was to perform a specific flip in the puzzle.
O.K. used then Cayley (now Magma) on a Sun Workstation to come up with a path that would do the flip. 
The winner of the competition and winner to a trip to the US was a Swiss farmer and cheese-maker from 
Emmental. He had the best understanding to walk around in that Cayley graph, without computer!}

\begin{tiny}
\lstset{language=Mathematica} \lstset{frameround=fttt}
\begin{lstlisting}[frame=single]
LoadPackage("grape");
a:=(1 , 2, 3, 4, 5,  6,  7,  8 );
b:=(9 ,10,11,12,13, 14, 15, 16 );
c:=(1,9)(2,10)(3,11)(4,12);
G:=Group(a,b,c);
S:=GeneratorsOfGroup(G);
C:=CayleyGraph(G,S);
Diameter(C);
Size(G);
\end{lstlisting}
\end{tiny}

\paragraph{}
The situation is interesting as small subgroups can be understood
better. If one allows only rotations by $90$ degrees one works
in the smaller group of $8!$ elements.

\begin{tiny}
\lstset{language=Mathematica} \lstset{frameround=fttt}
\begin{lstlisting}[frame=single]
LoadPackage("grape");
a:=(1 , 2, 3, 4);
b:=(5 , 6, 7, 8);
c:=(1,5)(2,6);
G:=Group(a,b,c);
S:=GeneratorsOfGroup(G);
C:=CayleyGraph(G,S);
Diameter(C);
Size(G);
\end{lstlisting}
\end{tiny}

\paragraph{}
Here is a Mathematica implementation for $n \times m$  sliding puzzles:

\begin{tiny}
\lstset{language=Mathematica} \lstset{frameround=fttt}
\begin{lstlisting}[frame=single]
SlidingPuzzleGrid[n_,m_]:=Module[{}, R=Range[n*m]; R[[n*m]]=x;
  f[X_]:=Partition[X,m]; A=Map[f,Permutations[R]]; K=Length[A];            
  T[a_,b_]:=Module[{p,q},p=Position[a,x]; q=Position[b,x]; 
      Total[Flatten[p-q]]==1 && Delete[a,p]==Delete[b,q]];
  B={}; Do[a=A[[k]]; b=A[[l]]; If[T[a,b] || 
           T[Transpose[a],Transpose[b]],B=Append[B,a->b]],{k,K},{l,K}];
  G=UndirectedGraph[Graph[A,B]]];
G=SlidingPuzzleGrid[2,3]; (* 5-puzzle, |V|=720, |E|=840 Edges. The god number is 21   *)
Print[Map[GraphDiameter,ConnectedGraphComponents[G]]];
H1=First[ConnectedGraphComponents[G]]
\end{lstlisting}
\end{tiny}

\paragraph{}
We also programmed the sliding puzzle for a general graph $\Gamma$. In this case, the ``hole" is 
the vertex with label $1$. Unfortunately, for the {\bf rainbow puzzle}, the case where $\Gamma$ is 
the icosahedron graph, the program is not strong enough yet to get us the god number. As sliding
puzzles are not Cayley graphs, we could also not just feed it to GAP. 

\begin{tiny}
\lstset{language=Mathematica} \lstset{frameround=fttt}
\begin{lstlisting}[frame=single]
SlidingPuzzleGraph[gamma_]:=Module[{e,v,A,B,K,r,s},
  e=EdgeList[gamma]; e=Table[Sort[{e[[k,1]],e[[k,2]]}],{k,Length[e]}];
  v=VertexList[gamma]; n=Length[v]; A=Permutations[v]; K=Length[A];
  T[a_,{p_,q_}]:=Table[If[k==p,a[[q]],If[k==q,a[[p]],a[[k]]]],{k,n}];
  B={};Do[a=A[[k]]; Do[{r,s}=e[[l]];
      If[a[[r]]==1 || a[[s]]==1, b=T[a,e[[l]]]; B=Append[B,a->b]],{l,Length[e]}],{k,K}];
  UndirectedGraph[Graph[B]]]; 

gamma=CompleteGraph[{2,2,2}]; G=SlidingPuzzleGraph[gamma]; 
GraphDiameter[First[ConnectedGraphComponents[G]]]        (* 11  *)
gamma=GridGraph[{2,2}]; G=SlidingPuzzleGraph[gamma]; 
GraphDiameter[First[ConnectedGraphComponents[G]]]        (*  6  *)
gamma=GridGraph[{3,2}]; G=SlidingPuzzleGraph[gamma]; 
GraphDiameter[First[ConnectedGraphComponents[G]]]        (* 21  *)
gamma=PolyhedronData["Icosahedron","Skeleton"]; 
G=SlidingPuzzleGraph[gamma]                              (* too large *)
\end{lstlisting}
\end{tiny}

\paragraph{}
Here is Mathematica code to compute the Hanoi graphs:

\begin{tiny}
\lstset{language=Mathematica} \lstset{frameround=fttt}
\begin{lstlisting}[frame=single]
check[{X_,Y_,Z_}]:=(X==Sort[X] && Y==Sort[Y] && Z==Sort[Z]);
T12[{X_,Y_,Z_}]:=If[Length[X]==0,{X,Y,Z},{Delete[X,1],Prepend[Y,X[[1]]],Z}];
T13[{X_,Y_,Z_}]:=If[Length[X]==0,{X,Y,Z},{Delete[X,1],Y,Prepend[Z,X[[1]]]}];
T21[{X_,Y_,Z_}]:=If[Length[Y]==0,{X,Y,Z},{Prepend[X,Y[[1]]],Delete[Y,1],Z}];
T23[{X_,Y_,Z_}]:=If[Length[Y]==0,{X,Y,Z},{X,Delete[Y,1],Prepend[Z,Y[[1]]]}];
T31[{X_,Y_,Z_}]:=If[Length[Z]==0,{X,Y,Z},{Prepend[X,Z[[1]]],Y,Delete[Z,1]}];
T32[{X_,Y_,Z_}]:=If[Length[Z]==0,{X,Y,Z},{X,Prepend[Y,Z[[1]]],Delete[Z,1]}];
extend[x_]:=Module[{},
 y=T12[x];If[check[y]&&Not[x==y],v=Union[v,{y}];e=Append[e,x->y]];
 y=T13[x];If[check[y]&&Not[x==y],v=Union[v,{y}];e=Append[e,x->y]];
 y=T21[x];If[check[y]&&Not[x==y],v=Union[v,{y}];e=Append[e,x->y]];
 y=T23[x];If[check[y]&&Not[x==y],v=Union[v,{y}];e=Append[e,x->y]];
 y=T31[x];If[check[y]&&Not[x==y],v=Union[v,{y}];e=Append[e,x->y]];
 y=T32[x];If[check[y]&&Not[x==y],v=Union[v,{y}];e=Append[e,x->y]];];
extendgraph:=Module[{},Do[extend[v[[i]]],{i,Length[v]}]];

HanoiGraph[n_]:=Module[{},x0={Range[n],{},{}}; v={x0}; e={};
Do[extendgraph,2^n];s=UndirectedGraph[Graph[e]];
  Print[{n,Length[VertexList[s]],Length[EdgeList[s]]}]; 
  Print[GraphDiameter[s]]; s];
s=HanoiGraph[4]
\end{lstlisting}
\end{tiny}

\paragraph{}
Here is the code which generates the game graph of the peg solitaire on a $3 \times 3$ lattice.
There is a component with maximal diameter 5. The component with one missing peg at the 
boundary has diameter 4. 

\begin{tiny}
\lstset{language=Mathematica} \lstset{frameround=fttt}
\begin{lstlisting}[frame=single]
n=3;A=Tuples[{0,1},n^2];p[x_]:=Partition[x,n];A=Map[p,A];
V[x_,y_]:=x[[1]]==y[[1]]&&x[[2]]==y[[2]]&&
    ((x[[3]]=={1,1,0} && y[[3]]=={0,0,1}) ||
     (x[[3]]=={0,1,1} && y[[3]]=={1,0,0})); 
X[x_,y_]:=V[x,y]||V[RotateRight[x,1],RotateRight[y,1]] ||
                  V[RotateRight[x,2],RotateRight[y,2]];
Y[x_,y_]:=X[Transpose[x],Transpose[y]];W[x_,y_]:=X[x,y]||Y[x,y];
B={}; Do[If[W[A[[k]],A[[l]]],
  B=Append[B,A[[k]]->A[[l]]]],{k,Length[A]},{l,Length[A]}];
G=UndirectedGraph[Graph[A,B]]
G1 = First[ConnectedGraphComponents[G]]; GraphDiameter[G1];    (* 5 *)
s=NeighborhoodGraph[G, A[[Length[A]-1]], 5]; 
S=GraphPlot[s,GraphStyle->"SmallNetwork"]
\end{lstlisting}
\end{tiny}

\pagebreak

\paragraph{}
Here is code we used to illustrate 
the $2 \times 4$ chess game with kings and queens only. It is highly inefficient
but self-contained.

{\fontsize{2}{6}\selectfont
\lstset{language=Mathematica} \lstset{frameround=fttt}
\begin{lstlisting}[frame=single]
B={{K,Q},  {0,0}, {0,0}, {k,q}};  A=Reverse[Transpose[Reverse[B]]]; q="q"; k="k"; Q="Q"; K="K";
R=MemberQ;  P=Append;

WhiteMoves[A_]:=Module[{G={}},
(* White moves without catching            *)
Do[F=A;If[R[{q,k},A[[i,j]]]&&A[[i,j+1]]==0,F[[i,j+1]]=A[[i,j]];F[[i,j]]=0; G=P[G,A->F]],{i,1,2},{j,1,3}];
Do[F=A;If[R[{q,k},A[[i,j]]]&&A[[i,j-1]]==0,F[[i,j-1]]=A[[i,j]];F[[i,j]]=0; G=P[G,A->F]],{i,1,2},{j,2,4}];
Do[F=A;If[R[{q,k},A[[i,j]]]&&A[[i+1,j]]==0,F[[i+1,j]]=A[[i,j]];F[[i,j]]=0; G=P[G,A->F]],{i,1,1},{j,1,4}];
Do[F=A;If[R[{q,k},A[[i,j]]]&&A[[i-1,j]]==0,F[[i-1,j]]=A[[i,j]];F[[i,j]]=0; G=P[G,A->F]],{i,2,2},{j,1,4}];
Do[F=A;If[R[{q,k},A[[i,j]]]&&A[[i-1,j+1]]==0,F[[i-1,j+1]]=A[[i,j]];F[[i,j]]=0; G=P[G,A->F]],{i,2,2},{j,1,3}];
Do[F=A;If[R[{q,k},A[[i,j]]]&&A[[i-1,j-1]]==0,F[[i-1,j-1]]=A[[i,j]];F[[i,j]]=0; G=P[G,A->F]],{i,2,2},{j,2,4}];
Do[F=A;If[R[{q,k},A[[i,j]]]&&A[[i+1,j+1]]==0,F[[i+1,j+1]]=A[[i,j]];F[[i,j]]=0; G=P[G,A->F]],{i,1,1},{j,1,3}];
Do[F=A;If[R[{q,k},A[[i,j]]]&&A[[i+1,j-1]]==0,F[[i+1,j-1]]=A[[i,j]];F[[i,j]]=0; G=P[G,A->F]],{i,1,1},{j,2,4}];
(* White queen moves without catching      *)
Do[F=A;If[A[[i,j]]==q&&A[[i,j+1]]==A[[i,j+2]]==0, F[[i,j+2]]=q; F[[i,j]]=0; G=P[G,A->F]],{i,1,2},{j,1,2}];
Do[F=A;If[A[[i,j]]==q&&A[[i,j-1]]==A[[i,j-2]]==0, F[[i,j-2]]=q; F[[i,j]]=0; G=P[G,A->F]],{i,1,2},{j,3,4}];
Do[F=A;If[A[[i,j]]==q&&A[[i,j+1]]==A[[i,j+2]]==A[[i,j+3]]==0, F[[i,j+3]]=q; F[[i,j]]=0; G=P[G,A->F]],{i,1,2},{j,1,1}];
Do[F=A;If[A[[i,j]]==q&&A[[i,j-1]]==A[[i,j-2]]==A[[i,j-3]]==0, F[[i,j-3]]=q; F[[i,j]]=0; G=P[G,A->F]],{i,1,2},{j,4,4}];
(* White king or queen catches straight   *)
Do[F=A;If[R[{k,q},A[[i,j]]]&&R[{Q,K},A[[i+1,j  ]]],F[[i+1,j  ]]=A[[i,j]];F[[i,j]]=0; G=P[G,A->F]],{i,1,1},{j,1,4}];
Do[F=A;If[R[{k,q},A[[i,j]]]&&R[{Q,K},A[[i-1,j  ]]],F[[i-1,j  ]]=A[[i,j]];F[[i,j]]=0; G=P[G,A->F]],{i,2,2},{j,1,4}];
Do[F=A;If[R[{k,q},A[[i,j]]]&&R[{Q,K},A[[i  ,j+1]]],F[[i  ,j+1]]=A[[i,j]];F[[i,j]]=0; G=P[G,A->F]],{i,1,1},{j,1,3}];
Do[F=A;If[R[{k,q},A[[i,j]]]&&R[{Q,K},A[[i  ,j-1]]],F[[i  ,j-1]]=A[[i,j]];F[[i,j]]=0; G=P[G,A->F]],{i,1,1},{j,2,4}];
(* White king or  queen catches diagonally *)
Do[F=A;If[R[{k,q},A[[i,j]]]&&R[{Q,K},A[[i+1,j+1]]],F[[i+1,j+1]]=A[[i,j]];F[[i,j]]=0; G=P[G,A->F]],{i,1,1},{j,1,3}];
Do[F=A;If[R[{k,q},A[[i,j]]]&&R[{Q,K},A[[i+1,j-1]]],F[[i+1,j-1]]=A[[i,j]];F[[i,j]]=0; G=P[G,A->F]],{i,1,1},{j,2,4}];
Do[F=A;If[R[{k,q},A[[i,j]]]&&R[{Q,K},A[[i-1,j+1]]],F[[i-1,j+1]]=A[[i,j]];F[[i,j]]=0; G=P[G,A->F]],{i,2,2},{j,1,3}];
Do[F=A;If[R[{k,q},A[[i,j]]]&&R[{Q,K},A[[i-1,j-1]]],F[[i-1,j-1]]=A[[i,j]];F[[i,j]]=0; G=P[G,A->F]],{i,2,2},{j,2,4}];
(* White more queen catches                *)
Do[F=A;If[A[[i,j]]==q&&A[[i,j+1]]==0&&R[{Q,K},A[[i,j+2]]],F[[i,j+2]]=q;F[[i,j]]=0; G=P[G,A->F]],{i,1,2},{j,1,2}];
Do[F=A;If[A[[i,j]]==q&&A[[i,j-1]]==0&&R[{Q,K},A[[i,j-2]]],F[[i,j-2]]=q;F[[i,j]]=0; G=P[G,A->F]],{i,1,2},{j,3,4}];
Do[F=A;If[A[[i,j]]==q&&A[[i,j+1]]==0&&A[[i,j+2]]==0&&R[{Q,K},A[[i,j+3]]],F[[i,j+3]]=q;F[[i,j]]=0;G=P[G,A->F]],{i,2},{j,1,1}];
Do[F=A;If[A[[i,j]]==q&&A[[i,j-1]]==0&&A[[i,j-2]]==0&&R[{Q,K},A[[i,j-3]]],F[[i,j-3]]=q;F[[i,j]]=0;G=P[G,A->F]],{i,2},{j,4,4}]; 
G];

BlackMoves[A_]:=Module[{G={}},
(* Black moves without catching            *)
Do[F=A;If[R[{Q,K},A[[i,j]]]&&A[[i,j+1]]==0,F[[i,j+1]]=A[[i,j]];F[[i,j]]=0; G=P[G,A->F]],{i,1,2},{j,1,3}];
Do[F=A;If[R[{Q,K},A[[i,j]]]&&A[[i,j-1]]==0,F[[i,j-1]]=A[[i,j]];F[[i,j]]=0; G=P[G,A->F]],{i,1,2},{j,2,4}];
Do[F=A;If[R[{Q,K},A[[i,j]]]&&A[[i+1,j]]==0,F[[i+1,j]]=A[[i,j]];F[[i,j]]=0; G=P[G,A->F]],{i,1,1},{j,1,4}];
Do[F=A;If[R[{Q,K},A[[i,j]]]&&A[[i-1,j]]==0,F[[i-1,j]]=A[[i,j]];F[[i,j]]=0; G=P[G,A->F]],{i,2,2},{j,1,4}];
Do[F=A;If[R[{Q,K},A[[i,j]]]&&A[[i-1,j+1]]==0,F[[i-1,j+1]]=A[[i,j]];F[[i,j]]=0; G=P[G,A->F]],{i,2,2},{j,1,3}];
Do[F=A;If[R[{Q,K},A[[i,j]]]&&A[[i-1,j-1]]==0,F[[i-1,j-1]]=A[[i,j]];F[[i,j]]=0; G=P[G,A->F]],{i,2,2},{j,2,4}];
Do[F=A;If[R[{Q,K},A[[i,j]]]&&A[[i+1,j+1]]==0,F[[i+1,j+1]]=A[[i,j]];F[[i,j]]=0; G=P[G,A->F]],{i,1,1},{j,1,3}];
Do[F=A;If[R[{Q,K},A[[i,j]]]&&A[[i+1,j-1]]==0,F[[i+1,j-1]]=A[[i,j]];F[[i,j]]=0; G=P[G,A->F]],{i,1,1},{j,2,4}];
(* Black queen moves without catching      *)
Do[F=A;If[A[[i,j]]==Q&&A[[i,j+1]]==A[[i,j+2]]==0, F[[i,j+2]]=Q; F[[i,j]]=0; G=P[G,A->F]],{i,1,2},{j,1,2}];
Do[F=A;If[A[[i,j]]==Q&&A[[i,j-1]]==A[[i,j-2]]==0, F[[i,j-2]]=Q; F[[i,j]]=0; G=P[G,A->F]],{i,1,2},{j,3,4}];
Do[F=A;If[A[[i,j]]==Q&&A[[i,j+1]]==A[[i,j+2]]==A[[i,j+3]]==0, F[[i,j+3]]=Q; F[[i,j]]=0; G=P[G,A->F]],{i,1,2},{j,1,1}];
Do[F=A;If[A[[i,j]]==Q&&A[[i,j-1]]==A[[i,j-2]]==A[[i,j-3]]==0, F[[i,j-3]]=Q; F[[i,j]]=0; G=P[G,A->F]],{i,1,2},{j,4,4}];
(* Black king or queen catches straight   *)
Do[F=A;If[R[{K,Q},A[[i,j]]]&&R[{q,k},A[[i+1,j  ]]],F[[i+1,j  ]]=A[[i,j]];F[[i,j]]=0; G=P[G,A->F]],{i,1,1},{j,1,4}];
Do[F=A;If[R[{K,Q},A[[i,j]]]&&R[{q,k},A[[i-1,j  ]]],F[[i-1,j  ]]=A[[i,j]];F[[i,j]]=0; G=P[G,A->F]],{i,2,2},{j,1,4}];
Do[F=A;If[R[{K,Q},A[[i,j]]]&&R[{q,k},A[[i  ,j+1]]],F[[i  ,j+1]]=A[[i,j]];F[[i,j]]=0; G=P[G,A->F]],{i,1,1},{j,1,3}];
Do[F=A;If[R[{K,Q},A[[i,j]]]&&R[{q,k},A[[i  ,j-1]]],F[[i  ,j-1]]=A[[i,j]];F[[i,j]]=0; G=P[G,A->F]],{i,1,1},{j,2,4}];
(* Black king or  queen catches diagonally *)
Do[F=A;If[R[{K,Q},A[[i,j]]]&&R[{q,k},A[[i+1,j+1]]],F[[i+1,j+1]]=A[[i,j]];F[[i,j]]=0; G=P[G,A->F]],{i,1,1},{j,1,3}];
Do[F=A;If[R[{K,Q},A[[i,j]]]&&R[{q,k},A[[i+1,j-1]]],F[[i+1,j-1]]=A[[i,j]];F[[i,j]]=0; G=P[G,A->F]],{i,1,1},{j,2,4}];
Do[F=A;If[R[{K,Q},A[[i,j]]]&&R[{q,k},A[[i-1,j+1]]],F[[i-1,j+1]]=A[[i,j]];F[[i,j]]=0; G=P[G,A->F]],{i,2,2},{j,1,3}];
Do[F=A;If[R[{K,Q},A[[i,j]]]&&R[{q,k},A[[i-1,j-1]]],F[[i-1,j-1]]=A[[i,j]];F[[i,j]]=0; G=P[G,A->F]],{i,2,2},{j,2,4}];
(* Black more queen catches                *)
Do[F=A;If[A[[i,j]]==Q&&A[[i,j+1]]==0&&R[{q,k},A[[i,j+2]]],F[[i,j+2]]=q;F[[i,j]]=0; G=P[G,A->F]],{i,1,2},{j,1,2}];
Do[F=A;If[A[[i,j]]==Q&&A[[i,j-1]]==0&&R[{q,k},A[[i,j-2]]],F[[i,j-2]]=q;F[[i,j]]=0; G=P[G,A->F]],{i,1,2},{j,3,4}];
Do[F=A;If[A[[i,j]]==Q&&A[[i,j+1]]==0&&A[[i,j+2]]==0&&R[{q,k},A[[i,j+3]]],F[[i,j+3]]=Q;F[[i,j]]=0;G=P[G,A->F]],{i,2},{j,1,1}];
Do[F=A;If[A[[i,j]]==Q&&A[[i,j-1]]==0&&A[[i,j-2]]==0&&R[{q,k},A[[i,j-3]]],F[[i,j-3]]=Q;F[[i,j]]=0;G=P[G,A->F]],{i,2},{j,4,4}]; 
G];

SetOptions[GraphPlot, BaseStyle -> {FontFamily -> "Times", FontSize -> 6}];
Gall={}; Vold={A}; G=WhiteMoves[A];      Gall=Union[Gall,G];s=UndirectedGraph[Graph[G]];
  Vnew=VertexList[s]; V=Complement[Vnew,Vold]; A1=A; i0=1;j0=1;A1[[i0+1,j0+1]]=A[[i0,j0]];A1[[i0,j0]]=0;
Vold=Vnew;Do[If[V[[k]]!=A1,G=BlackMoves[V[[k]]]; Gall=Union[Gall,G];s=UndirectedGraph[Graph[Gall]];
  Vnew=Union[Vnew,VertexList[s]]],{k,Length[V]}];V=Complement[Vnew,Vold];
S=GraphPlot[s,GraphStyle -> "SmallNetwork",GraphLayout -> "RadialDrawing"]
\end{lstlisting}
}

\paragraph{} And here is the BFS layer computation in the example where the layer sequence
is not concave down. 

\begin{tiny}
\lstset{language=Mathematica} \lstset{frameround=fttt}
\begin{lstlisting}[frame=single]
n=713; a=48; b=88; c=138; v=Range[n]-1; e={}; R={a,b,c,-a,-b,-c};
e=Flatten[Table[Table[ v[[k]]->Mod[v[[k]]+R[[l]],n],{l,Length[R]}],{k,Length[v]}]];
s=UndirectedGraph[Graph[e]]; v0=v[[1]]; d=GraphDiameter[s]; Print[d];  (* 10 *)
A=Table[Length[VertexList[NeighborhoodGraph[s,v0,k]]],{k,0,d}]; A=Prepend[A,0];
diff[X_]:=Table[X[[k+1]]-X[[k]],{k,Length[X]-1}]; Print[diff[A]];
\end{lstlisting}
\end{tiny}

\vfill
\pagebreak

\section{Summary of axioms}

\paragraph{}
Here is a summary of the axiomatic setup for 1-player games = solitaire games.  \\

\begin{center}
\fbox{ \parbox{17cm}{ 
1) The {\bf game graph} $G$ of a {\bf solitaire game} is a finite simple directed graph $(V,E)$.  \\
2) A distinguished vertex $v \in V$ is the {\bf initial condition} of the game.  \\
3) In the set $S=\{ x \in V, {\rm outdeg}_G(x)=0 \}$ of {\bf end positions}, we distinguish a set $A \subset S$ of {\bf win positions}. \\
4) A {\bf strategy} is a sub-graph $H$ of $G$. Let $\mathcal{H}$ be the set of strategies.\\
5) A {\bf game event} $\gamma=(v=v_0,v_1, \dots, v_k)$ in $H$ is a maximal simple path in $H$ starting in $v$. Call them $\mathcal{E}(H)$. \\
6) The length $|\gamma|$ of a game event is the number of edges in $\gamma$.  \\
7) $G$ is {\bf winnable} if there exists $H \in \mathcal{H}$ such that all game events in $H$ end in $A$. \\
8) The {\bf god number} of a winnable game is ${\rm God}(G,v,A)=d(v,A)={\rm min}_{a \in A} d(v,a)$. \\
9) The {\bf maximal play length} of a non-winnable game is $\max_{\gamma \in \mathcal{E}(G)} |\gamma|$. Define ${\rm God}(G,v,A)=\infty$ then. \\
}}
\end{center} 

\paragraph{}
Without specifying $v,A$ the corresponding global invariant is ${\rm max}_{x,y \in V, y \; {\rm reachable} \; from \; x} d(x,y)$.
The god number of the graph, its diameter is an upper bound for the god number of the game (which has additional 
structure in the form of initial condition $v$ and win positions $A$). The diameter of the game graph can
be larger than the god number of the game as the starting position $v$ can be close to the solution set $A$. 
If $v \in A$, then ${\rm God}(G)=0$. The set of game events consists now of a single path of length $|\gamma|=0$. 
In non-winnable cases, the god number is usually not evaluated. It is the maximal length of a simple path 
starting at $v$.  For a non-winnable game, the maximal play length is $(n-1)$ if and only if $G$ is 
{\bf Hamiltonian} meaning that $G$ admits a {\bf Hamiltonian path}, 
a simple path covering all vertices. The god number computation leads in general to 
{\bf NP complete problems} for solvable solitaire games, as the $n \times m$ generalization of the 15-puzzle shows.  
\index{Hamiltonian graph}

\paragraph{}
Here is the setup for 2-player games: \index{axiomatic setup} for {\bf normal play}.
In {\bf mis\`ere play}, the player who moves last loses, and $A_V=A \cap V$ would be the win positions. In that
case the last move would just not be done and $A_V = A \cap W$. But misere play could easily
be transformed to normal play. 
\index{mis\`ere play}
\index{normal play}

\begin{center}
\fbox{ \parbox{17cm}{ 
1) The {\bf game graph} is a finite simple bipartite directed graph $G=(V \cup W,E)$. Vertices $V \cup W$
   represent possible game positions. \\
2) Player $V$ plays from $V$ to $W$ and player $W$ plays from $W$ to $V$ meaning
   $E = E_V \cup E_W$ with $E_V \subset V \times W$ and  $E_W \subset W \times V$. \\
3) Denote by $S=\{ x \in V \cup W, {\rm outdeg}_G(x)=0 \}$ the set of {\bf end positions}.  \\
4) A distinguished vertex $v \in V$ is the {\bf initial condition} of the game. $v \in W$ can be arranged by switching labels. \\
5) There is a {\bf win set} $A \subset S$. Terminal positions in $A \cap W$ are wins for $V$ and terminal positions in $A \cap V$ are wins for $W$. \\
6) A {\bf game event} $\gamma$ in $H \subset G$ is a maximal simple path in $H$ that starts at $v$. \\
7) The {\bf length} of a game event is defined as $|\gamma|=|E_V \cap E(\gamma)| = | \{ 0 \leq j < k : v_j \in V, (v_j,v_{j+1}) \in E \}|$. 
   The length is measured in moves of the starting player $V$, regardless who eventually wins. \\
8) A {\bf strategy} is a sub-graph $H$ of $G$ containing $v$. The strategy $H=\{v\}$ would mean, to not play. \\
9)  A {\bf V-strategy} is a strategy $H$ with $E_W \subset E(H)$ meaning player $V$ can limit moves of $V$ but not of $W$. \\
10) A {\bf W-strategy} is a strategy $H$ with $E_V \subset E(H)$ meaning player $W$ can limit moves of $W$ but not of $V$. \\
11) {\bf G is a win for V}, if there is a V-strategy $H$ such that every game event in $H$ ends in $A \cap W$.  \\
12) {\bf G is a win for W}, if there is a W-strategy $H$ such that every game event in $H$ ends in $A \cap V$.  \\
13) {\bf G is a draw} if neither $V$ nor $W$ has a winning strategy. \\
14) If $G$ is a win for $V$, the {\bf god number} is ${\rm God}_V(G,v)=\min_{H \in \mathcal{H}_V^{win}} \max_{\gamma \in \mathcal{E}(H)} |\gamma|$. \\
15) If $G$ is a win for $W$, the {\bf god number} is ${\rm God}_W(G,v)=\min_{H \in \mathcal{H}_W^{win}} \max_{\gamma \in \mathcal{E}(H)} |\gamma|$. \\
16) If $G$ is a draw, define ${\rm God}(G,v,A)=\infty$ and define maximal play length $L_{max}(G,v)={\rm max}_{\gamma \in \mathcal{E}(G)} |\gamma|$.
}}
\end{center}

\paragraph{}
{\bf Zermelo's theorem} logically follows from the axioms: {\bf a game is either a win 
for V, a win for W or a draw}. Since a win for $V$ excludes a win for $W$ 
the fourth logical possibility that both do win can  not happen.
Indeed, if $H_V$ is a winning $V$-strategy and $H_W$ is a winning $W$-strategy,
then their intersections would be a strategy such that only game events that end both in $A \cap W$
as well as $A \cap V$, which is impossible due to the bipartite structure implying $V$ and $W$
being disjoint. One can define the combined play strategy 
$H= H_V \cap H_W$. Any maximal event in $H$ would be forced to go into two disjoint 
sets $A \cap V$ and $A \cap W$.  
{\bf Chess problems} ``mate in $k$" (with a slightly modified game rule
so that a {\bf repeated position immediately lead to a draw}) has god number $k$. 
We have assumed $v \in V$. In a chess problem ``Black to move and mate in 3", player
Black is V. Note that $|\gamma|$ always counts the number of moves of the first player $V$. 
\index{Zermelo's theorem}
\index{chess problems}
\index{mate in k moves}

\paragraph{}
There are various statements which can be deduced immediately from the axioms. Examples: \\
{\bf 1)} The set of strategies forms a $\pi$-system. This means that it a set of objects that
is closed under intersection. If $H_1,H_2$ are strategies, then $H_1 \cap H_2$
are a strategy. Also V-strategies and W-strategies form $\pi$ systems.  \\
\index{$\pi$-system}

{\bf 2)} The set of game events in $H$ naturally form a new graph that is a tree $T(H)$.
This is the {\bf self-avoiding walk tree} of $H$ rooted at $v$. This tree $T(H)$
allows to reconstruct $H$, but do not necessarily characterize $H$ as there could be 
different trees that represent the same graph. 
\index{self-avoiding walk tree}

{\bf 3)} The length function $|\gamma|$ counting the number of moves of the first player 
defines a distance notion $d(v,y) = {\rm inf}_{\gamma} |\gamma|$ of points to the initial condition $v$.
It is not a metric because $d(y,v)$ is not defined as
 $\gamma$ runs over all game events starting at $v$ and ending at $y$. One can assume $d(v,y)=\infty$ if
there is no game path from $v$ to $y$. Computer scientists usually look at balls $B_r(v)$
in BFS search and look only at $G'=B_{{\rm God}(G)}(v) \subset G$ as these are all the reachable
configurations. As this sub-graph of $G$ is again a game graph, we could just restrict
$G'$ and get the new equilibrium set $S'=S \cap G'$ and win set $A'= A \cap G'$. \\
\index{BFS search}

{\bf 4)} An other possibility is to shift the initial condition $v$ to an other starting point $w$ 
and look at the game $G'=\bigcup_r B_r(w)$ with initial condition $v'=w$ and winning set
$A'=A \cap G'$ and equilibrium set $S'=S \cap G'$. An example is to look at an initial chess position, where
only a few figures are present. A god number $k$ for a win of $V$ means then ${\rm God}=k$.  \\

\paragraph{}
We see with such examples that the setup is close to {\bf down to earth game situations} encountered
when playing traditional games or solving traditional puzzles. In order
to appreciate that, one has to look and compare with other axiomatic setups. The above graph theoretical 
axioms are {\bf down to earth axioms}. 
\index{axiomatic setup}
\index{down to each axioms}

\begin{figure}[!htpb]
\scalebox{0.59}{\includegraphics{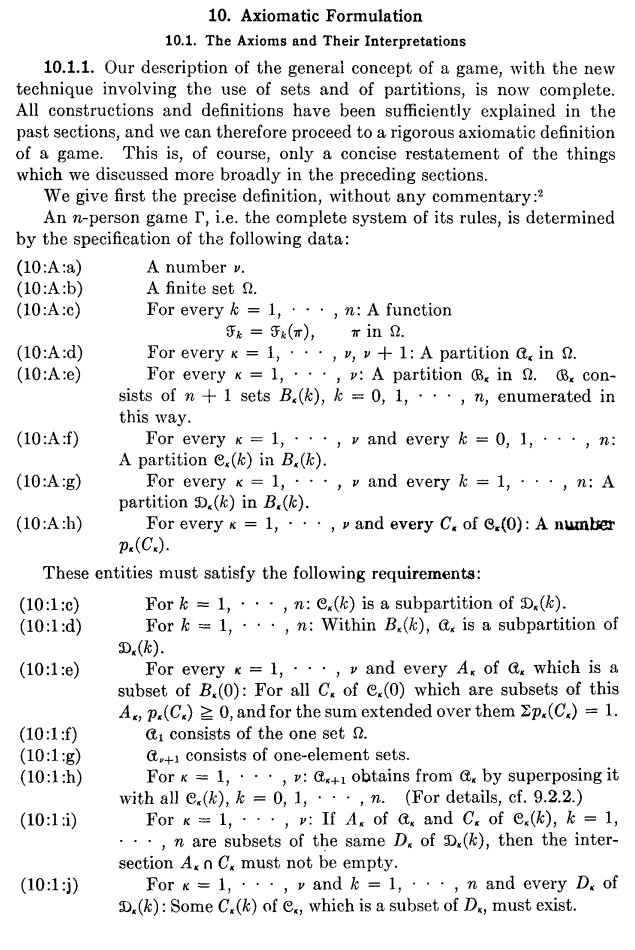}}
\label{game}
\caption{
A page of \cite{NeumannMorgenstern} listing the axioms.
}
\end{figure}
\index{von Neumann Morgenstern axioms}

\begin{figure}[!htpb]
\scalebox{0.4}{\includegraphics{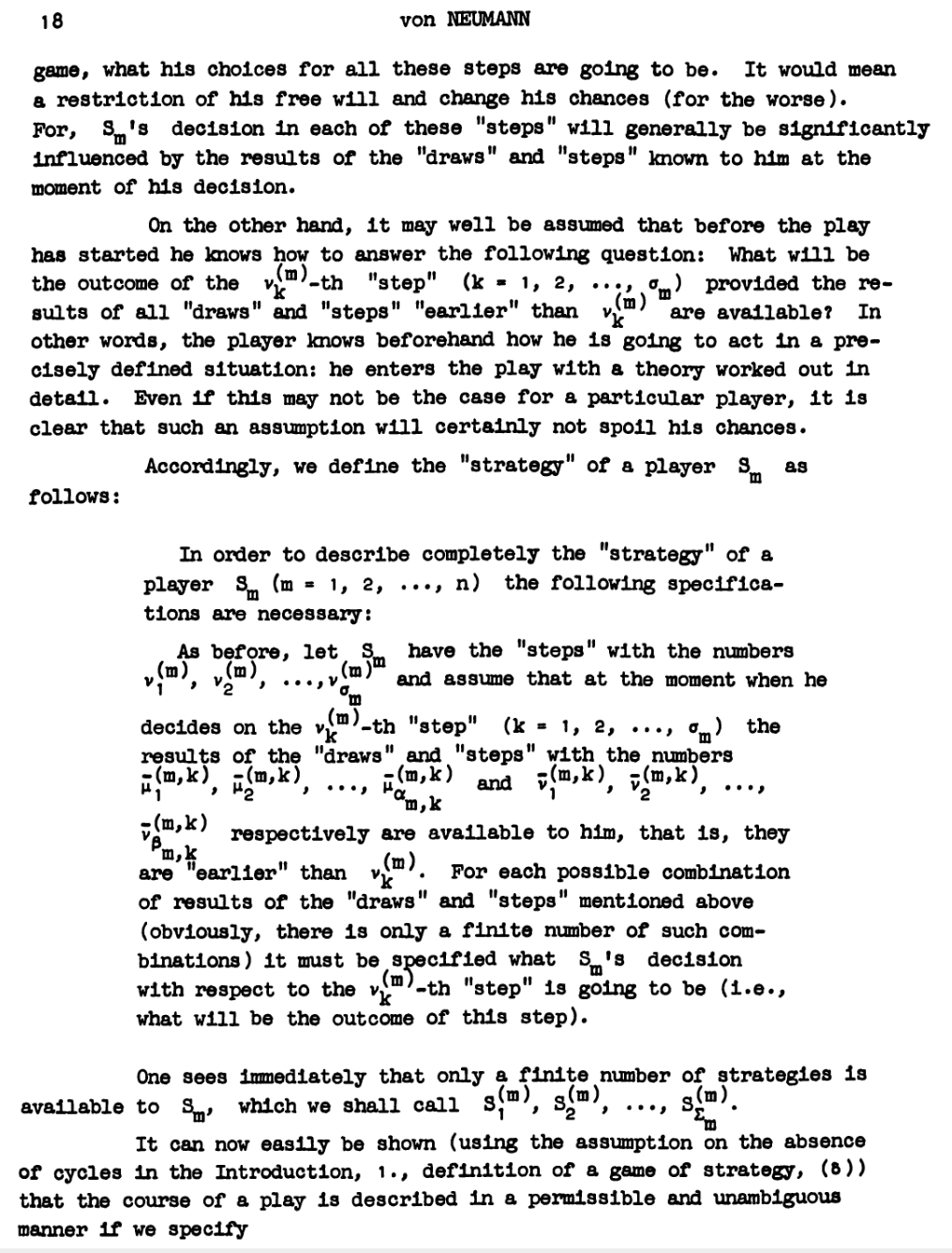}}
\label{game}
\caption{
A page of \cite{Neumann1928} defining "strategy"
}
\end{figure}
\index{strategy by von Neumann in 1928}

\vfill
\pagebreak

\section{Some questions}

\paragraph{}
The beauty about games is that the provide an endless source for problems. In general, for any 
family of games like like $n \times m \times k$ Rubik cubes or $n \times m$ tic-tac-toe games, 
one can ask asymptotic questions or extend things to 
in higher dimensions. Four dimensional Rubik cubes based on tesseracts have been implemented 
on computers in the 90'ies already. Given a puzzle(n), not only the growth rate of $God(puzzle(n))$ 
but the construction of the solution path is interesting.  
\index{tesseract rubik}
\index{higher dimensional rubik}

\paragraph{}
Problem A) Is there a closed formula for the god number for the transposition game of the
wheel graphs $W_n$? Are the god numbers of $W_n$
and $S_n$ the same for large $n$ or are there infinitely large $n$, where 
${\rm God}(S_n) = \lfloor 3n/2 \rfloor>{\rm God}(W_n)$? For the star graph, see
\cite{AkersKrishnamurthy} (where a proof sketch appears in Theorem 7.)
Unlike for path graphs $P_n$ or circular graphs $C_n$, where the god number grows quadratically with $n$, the god
number grows linearly for wheel or star graphs because the diameter of
such graphs is $2$. If a graph has diameter $O(n)$ like for $P_n$ or $S_n$, the God number is expected to grow like
$O(n^2)$.

\paragraph{}
Problem B) Given a random connected graph in Erdoes-Renyi probability space,
or a random tree obtained by some branching process. How does the God number of the transposition game graph 
or the sliding game graph behave in average? In particular, how does the 
expected God number behave in the limit $n \to \infty$? The answer will depend
on the parameters of the probability models.
\index{random puzzles}

\paragraph{}
Problem C) Sliding or transposition problems look interesting on cylinder $C_n \times P_m$
or torus $C_n \times C_n$. One can play such puzzles on any discrete manifold. We have seen that the
god number different on a M\"obius strip or a cylinder even so the Cayley graphs $G_1,G_2$ have the same vertex
degree and same size. It would be interesting to study the topological properties of these graphs. They
have the same size but different diameter. 
\index{cylinder puzzle}
\index{torus puzzle}
\index{M\"obius puzzle}

\paragraph{}
Problem D) It would be nice to have more examples of non-square $n \times m$ chess boards
set-ups where the game graph has a reasonable size.
There are infinitely many variants one can consider like on 3D cubes
or on different manifolds such as tori. If we take a standard chess and move each army
of 16 pieces one step forward, it can be played on a torus and is mate in 1. There are
likely also other games situations with unexpected outcome. Take a standard chess for
example and play it on the Moebius strip where left and right part are identified in an
opposite way. White can as a first move immediately capture a rook, but then also black 
can do that. What happens on a $8 \times 4$ board with the standard set-up with the middle
4 rows taken away so that all figures are covered. Now play this on the Moebius strip. 

\paragraph{}
Problem E) Mankala, where each player has exactly $m$ stones in each of the $n=6$ pits has a 
chance to be analyzed for small $n$ and $m$. For $n=1$ and $2$, the first player obviously wins. 
For a larger number of pits, unexpected strategies might occur due to the rule that 
if one has a pit with a lot of stones so that one can go around the entire game and land in 
an empty of its own pits, one gets all the stones from the opponent. For small $n$, one can 
draw out the game graphs. 

\paragraph{}
Problem F) What happens with the 4 peg tower of Hanoi (Reeves puzzle)
in the limit $n \to \infty$. Does the shape also converge to a fractal
object if realized suitably in $\mathbb{R}^3$, similarly as the Hanoi
graph converges to a Sirpinski fractal in $\mathbb{R}^2$. 
In the Hanoi case, the geodesic is the boundary line when embedded as the
Sirpinski fractal. What are the geodesics in the Reeves puzzle? 
\index{Sirpinski fractal}

\paragraph{}
Problem G) For transposition puzzles, there is the interesting question about the
size of the {\bf antipodal set}, the set of points which have maximal
distance to some vertex. In any Cayley graph, this is independent of the 
vertex. In general the {\bf BFS layer profile} is of interest. This is the length 
of the wave front starting at $v$. This layer profile is most of the time
concave down (unimodal) but we have seen exceptions. What BFS layer profiles can occur?
A concrete question to ask is whether the BFS layer profile determines
the graph $\Gamma$ in a transposition graph set-up or whether there are non-isomorphic
graphs $\Gamma$ with $n$ vertices such that their BFS layer profiles are the same. 
\index{antipodal set}
\index{unimodal}

\printindex

\bibliographystyle{plain}

\end{document}